\newtheorem{thm}{Theorem}[section]
\newtheorem{rem}[thm]{Remark}
\newtheorem{prop}[thm]{Proposition}
\newtheorem{note}[thm]{Note}
\newtheorem{lem}[thm]{Lemma}
\newtheorem{cor}[thm]{Corollary}
\newtheorem*{claim}{Claim}
\newtheorem*{lemma}{Lemma}
\newtheorem*{theorem}{Theorem}
\newtheorem*{thmm}{Main Theorem}
\newtheorem*{ques}{Question}
\newcommand{\F}{\mathbb{F}}
\newcommand{\fl}{\mathfrak{l}}
\newcommand{\fa}{\mathfrak{a}}
\newcommand{\fp}{\mathfrak{p}}
\newcommand{\Gal}{\mathrm{Gal}}
\newcommand{\GL}{\mathrm{GL}}
\title{Surjectivity of the adelic Galois Representation associated to a Drinfeld module of prime rank}
\author{Chien-Hua Chen}
\begin{document}

\maketitle

\section*{Abstract}
In this paper, let $\phi$ be the Drinfeld module over $\mathbb{F}_{q}(T)$ of prime rank $r$ defined by $$\phi_T=T+\tau^{r-1}+T^{q-1}\tau^r.$$ We prove that under certain condition on $\F_q$, the adelic Galois representation 
$${\rho}_{\phi}:{\rm{Gal}}(\mathbb{F}_q(T)^{{\rm{sep}}}/\mathbb{F}_q(T))\longrightarrow \varprojlim_{\mathfrak{a}}{\rm{Aut}}(\phi[\mathfrak{a}])\cong {\rm{GL_r}}(\widehat{A})$$
is surjective. 
\section{Introduction}

In \cite{Ser72}, Serre proved his famous ``Open Image Theorem" for elliptic curves over number field without complex multiplication. Restricted to elliptic curves over $\mathbb{Q}$, the theorem says

\begin{theorem}[\cite{Ser72}]
  If $E$ is an elliptic curve over $\mathbb{Q}$ without complex multiplication, then its associated adelic Galois representation
    $$\rho_E:{\rm{Gal}}(\bar{\mathbb{Q}}/\mathbb{Q})\rightarrow \varprojlim_{{m}}{\rm{Aut}}(E[m])\cong{\rm{GL}}_2(\hat{\mathbb{Z}})$$
    has open image in ${\rm{GL}}_2(\hat{\mathbb{Z}})$   
\end{theorem}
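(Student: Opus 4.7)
The plan is to decompose the adelic representation into its $\ell$-adic components via the isomorphism $\GL_2(\hat{\mathbb{Z}}) \cong \prod_\ell \GL_2(\mathbb{Z}_\ell)$, and to reduce openness of the image of $\rho_E$ to three separate statements: (i) for every prime $\ell$, the image of the $\ell$-adic representation $\rho_{E,\ell}$ is open in $\GL_2(\mathbb{Z}_\ell)$; (ii) for all but finitely many $\ell$, the mod-$\ell$ representation $\bar{\rho}_{E,\ell}$ is surjective onto $\GL_2(\F_\ell)$; and (iii) a Goursat-type lemma showing that the image in the restricted product is as large as the local constraints allow. The openness conclusion will then follow from assembling these three ingredients.

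For step (i), I would work inside the $\ell$-adic Lie group $\GL_2(\mathbb{Z}_\ell)$. Since the image $G_\ell$ is closed, it is an $\ell$-adic Lie subgroup with Lie algebra $\mathfrak{g}_\ell \subseteq \mathfrak{gl}_2(\mathbb{Q}_\ell)$, and openness is equivalent to $\mathfrak{g}_\ell = \mathfrak{gl}_2(\mathbb{Q}_\ell)$. Three inputs combine to force this: the determinant is (up to finite index) the $\ell$-adic cyclotomic character, so $\mathfrak{g}_\ell$ surjects onto the center; at a prime of multiplicative reduction, inertia acts on the Tate module by a non-trivial unipotent matrix, supplying a nonzero nilpotent element of $\mathfrak{g}_\ell$; and the non-CM hypothesis together with Faltings/Tate's isogeny theorem ensures absolute irreducibility of the rational Tate module. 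An elementary classification of Lie subalgebras of $\mathfrak{gl}_2$ (containing the center, containing a nonzero nilpotent, and not contained in a Borel) then forces $\mathfrak{g}_\ell = \mathfrak{gl}_2(\mathbb{Q}_\ell)$.

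For step (ii), I would invoke Dickson's classification: any proper subgroup of $\GL_2(\F_\ell)$, $\ell \geq 5$, is (up to conjugation) contained in a Borel, in the normalizer of a split or non-split Cartan, or has projective image isomorphic to $A_4$, $S_4$, or $A_5$. The exceptional projective cases are eliminated for large $\ell$ by order estimates combined with the surjectivity of the determinant $\chi_\ell$. The Borel case corresponds to the existence of a $\mathbb{Q}$-rational cyclic $\ell$-isogeny on $E$; by Mazur's theorem on isogenies of prime degree over $\mathbb{Q}$, this occurs only for $\ell$ in a finite list. The Cartan cases correspond to non-cuspidal, non-CM rational points on the modular curves $X_{\mathrm{sp}}^+(\ell)$ and $X_{\mathrm{ns}}^+(\ell)$; one rules these out for almost all $\ell$ via inertia analysis at $\ell$, Chebotarev, and finiteness results for rational points on these curves.

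The main obstacle is step (ii), and within it the Borel and non-split Cartan subcases: the Borel case relies on Mazur's deep classification of rational isogenies, and the non-split Cartan case requires controlling rational points on modular curves whose Jacobians can have positive Mordell--Weil rank. Once (i) and (ii) are established, step (iii) follows from a Goursat-style argument exploiting the simplicity of $\mathrm{PSL}_2(\F_\ell)$ for $\ell \geq 5$: any nontrivial entanglement between distinct $\ell$-adic factors would induce an isomorphism between nonabelian simple quotients of different orders, which is impossible except at finitely many primes; those finitely many "bad" primes are absorbed by passing to an open subgroup of finite index, yielding the desired openness of $\rho_E\bigl(\Gal(\bar{\mathbb{Q}}/\mathbb{Q})\bigr)$ in $\GL_2(\hat{\mathbb{Z}})$.
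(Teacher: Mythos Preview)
The paper does not prove this theorem at all: it is quoted in the introduction as a result of Serre \cite{Ser72}, purely to motivate the Drinfeld-module analogue that is the actual subject of the paper. There is therefore no proof in the paper to compare your proposal against.

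That said, your sketch has a genuine gap and some anachronisms worth flagging. In step~(i) you produce a nonzero nilpotent in $\mathfrak{g}_\ell$ from inertia at a prime of multiplicative reduction, but a non-CM elliptic curve over $\mathbb{Q}$ need not have any such prime: any curve whose $j$-invariant is an integer not among the thirteen CM values has potentially good reduction everywhere and no CM, so no multiplicative prime is available and your Lie-algebra argument collapses. Serre's original treatment of the $\ell$-adic step does not rely on this and instead passes through a finer analysis of the possible Lie algebras together with abelian $\ell$-adic representations. Separately, invoking Mazur's isogeny theorem (1978) for the Borel case and Faltings' isogeny theorem (1983) for irreducibility is anachronistic for a 1972 result; Serre handles the Borel case for large $\ell$ by constraining the ramification of the putative isogeny character directly, and the irreducibility of $V_\ell(E)$ in the non-CM case was already available from his own earlier work on abelian $\ell$-adic representations.
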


The following is then a natural question:
\begin{ques}\label{ellq1}
 Is it possible to have an elliptic curve $E$ over $\mathbb{Q}$ such that $\rho_E(\Gal(\bar{\mathbb{Q}}/\mathbb{Q}))={\rm{GL}}_2(\hat{\mathbb{Z}})$?
\end{ques}

Serre \cite{Ser72} showed that the answer to the above question is negative, the main reason being the existence of a non-trivial relation between the $2$-torsion $E[2](\bar{\mathbb{Q}})$ and some $m$-torsion $E[m](\bar{\mathbb{Q}})$. Although Serre proved the adelic surjectivity problem is negative for elliptic curves over $\mathbb{Q}$, Greicious \cite{Gre10} proved it is true for elliptic curves over a ``good enough'' number field. 
 Several mathematicians studied the generalization of adelic surjectivity problem for abelian varieties of dimension $\geqslant 2$ over a number field, cf. \cite{AnDo20}, \cite{Ha11}, \cite{Zy15}, and \cite{LSTX20}. Currently, it is known that there is a genus $3$ hyperelliptic curve over $\mathbb{Q}$ whose Jacobian variety satisfies the adelic surjectivity. Not much is known for adelic surjectivity of higher dimensional abelian varieties. 

Let $A=\F_q[T]$, $F=\F_q(T)$ and $\hat{A}=\varprojlim_{\mathfrak{a}} A/\mathfrak{a}$.  Pink and R\"utsche \cite{PR09}  proved the Drinfeld module analogue of Serre's open image theorem:
 \begin{theorem}[\cite{PR09}]
Let $\phi$ be a Drinfeld $A$-module over $F$ of rank $r$ without complex multiplication, then its associated adelic Galois representation
$$\rho_\phi:{\rm{Gal}}(F^{\rm{sep}}/F)\rightarrow \varprojlim_{\mathfrak{a}}{\rm{Aut}}(\phi[\mathfrak{a}])\cong{\rm{GL}}_r(\widehat{A})$$
    has open image in ${\rm{GL}}_r(\hat{A})$
\end{theorem}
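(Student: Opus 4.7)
The plan is to prove openness first for the individual $\fp$-adic representations and then to assemble these local pieces into adelic openness via a Goursat-type argument, in parallel with the Serre--Ribet strategy for abelian varieties.

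\textbf{Step 1 (local $\fp$-adic openness).} Fix a prime $\fp$ of $A$, and form the rational $\fp$-adic Tate module $V_\fp(\phi) = T_\fp(\phi) \otimes_{A_\fp} F_\fp$, a free $F_\fp$-module of rank $r$. The image $G_\fp$ of $\Gal(F^\sep/F)$ in $\GL_r(A_\fp)$ is a closed subgroup, hence a $\fp$-adic analytic Lie group with some Lie algebra $\mathfrak{g}_\fp \subseteq \mathfrak{gl}_r(F_\fp)$. I would invoke Taguchi's Drinfeld-module analogue of the Tate isogeny conjecture, which gives $\mathrm{End}(\phi) \otimes_A F_\fp \cong \mathrm{End}_{\mathfrak{g}_\fp}(V_\fp(\phi))$. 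Since $\phi$ has no complex multiplication, $\mathrm{End}(\phi)=A$, so $\mathfrak{g}_\fp$ acts absolutely irreducibly on $V_\fp$. One then appeals to Pink's classification of algebraic monodromy groups attached to Drinfeld modules: absolute irreducibility forces the algebraic envelope to be reductive, and the characteristic polynomials of Frobenius at primes $\fl$ of good reduction (providing a Zariski-dense set of semisimple elements with traces in $A$) rule out all proper irreducible reductive subalgebras, forcing $\mathfrak{g}_\fp = \mathfrak{gl}_r(F_\fp)$. Standard $\fp$-adic Lie theory then gives that $G_\fp$ is open in $\GL_r(A_\fp)$.

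\textbf{Step 2 (adelic assembly).} Let $G \subseteq \GL_r(\widehat{A}) = \prod_\fp \GL_r(A_\fp)$ denote the global image. Step 1 ensures each projection $G \to \GL_r(A_\fp)$ is open; for all but finitely many $\fp$, the Lie-algebra calculation combined with the determinant data supplied by $\wedge^r \phi$ (whose associated rank-one Galois representation has essentially the full image in $\widehat{A}^\times$) upgrades openness to the statement that the mod-$\fp$ image equals $\GL_r(\F_\fp)$. Restricted to this cofinite set $S$ of good primes, a Goursat argument — exploiting the simplicity of $\mathrm{PSL}_r(\F_\fp)$ for $|\F_\fp|$ sufficiently large and the fact that the groups $\mathrm{PSL}_r(\F_\fp)$ are pairwise non-isomorphic as $\fp$ varies — shows that the product projection $G \to \prod_{\fp \in S} \GL_r(\F_\fp)$ is surjective. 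Combining this with $\fp$-adic openness at every prime (including the finitely many bad ones) and a closure/commutator argument on the congruence kernels yields that $G$ is open in $\GL_r(\widehat{A})$.

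\textbf{Main obstacle.} The technical heart is the Lie-algebra identification in Step 1. It rests on Taguchi's Tate isogeny theorem together with Pink's classification of algebraic monodromy groups for Drinfeld modules, which in turn requires the Anderson $t$-motive formalism, Hodge--Pink structures, and a careful analysis of Newton polygons and inertia at primes of bad reduction. Eliminating the proper reductive subalgebras of $\mathfrak{gl}_r$ that act irreducibly — especially those of orthogonal or symplectic type — depends crucially on the no-CM hypothesis and on the Zariski-density of Frobenius traces. By contrast, the adelic assembly in Step 2 is largely formal once the $\fp$-adic statement is in hand.
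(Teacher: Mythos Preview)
The paper does not prove this theorem; it is quoted from \cite{PR09} as background, so there is no ``paper's own proof'' to compare against. That said, your sketch has a genuine gap worth flagging.

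Your Step~1 treats the image $G_\fp\subseteq\GL_r(A_\fp)$ as ``a $\fp$-adic analytic Lie group with some Lie algebra $\mathfrak{g}_\fp\subseteq\mathfrak{gl}_r(F_\fp)$'' and then invokes ``standard $\fp$-adic Lie theory'' to pass from $\mathfrak{g}_\fp=\mathfrak{gl}_r(F_\fp)$ to openness of $G_\fp$. This is the Serre--Ribet template over number fields, but it does not transplant to equal characteristic: $A_\fp$ is a local field of characteristic $p$, the exponential/logarithm correspondence fails, and a closed subgroup of $\GL_r(A_\fp)$ need not be an analytic Lie group in any sense that lets you read off openness from a Lie algebra. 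So the bridge from ``absolutely irreducible commutant'' (which Taguchi's Tate-conjecture result does give you) to ``open image'' cannot go through analytic Lie theory as you have written it.

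What Pink and R\"utsche actually do is replace the Lie-algebra step by a finite-group-theoretic one. Pink's earlier work identifies the \emph{algebraic} monodromy group (the Zariski closure of the image) as $\GL_r$; Pink--R\"utsche then argue directly with the residual images $\bar\rho_{\phi,\fl}(G_F)\subseteq\GL_r(\F_\fl)$, using the structure of the tame inertia at $\fl$ together with Larsen--Pink-type bounds on finite linear groups to force $\bar\rho_{\phi,\fl}(G_F)\supseteq\mathrm{SL}_r(\F_\fl)$ for almost all $\fl$, and a pro-$p$ Frattini argument on the congruence filtration (rather than Lie theory) to lift this to $\fl$-adic openness. Your Step~2 Goursat assembly is then essentially correct in spirit. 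The ingredients you name (Taguchi, Pink's monodromy classification, simplicity of $\mathrm{PSL}_r$, Goursat) are the right ones; it is specifically the analytic Lie-algebra mechanism in Step~1 that must be replaced.
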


Therefore, it is reasonable to study the adelic surjectivity problem for Drinfeld modules over $F$ of rank $r$. The answer for rank $r=1$ is positive, which follows from Hayes' work \cite{Hay74} on the function field analogue of class field theory. More precisely, the Carlitz module $C_T=T+\tau$ has surjective adelic Galois image. Moreover, the answer for rank $r=2$ is also positive, assuming that $q=p^e\geqslant5$ is an odd prime power. This was proved by  Zywina \cite{Zy11} for the rank-$2$ Drinfeld module $\phi_T=T+\tau-T^{q-1}\tau^2$. Remark that for the case $q=2$ and $r=2$, the author \cite{Ch21} proved there is no rank-$2$ Drinfeld $A$-module over $F$ with surjective adelic Galois representation due to a similar reason as Serre's arguments for elliptic curves over $\mathbb{Q}$. Therefore, some restrictions on $q$ is necessary.

In \cite{Ch20}, we have proved the adelic sujectivity of Galois representation associated to a Drinfeld module of rank $3$ defined by $\varphi_T=T+\tau^{2}+T^{q-1}\tau^{3}$. Thus for the case of Drinfeld modules of rank $r \geqslant 4$, one may expect the same result will hold for Drinfeld module $\phi$ over $F$ defined by $\phi_T=T+\tau^{r-1}+(-1)^{r-1}T^{q-1}\tau^{r}$. Our main result is the following theorem that shows the expectation is true for prime rank Drinfeld modules:

\begin{thmm}
Let $q=p^e$ be a prime power, $A=\mathbb{F}_q[T]$, and $F=\mathbb{F}_q(T)$. Assume $r\geqslant 3$ is a prime number and $q\equiv 1 \mod r$, there is a constant $c=c(r)\in \mathbb{N}$ depending only on $r$ such that for $p>c(r)$ the following statement is true ${\rm{:}}$

Let $\phi$ be a Drinfeld $A$-module over $F$ defined by $\phi_T=T+\tau^{r-1}+T^{q-1}\tau^r$. Then the adelic Galois representation 
$${\rho}_{\phi}:{\rm{Gal}}(\mathbb{F}_q(T)^{{\rm{sep}}}/\mathbb{F}_q(T))\longrightarrow \varprojlim_{\mathfrak{a}}{\rm{Aut}}(\phi[\mathfrak{a}])\cong {\rm{GL_r}}(\widehat{A})$$
 is surjective.
\end{thmm}

The general idea comes from the proof for $r=3$. However, when we try to adapt the strategy of the proof for $r=3$ to $r\geqslant 3$, several technical problems arise. To explain these problems we briefly recall the idea of the proof for $r=3$. The problems happened when we try to prove the mod $\mathfrak{l}$ Galois representations are irreducible and surjective for all prime ideals $\mathfrak{l}$ of $A$. 

To prove irreducibility in the rank-3 case, we aim for a contradiction by assuming the mod $\mathfrak{l}$ representation $\bar{\rho}_{\varphi,\mathfrak{l}}$ is reducible. Hence the characteristic polynomial $\bar{P}_{\varphi,\mathfrak{p}}(x)\in\mathbb{F}_\mathfrak{l}[x]$ of Frobenius elements $\bar{\rho}_{\varphi,\mathfrak{l}}({\rm{Frob}}_{\mathfrak{p}})$ must contain a linear factor for all prime $\mathfrak{p}\neq (T)\ {\rm{or}}\ \mathfrak{l}$. Since the degree is $3$, we can explicitly write down a linear factor of $\bar{P}_{\varphi,\mathfrak{p}}(x)$ when $\deg_T(\mathfrak{p})=1$. Furthermore, we can also determine the characteristic polynomials concretely. Hence a contradiction can be deduced by comparing coefficients of $\bar{P}_{\varphi,\mathfrak{p}}(x)$ and its factorization. In the case $r\geqslant 4$, the assumption of reducibility does not imply the characteristic polynomial of a Frobenius element would contain a linear factor.

Following the strategy for the rank-3 case, one might approach the problem of surjectivity by contradiction. We assume the image ${\rm{Im}}\bar{\rho}_{\varphi,\mathfrak{l}}(G_F)$ is a proper subgroup of ${\rm{GL}}_r(\mathbb{F}_{\mathfrak{l}})$. Thus ${\rm{Im}}\bar{\rho}_{\varphi,\mathfrak{l}}(G_F)$ must be contained in some maximal subgroup of ${\rm{GL}}_r(\mathbb{F}_{\mathfrak{l}})$. By our knowledge of ${\rm{Im}}\bar{\rho}_{\varphi,\mathfrak{l}}(I_T)$ (see Lemma \ref{lem22}), we can see that $|{\rm{Im}}\bar{\rho}_{\varphi,\mathfrak{l}}(G_F)|$ is divisible by certain power of $|A/\mathfrak{l}|$. Hence we can use this property to rule out possible maximal subgroups. In rank-$3$ case, we can rule out all possible maximal subgroups of ${\rm{GL}}_3(\mathbb{F}_{\mathfrak{l}})$. As the rank increases, we have some maximal subgroups that can not be ruled out by merely considering their sizes. The reason is that the growth rate of $|{\rm{Im}}\bar{\rho}_{\varphi,\mathfrak{l}}(I_T)|$ is much slower than the growth rate of $p$-power component of $|{\rm{GL}}_r(\mathbb{F}_{\mathfrak{l}})|$ when $r$ increases. 

Another harder problem is the classification of maximal subgroups in ${\rm{GL}}_r(\mathbb{F}_{\mathfrak{l}})$. From \cite{BHR13} Theorem 2.2.19, we can see that those maximal subgroups are divided into $9$ classes in Aschbacher's theorem. The first $8$ classes (geometric classes) have general description, but the ninth class (special class) doesn't have a known description for arbitrary $r$ so far. (This is also the reason why the authors could only describe all the maximal subgroups for low dimensional finite classical groups.)

Fortunately, if we restrict to the case where $r$ is a prime number, then we can combine Pink's work (see \cite{PR09}, section 3) on surjectivity of mod $\mathfrak{l}$ representations  
$$\bar{\rho}_{\phi,\mathfrak{l}}:G_F\longrightarrow {\rm{Aut}}(\phi[\mathfrak{l}])\cong {\rm{GL}}_r(\mathbb{F}_{\mathfrak{l}})$$
with Aschbacher's theorem (\cite{BHR13}, Theorem 2.1.5) to prove ``irreducibility of mod $\fl$ Galois representations'' toward ``surjectivity of mod $\fl$ Galois representations''  assuming the characteristic of $F$ is large enough. This procedure is described in section 3 and 4.

In section 5, we prove the irreducibility of the Galois representation. On the other hand, we prove the surjectivity of mod $(T)$ Galois representation directly using a result of Abhyankar. 

The proof toward $\mathfrak{l}$-adic surjectivity is similar to the rank-$3$ case and is proved in section 6. In section 7, we prove the adelic surjectivity under a further assumption $q\equiv 1 \mod r$. The proof for adelic surjectivity is similar to the rank-$3$ case as well.
 
\section{Preliminaries}
\subsection{Notation}

\begin{itemize}
\item$q=p^e$ is a prime power with $p\geqslant5$
\item$A=\mathbb{F}_q[T]$
\item$F=\mathbb{F}_q(T)$
\item$F^{{\rm{sep}}}$= separable closure of $F$
\item$F^{{\rm{alg}}}$= algebraic closure of $F$
\item$G_F= {\rm{Gal}}(F^{{\rm{sep}}}/F)$
\item $\fl=(l)$ a prime ideal of $A$, and define $\deg_T(\fl)=\deg_T(l)$ 
\item$A_{\mathfrak{p}}=$ completion of A at the nonzero prime ideal $\mathfrak{p}\triangleleft A$ 
\item$\widehat{A}=\underset{\mathfrak{a}\triangleleft A}{\varprojlim}A/\mathfrak{a}$
\item$F_{\mathfrak{p}}=$ fraction field of $A_{\mathfrak{p}}$
\item$\mathbb{F}_{\mathfrak{p}}= A/\mathfrak{p}$ 
\end{itemize}

\subsection{Drinfeld module over a field}
Let $K$ be a field, we call $K$ an {\textbf{$A$-field}} if $K$ is equipped with a homomorphism $\gamma: A \rightarrow K$. Let $K\{\tau\}$ be the ring of skew polynomials satisfying the commutation rule $c^q\cdot \tau=\tau \cdot c$.\\
A \textbf{Drinfeld $A$-module over $K$ of rank $r\geqslant 1$} is a ring homomorphism
\begin{align*}
\phi&: A\longrightarrow K\{\tau\} \\
      &\ \ \ a \mapsto \phi_a=\gamma(a)+ \sum^{r\cdot {\rm{deg}}(a)}_{i=1}g_i(a)\tau^i.
\end{align*}
It is uniquely determined by $\phi_T=\gamma(T)+\sum^{r}_{i=1}g_i(T)\tau^i$, where $g_r(T)\neq0$.\\ 
$\ker(\gamma)$ is called the {\textbf{$A$-characteristic}} of $K$, and we say $K$ has {\textbf{generic characteristic}} if $\ker(\gamma)=0$.\\

\begin{prop}\label{prop0.1}
Let $\phi$ be a Drinfeld module over $K$ of rank $r$ with nonzero $A$-characteristic $\mathfrak{p}$. For each $a\in A$, we may write $\phi$ as $\phi_{a}=c(a)\tau^{m(a)}+\cdots+C(a)\tau^{M(a)}$. There is a unique integer $0<h\leqslant r$ such that $m(a)=hv_{\mathfrak{p}}(a)$ for all nonzero $a\in A$, where $v_{\mathfrak{p}}$ is the $\mathfrak{p}$-adic valuation of $F$.
\end{prop}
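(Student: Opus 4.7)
The plan is to show that $m: A\setminus\{0\} \to \mathbb{Z}_{\ge 0}$ behaves like a (nonnegative multiple of a) discrete valuation concentrated at $\mathfrak{p}$, which forces it to be a constant multiple of $v_{\mathfrak{p}}$. The two structural facts I would extract are (i) multiplicativity and (ii) a characterization of when $m(a)=0$; both come directly from $\phi$ being a ring homomorphism and from the definition of the $A$-characteristic.

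First I would establish additivity: for nonzero $a,b\in A$, the equality $\phi_{ab}=\phi_a\phi_b$ in $K\{\tau\}$, combined with the commutation rule $c^q\tau=\tau c$, shows that the lowest-degree term of $\phi_a\phi_b$ is
$$c(a)\,c(b)^{q^{m(a)}}\,\tau^{m(a)+m(b)},$$
and this coefficient is nonzero because $c(a),c(b)\neq 0$ in the field $K$. Hence $m(ab)=m(a)+m(b)$. Second I would note that the constant term of $\phi_a$ is $\gamma(a)$, so $m(a)=0$ if and only if $\gamma(a)\neq 0$, which by definition of the $A$-characteristic is equivalent to $a\notin\mathfrak{p}$.

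Combining these, since $A$ is a PID, I fix a generator $\pi$ of $\mathfrak{p}$ and set $h:=m(\pi)$; because $\pi\in\mathfrak{p}$, $h>0$. For an arbitrary nonzero $a\in A$, factor $a=\pi^{n}u$ with $n=v_{\mathfrak{p}}(a)$ and $u\in A\setminus\mathfrak{p}$. Additivity yields
$$m(a)=n\,m(\pi)+m(u)=h\,v_{\mathfrak{p}}(a)+0=h\,v_{\mathfrak{p}}(a),$$
which is the claimed formula. Uniqueness of $h$ is immediate by plugging in $a=\pi$, and independence of the choice of generator $\pi$ follows from the same specialization (any other generator differs from $\pi$ by a unit of $A$, i.e.\ an element of $A\setminus\mathfrak{p}$).

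The upper bound $h\leqslant r$ I would obtain from the inequality $m(\pi)\leqslant M(\pi)=r\deg(\pi)$ coming from the rank of $\phi$ (taking the generator so that $\deg(\pi)$ is controlled, or invoking the $A/\mathfrak{p}$-module structure on $\phi[\mathfrak{p}]$ to see that $|\phi[\mathfrak{p}]|=q^{M(\pi)-m(\pi)}$ is an integer power of $|A/\mathfrak{p}|$ of exponent at most $r$). The only genuinely delicate step is the multiplicativity computation: one has to track carefully how the skew commutation promotes the constant $c(b)$ to $c(b)^{q^{m(a)}}$, and why this remains nonzero. Everything else is a direct consequence of the ring-homomorphism property of $\phi$ and the definition of the $A$-characteristic.
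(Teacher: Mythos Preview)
The paper does not actually prove this proposition; it simply cites Goss, \emph{Basic Structures of Function Field Arithmetic}, Proposition 4.5.7. Your argument is essentially the standard one found there: additivity of $m$ under products (from $\phi$ being a ring homomorphism into the twisted polynomial ring), the characterization $m(a)=0\iff a\notin\mathfrak{p}$ (from the constant term being $\gamma(a)$), and then the valuation-theoretic conclusion $m=h\cdot v_{\mathfrak{p}}$ by factoring out the $\pi$-part. So your approach is correct and matches what the cited reference does.

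One small caution on the bound $h\le r$: with $v_{\mathfrak{p}}$ normalized so that $v_{\mathfrak{p}}(\pi)=1$, your module-theoretic count gives $|\phi[\mathfrak{p}]|=q^{M(\pi)-m(\pi)}=|A/\mathfrak{p}|^{s}$ for some integer $s\ge 0$, hence $h=m(\pi)=(r-s)\deg\pi$. This shows $h$ is a positive multiple of $\deg\pi$ with $h\le r\deg\pi$, not $h\le r$ directly. In Goss's formulation one writes $m(a)=H\cdot\deg\mathfrak{p}\cdot v_{\mathfrak{p}}(a)$ with $0<H\le r$; the paper's statement suppresses the factor $\deg\mathfrak{p}$, which is harmless for its later use (only the case $\mathfrak{l}$ of arbitrary degree with $h_{\mathfrak{l}}$ the true height appears) but makes the literal inequality $h\le r$ slightly imprecise when $\deg\mathfrak{p}>1$. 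Your sketch of the bound via $\phi[\mathfrak{p}]$ is the right idea; just be aware of this normalization subtlety.
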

This integer $h$ is called the {\textbf{height}} of $\phi$.

\begin{proof}
See \cite{Goss} Proposition 4.5.7.
\end{proof}

An \textbf{isogeny} from a Drinfeld module $\phi$ to another Drinfeld module $\psi$ over $K$ is an element $u\in K\{\tau\}$ such that $u\cdot \phi_a=\psi_a \cdot u \ \ \forall \ a\in A$. Hence the \textbf{endomorphism ring} of $\phi$ over $K$ is defined as
$${\rm{End}}_{K}(\phi)=\{u\in K\{\tau\}\ |\ u\cdot \phi_T=\psi_T \cdot u \}.$$
The Drinfeld module $\phi$ gives $K^{{\rm{alg}}}$ an $A$-module structure, where $a\in A$ acts on $K^{{\rm{alg}}}$ via $\phi_{a}$. We use the notation $^{\phi}K^{{\rm{alg}}}$ to emphasize the action of $A$ on $K^{{\rm{alg}}}$.\\
The {\textbf{$a$-torsion}} $\phi[a]=\{ {\rm{zeros}}\ {\rm{of}}\ \phi_a(x)= \gamma(a)x+ \sum^{r\cdot {\rm{deg}}(a)}_{i=1}g_i(a)x^{q^i} \}\subseteq K^{alg}$. The action $b\cdot \alpha=\phi_b(\alpha) \ \forall\ b\in A, \forall \alpha\in\phi[a]$ also gives $\phi[a]$ an $A$-module structure. \\

\begin{prop}\label{prop0.2}
Let $\phi$ be a rank $r$ Drinfeld module over $K$ and $\mathfrak{a}$ an ideal of $A$,
\begin{enumerate}
\item[1.] If $\phi$ has $A$-characteristic prime to $\mathfrak{a}$, then the $A/\mathfrak{a}$-module $\phi[\mathfrak{a}]$ is free of rank $r$
\item[2.] If $\phi$ has nonzero $A$-characteristic $\mathfrak{p}$, let $h$ be the height of $\phi$, then the $A/\mathfrak{p}$-module $\phi[\mathfrak{p}^e]$ is free of rank $r-h$ for all $e\in \mathbb{Z}_{\geqslant1}$. 
\end{enumerate}

\end{prop}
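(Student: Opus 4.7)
My plan is to treat both parts uniformly by viewing the torsion set as a finitely generated $A$-module annihilated by the relevant ideal, and then combining the structure theorem over the principal ideal domain $A$ with a direct count of the number of distinct roots of $\phi_a(x)$ for a well-chosen $a$.

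For part (1), the Chinese Remainder Theorem decomposes both $A/\mathfrak{a}$ and $\phi[\mathfrak{a}]$ according to the prime power factorization of $\mathfrak{a}$, reducing the problem to the case $\mathfrak{a} = \mathfrak{l}^n$ with $\mathfrak{l}$ a single prime coprime to the $A$-characteristic. For any generator $a$ of $\mathfrak{l}^n$ the polynomial $\phi_a(x) = \gamma(a)x + \sum_{i\geqslant 1} g_i(a)x^{q^i}$ has nonzero linear coefficient $\gamma(a)$, so it is a separable additive polynomial of $x$-degree $q^{r\deg(a)}$; hence $|\phi[\mathfrak{l}^n]| = |A/\mathfrak{l}^n|^r$, and by the same argument applied to $\mathfrak{l}$ itself, $|\phi[\mathfrak{l}]| = |A/\mathfrak{l}|^r$.

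For part (2), with $a$ a generator of $\mathfrak{p}$, Proposition 0.1 gives $m(a^e) = he$. Writing $\phi_{a^e}(x) = c(a^e)x^{q^{he}} + \cdots + C(a^e)x^{q^{re\deg(a)}}$ and using that an additive polynomial of the form $f(x) = \tilde f(x^{q^m})$ with $\tilde f$ separable has exactly $\deg(f)/q^m$ distinct roots, one obtains $|\phi[\mathfrak{p}^e]| = q^{(r-h)e\deg(a)} = |A/\mathfrak{p}^e|^{r-h}$; specializing to $e=1$ gives $|\phi[\mathfrak{p}]| = |A/\mathfrak{p}|^{r-h}$.

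To conclude, let $\mathfrak{q}$ denote $\mathfrak{l}$ in part (1) or $\mathfrak{p}$ in part (2), and apply the structure theorem for finitely generated torsion modules over the PID $A$ to obtain
$$\phi[\mathfrak{q}^n] \;\cong\; \bigoplus_{i=1}^{N} A/\mathfrak{q}^{e_i},\qquad 1 \leqslant e_i \leqslant n.$$
Comparing cardinalities forces $\sum_i e_i = rn$ in part (1) (respectively $(r-h)n$ in part (2)). On the other hand, the $\mathfrak{q}$-torsion of the right-hand side is $(A/\mathfrak{q})^{N}$, which must agree with $\phi[\mathfrak{q}]$, so the independent size computation above gives $N = r$ (respectively $N = r-h$). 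The two equalities together pin down every $e_i = n$, yielding freeness of the claimed rank. I expect the main technical step to be the root count in part (2): because $\phi_a'(x) \equiv 0$ the polynomial $\phi_a$ is inseparable, and one must factor through the substitution $y = x^{q^h}$ before counting — this is exactly the content Proposition 0.1 supplies by pinning down $h$ as the uniform $\tau$-valuation.
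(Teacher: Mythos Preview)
The paper does not actually prove this proposition; it simply cites Goss, Proposition~4.5.7. Your argument is precisely the standard one found there: count roots of the additive polynomial $\phi_a(x)$, then invoke the structure theorem for finite modules over the PID $A$ together with the observation that the $\mathfrak{q}$-torsion of $\phi[\mathfrak{q}^n]$ is $\phi[\mathfrak{q}]$. The logic is sound in both parts.

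One arithmetic point deserves care in part~(2). Taking the paper's Proposition~2.1 literally, you obtain $m(a^e)=he$ for $a$ a generator of $\mathfrak{p}$; the root count then yields
\[
|\phi[\mathfrak{p}^e]| \;=\; q^{\,r e\deg(a)\,-\,m(a^e)} \;=\; q^{\,r e\deg(a)\,-\,he},
\]
which equals $q^{(r-h)e\deg(a)}=|A/\mathfrak{p}^e|^{r-h}$ only when $\deg(\mathfrak{p})=1$. The discrepancy is in the paper's statement of Proposition~2.1, not in your reasoning: the correct formula (as in Goss) is $m(a)=h\cdot\deg(\mathfrak{p})\cdot v_{\mathfrak{p}}(a)$, and with that normalization your count goes through verbatim for arbitrary $\mathfrak{p}$. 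You may want to flag this when writing up, or simply restrict the explicit computation to the case $\deg(\mathfrak{p})=1$ and note that the general case is identical after adjusting the exponent.
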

\begin{proof}
See \cite{Goss} Proposition 4.5.7.
\end{proof}

\begin{note}{\rm{From now on, we consider $K=F$ and $\gamma: A \rightarrow F$ is the natural injection map.}}
\end{note}

Let $\phi$ be a rank $r$ Drinfeld module over $F$ of generic characteristic, then $\phi_a(x)$ is separable, so we have $\phi[a]\subseteq F^{{\rm{sep}}}$.
This implies that $\phi[a]$ has a $G_F$-module structure. Given a nonzero prime ideal $\fl$ of $A$, we can consider the $G_F$-module $\phi[\mathfrak{l}]$. We obtain the so-called \textbf{mod $\mathfrak{l}$ Galois representation}
$$\bar{\rho}_{\phi,\mathfrak{l}}:G_F\longrightarrow {\rm{Aut}}(\phi[\mathfrak{l}])\cong GL_r(\mathbb{F}_{\mathfrak{l}}).$$
Taking inverse limit with respect to $\mathfrak{l}^i$, we have the \textbf{$\mathfrak{l}$-adic Galois representation}
$${\rho}_{\phi,\mathfrak{l}}: G_F \longrightarrow \varprojlim_{i}{\rm{Aut}}(\phi[\mathfrak{l^i}])\cong {\rm{GL_r}}(A_{\mathfrak{l}}).$$
Combining all representations together, we get the \textbf{adelic Galois representation}
$${\rho}_{\phi}:G_F\longrightarrow \varprojlim_{\mathfrak{a}}{\rm{Aut}}(\phi[\mathfrak{a}])\cong {\rm{GL_r}}(\widehat{A}).$$

\subsection{Carlitz module}
The \textbf{Carlitz module} is the Drinfeld module $C:A\longrightarrow F\{\tau\}$ of rank $1$ defined by $C_{T}=T+\tau$.
\begin{prop}\label{prop1}{{\rm{(Hayes \cite{Hay74})}}} 
For a nonzero ideal $\mathfrak{a}$ of $A$, the Galois representation
$$\bar{\rho}_{C,\mathfrak{a}}:G_F\longrightarrow {\rm{Aut}}(C[\mathfrak{a}])\cong (A/\mathfrak{a})^*$$ is surjective. 
This implies the adelic Galois representation of Carlitz module is surjective. Moreover, for prime ideals $\mathfrak{p}$ of $A$ such that $\mathfrak{p}\not|\;\mathfrak{a} $, we have $\bar{\rho}_{C,\mathfrak{a}}({{Frob}}_{\mathfrak{p}}) \equiv \mathfrak{p} \mod \mathfrak{a}$.
\end{prop}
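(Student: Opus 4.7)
The plan is to mimic classical cyclotomic theory, with the Carlitz module playing the role of $\mathbb{G}_m$. First I would reduce to the prime-power case via the Chinese Remainder Theorem: $C[\mathfrak{a}] \cong \bigoplus_i C[\mathfrak{p}_i^{n_i}]$ compatibly with $G_F$ and with $(A/\mathfrak{a})^{\ast} \cong \prod_i (A/\mathfrak{p}_i^{n_i})^{\ast}$. Because $C_T = T + \tau$ has leading coefficient $1$, Carlitz has good reduction at every finite prime, so each $F(C[\mathfrak{p}_i^{n_i}])/F$ is unramified outside $\mathfrak{p}_i$ and $\infty$. Once total ramification at $\mathfrak{p}_i$ is established, the distinct ramification loci force the subfields $F(C[\mathfrak{p}_i^{n_i}])$ to be pairwise linearly disjoint over $F$, and the adelic surjectivity reduces to the prime-power case.

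For a single prime $\mathfrak{p} = (P)$ with $P$ monic irreducible of degree $d$, the key input is the congruence $C_P \equiv \tau^d \pmod{\mathfrak{p}}$ in $A\{\tau\}$. The reduction $\bar{C}$ is a rank-one Drinfeld module over $\mathbb{F}_{\mathfrak{p}}$ whose height $h$ satisfies $h \geq 1$ (the constant term of $\bar{C}_P$ equals $\gamma(P)=0$), so Proposition~\ref{prop0.2} forces $\bar{C}[\mathfrak{p}]$ to be free of rank $1-h = 0$, i.e., trivial. Hence the additive polynomial $\bar{C}_P(x)$ is purely inseparable of degree $q^d$, and monicity of $P$ pins it down as $x^{q^d}$; that is, $\bar{C}_P = \tau^d$. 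Writing $C_P = P + g_1(P)\tau + \cdots + g_{d-1}(P)\tau^{d-1} + \tau^d$ in $A\{\tau\}$, this forces $P \mid g_i(P)$ for $1 \le i \le d-1$. A nonzero $\lambda \in C[\mathfrak{p}]$ then satisfies $\lambda^{q^d - 1} = -P \cdot u(\lambda)$ with $u(\lambda) \in A[\lambda]$ of $v_{\mathfrak{p}}$-valuation zero, and the Newton polygon of the corresponding polynomial for $\lambda$ has the single slope $-1/(q^d - 1)$. Hence $F(\lambda)/F$ is totally ramified at $\mathfrak{p}$ with ramification index $q^d - 1$; since $[F(\lambda):F]$ also divides $|(A/\mathfrak{p})^{\ast}| = q^d - 1$, we get $[F(C[\mathfrak{p}]):F] = q^d - 1$ and mod-$\mathfrak{p}$ surjectivity.

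For $\mathfrak{p}^n$ I would induct on $n$. Assume $F(C[\mathfrak{p}^n])/F$ is totally ramified of degree $|(A/\mathfrak{p}^n)^{\ast}|$ at $\mathfrak{p}$ with uniformizer $\lambda_n$ at the unique prime above $\mathfrak{p}$. Pick $\mu$ with $C_P(\mu) = \lambda_n$; then $\mu$ is a primitive $\mathfrak{p}^{n+1}$-torsion generating $F(C[\mathfrak{p}^{n+1}])$ over $F(C[\mathfrak{p}^n])$. Normalizing the valuation so that $v(\lambda_n) = 1$, the bound $v(g_i(P)) \geq v(P) = |(A/\mathfrak{p}^n)^{\ast}|$ collapses the Newton polygon of $C_P(x) - \lambda_n$ to a single segment from $(0,1)$ to $(q^d, 0)$ of slope $-1/q^d$. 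Hence $[F(\mu):F(C[\mathfrak{p}^n])] = q^d$ and total ramification propagates. Combined with the upper bound coming from the rank-one $A/\mathfrak{p}^{n+1}$-module structure on $C[\mathfrak{p}^{n+1}]$, this yields mod-$\mathfrak{p}^{n+1}$ surjectivity. Passing to the inverse limit in $n$ and recombining primes via CRT yields the adelic statement.

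For the Frobenius formula at $\mathfrak{q} = (Q)$ with $Q$ monic and $\mathfrak{q} \nmid \mathfrak{a}$, good reduction makes $F(C[\mathfrak{a}])/F$ unramified at $\mathfrak{q}$, so $\mathrm{Frob}_{\mathfrak{q}}$ is well-defined and acts on $\bar{C}[\mathfrak{a}] \cong C[\mathfrak{a}]$ as the geometric Frobenius $\tau^{\deg Q}$. Repeating the argument of the second paragraph at $\mathfrak{q}$ yields $\bar{C}_Q \equiv \tau^{\deg Q} \pmod{\mathfrak{q}}$, so $\tau^{\deg Q}$ and $\bar{C}_Q$ act identically on $\bar{C}[\mathfrak{a}]$; the latter acts as multiplication by $Q \bmod \mathfrak{a}$ via the rank-one $A/\mathfrak{a}$-module structure, giving $\bar{\rho}_{C,\mathfrak{a}}(\mathrm{Frob}_{\mathfrak{q}}) \equiv \mathfrak{q} \pmod{\mathfrak{a}}$. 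The main technical obstacle throughout is the Newton-polygon bookkeeping in the inductive step---verifying that no intermediate vertex spoils the single-slope picture---but this is controlled uniformly by the lower bound $v(g_i(P)) \ge v(P)$, which grows rapidly with $n$.
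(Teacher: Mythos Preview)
The paper does not supply its own proof of this proposition: it is stated with an attribution to Hayes \cite{Hay74} and then used as a black box. Your proposal is therefore not being compared against anything in the paper, but against the classical argument, and what you have written is essentially that argument. The Eisenstein/Newton-polygon computation for $C_P(x)/x$ at $\mathfrak{p}$, the inductive step for $\mathfrak{p}^n$ via $C_P(x)-\lambda_n$, and the identification $\bar C_Q=\tau^{\deg Q}$ giving the Frobenius formula are all correct and are exactly how this is done in Hayes's work and in standard expositions (e.g.\ Goss or Rosen).

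One small point worth tightening: in your CRT reduction you assert pairwise linear disjointness from ``distinct ramification loci,'' but the loci all share $\infty$. The argument still goes through because each $F(C[\mathfrak{p}_i^{n_i}])/F$ is \emph{totally} ramified at $\mathfrak{p}_i$ while the others are unramified there, forcing any common subfield to be $F$; since the extensions are abelian, trivial intersection gives linear disjointness. You might state this explicitly rather than leaving it to ``distinct ramification loci.'' Apart from that, the bookkeeping in the inductive Newton-polygon step is fine: with $v(\lambda_n)=1$ and $v(P)=|(A/\mathfrak{p}^n)^*|\ge 1$, every intermediate vertex $(q^i,v(g_i(P)))$ lies on or above the segment from $(0,1)$ to $(q^d,0)$, so no spurious break appears.
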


\subsection{Reduction of Drinfeld modules}
Let $K$ be a local field with uniformizer $\pi$, valuation ring $R$, unique maximal ideal $\fp:=(\pi)$, normalized valuation $v$ and residue field $\F_\fp$. Let $\phi: A\longrightarrow K\{\tau\}$ be a Drinfeld module of rank $r$. We say that $\phi$ has \textbf{stable reduction} if there is a Drinfeld module $\phi':A\longrightarrow R\{\tau\}$ such that 
\begin{enumerate}
\item$\phi'$ is isomorphic to $\phi$ over $K$;
\item$\phi'$ mod $\fp$ is still a Drinfeld module (i.e. $\phi'_{T}$ mod $\mathfrak{p}$ has ${\rm{deg}}_{\tau}\geqslant 1$ ).
\end{enumerate}
$\phi$ is said to have \textbf{stable reduction of rank $r_1$} if $\phi$ has stable reduction and $\phi$ mod $\mathfrak{p}$ has rank $r_1$.\\
$\phi$ is said to have \textbf{good reduction} if $\phi$ has stable reduction and $\phi$ mod $\mathfrak{p}$ has rank $r$.

\begin{rem}\ \\
{\rm{We sometimes denote $\phi \mod \mathfrak{p}$ by $\phi \otimes \mathbb{F}_{\mathfrak{p}}$.}}

\end{rem}

The Drinfeld module analogue of N\`eron-Ogg-Shafarevich is the following:

\begin{prop}\label{prop3}{{\rm{(\cite{Tak82}, Theorem 1)}}} 
{{Let $\phi:A\longrightarrow K\{\tau\}$ be a Drinfeld module and $\mathfrak{l}$ be a nonzero prime ideal different from the $A$-characteristic of $\phi\otimes \F_\fp$. Then $\phi$ has good reduction if and only if the $\mathfrak{l}$-adic Galois representation is unramified at $\mathfrak{p}$}.
In other words, $\rho_{\phi,\mathfrak{l}}(I_{\mathfrak{p}})=1$  where $I_\mathfrak{p}$ is the inertia subgroup of $G_K$}.
\end{prop}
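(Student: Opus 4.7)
I address the two implications separately.

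($\Rightarrow$) Suppose $\phi$ has good reduction; after changing isomorphism class we may assume $\phi_T=\gamma(T)+g_1\tau+\cdots+g_r\tau^r$ with all $g_i\in R$ and $g_r\in R^{\times}$. The hypothesis $\fl\neq A\cap\fp$ means $\gamma(l^n)\in R^{\times}$, so $\phi_{l^n}(x)\in R[x]$ has unit derivative and is separable. Hensel's lemma then puts its $K^{\alg}$-roots in Galois-equivariant bijection with the roots of its reduction in $\F_\fp^{\alg}$, giving a $G_K$-equivariant isomorphism $\phi[\fl^n]\cong(\phi\otimes\F_\fp)[\fl^n]$ of free $A/\fl^n$-modules of rank $r$ (by Proposition~\ref{prop0.2}(1)). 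The Galois action on the right factors through $\Gal(\F_\fp^{\sep}/\F_\fp)$, so $I_\fp$ acts trivially on every $\phi[\fl^n]$, hence on the Tate module.

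($\Leftarrow$) Now assume $\rho_{\phi,\fl}(I_\fp)=1$. Invoke the potential stable reduction theorem for Drinfeld modules to choose a finite Galois extension $L/K$ over which $\phi$ acquires stable reduction of some rank $r_1\leq r$. The aim is to show $r_1=r$, yielding good reduction over $L$; a Galois-descent step, using that the stable model of $\phi$ is canonical up to isomorphism and that $I_L\subset I_\fp$ continues to act trivially, then returns good reduction to $K$. Thus the crux is to exclude $r_1<r$ under the triviality hypothesis.

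Suppose for contradiction $r_1<r$. The Drinfeld-module Tate uniformization presents $\phi$ over $L$ as a rigid-analytic quotient $\phi'/\Lambda$, where $\phi'$ is a Drinfeld module of good reduction of rank $r_1$ and $\Lambda\subset\overline{L}$ is a $G_L$-stable discrete $A$-lattice (acted on via $\phi'$) of rank $r-r_1\geq 1$ whose nonzero elements have strictly negative valuation. This gives an exact sequence of $G_L$-modules
\[
0\longrightarrow\phi'[\fl^n]\longrightarrow\phi[\fl^n]\longrightarrow\Lambda/\fl^n\Lambda\longrightarrow 0.
\]
Picking $0\neq\lambda\in\Lambda$ and $\mu\in\overline{L}$ with $\phi'_{l^n}(\mu)=\lambda$, the associated Kummer cocycle $\sigma\mapsto\sigma(\mu)-\mu\in\phi'[\fl^n]$ must be trivial on $I_L$ by the hypothesis, so $\mu\in L^{\mathrm{unr}}$. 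But a Newton-polygon analysis of $\phi'_{l^n}(x)=\lambda$ near $x=\mu$, using the strictly negative valuation of $\lambda$ together with $\gamma(l)\in R^{\times}$, shows that $\mu$ generates a genuinely (tamely) ramified extension of $L$ once $n$ is large, yielding the desired contradiction. The main obstacle is the construction of the Tate uniformization itself, which is the rigid-analytic heart of Takahashi's \cite{Tak82} argument; once in hand, the Kummer-cocycle contradiction and the descent step are relatively formal.
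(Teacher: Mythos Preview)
The paper does not supply its own proof of this proposition: it is stated with the attribution ``(\cite{Tak82}, Theorem~1)'' and no argument follows. So there is nothing in the paper to compare your proposal against line by line.

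That said, your sketch is essentially the standard proof, and in particular it is the outline of Takahashi's original argument. The forward direction via Hensel lifting is correct as stated. For the converse, your plan---pass to a finite extension with stable reduction, invoke the Tate uniformization $(\phi',\Lambda)$, and derive a valuation contradiction from a preimage $\mu$ of a nonzero lattice element $\lambda$---is exactly the right mechanism. Two small points deserve a word more of care. First, to conclude $\sigma(\mu)=\mu$ for $\sigma\in I_L$ from $\sigma(\mu)-\mu\in\Lambda$, you should use that nonzero elements of $\Lambda$ have valuation bounded above by some fixed $C<0$, while $v(\mu)=v(\lambda)/q^{r_1 n\deg l}\to 0^-$; hence for large $n$ the difference is forced to be $0$. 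Second, the descent from $L$ back to $K$ is cleanest if you first replace $K$ by $K^{\mathrm{unr}}$ (so that $I_\fp=G_K$ and any finite extension is totally ramified), run the contradiction argument there, and then observe that good reduction over $K^{\mathrm{unr}}$ already suffices for the conclusion $\rho_{\phi,\fl}(I_\fp)=1\Rightarrow$ good reduction, since good reduction is insensitive to unramified base change. With these refinements your proposal is a faithful reconstruction of the cited result.
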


Let $u:\phi \rightarrow \psi$ be an isogeny between Drinfeld modules over $K$. We study the reduction type of the isogenous Drinfeld module. The isogeny $u$ induces an $G_K$-equivariant isomorphism between rational Tate modules
$$u:V_\mathfrak{l}(\phi)\rightarrow V_\mathfrak{l}(\psi),$$
where $V_\mathfrak{l}(\phi):=T_\mathfrak{l}(\phi)\otimes F_\fl$. Suppose $\mathfrak{l}$ is different from the $A$-characteristic of $\phi\otimes \F_\fp$, then the Drinfeld module analogue of N\`eron-Ogg-Shafarevich implies $\psi$ has good reduction at $\mathfrak{p}$. As a result, isogenous Drinfeld modules over a local field either both have good reduction or  bad reduction.

On the other hand, we prove that isogenous Drinfeld modules also preserve stable bad reduction under a condition on the inertia action. 

\begin{prop}\label{red}
Let $u:\phi \rightarrow \psi$ be an isogeny between Drinfeld modules over $K$. Suppose there is a prime ideal $\fl$ of $A$, and $\fl$ is different from the $A$-characteristic of $\phi\otimes\F_\fp$. Assume further that $\text{the inertia group $I_\fp$ acts on $V_{\fl}(\phi)$ via a group of unipotent matrices},$ then the isogenous Drinfeld module $\psi$ has stable reduction at $\fp$.

\end{prop}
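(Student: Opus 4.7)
The plan is to transfer the unipotence hypothesis from $\phi$ to $\psi$ via the isogeny, and then to appeal to the Drinfeld-module analog of Grothendieck's semistable reduction criterion together with the potential-stable-reduction theorem and Galois descent.

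First, since $u:\phi\to\psi$ is defined over $K$, it induces a $G_K$-equivariant $F_\fl$-linear isomorphism of rational Tate modules $V_\fl(\phi)\xrightarrow{\sim} V_\fl(\psi)$, as already noted in the discussion preceding the proposition. Consequently, the inertia group $I_\fp$ acts on $V_\fl(\psi)$ via the same group of unipotent matrices by which it acts on $V_\fl(\phi)$. Hence without loss of generality we may work directly with $\psi$, assuming that $I_\fp$ acts unipotently on $V_\fl(\psi)$ and that $\fl$ is distinct from the $A$-characteristic of the reduction.

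Next, I would invoke the potential-stable-reduction theorem for Drinfeld modules to find a finite extension $L/K$ over which $\psi$ acquires stable reduction of some rank $r_1\leqslant r$. Over $L$, Drinfeld's analytic uniformization furnishes a canonical $G_L$-stable short exact sequence
\[
0 \longrightarrow V_\fl(\bar\psi) \longrightarrow V_\fl(\psi) \longrightarrow \Lambda \otimes_A F_\fl \longrightarrow 0,
\]
where $\bar\psi$ denotes the rank-$r_1$ good-reduction piece and $\Lambda$ is a discrete $A$-lattice of rank $r-r_1$. Because this filtration is intrinsic to $\psi$, it is in fact stable under the larger group $G_K$, not merely $G_L$. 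Choosing a basis adapted to this filtration, one sees that $I_\fp$ acts by block upper-triangular matrices; moreover, the two diagonal blocks factor through the finite quotient $I_\fp/I_L$ (since $I_L$ acts trivially on $V_\fl(\bar\psi)$ by good reduction over $R_L$, and on $\Lambda$ through a finite image because $\Lambda$ is a finitely generated discrete $A$-module).

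The main obstacle is to exploit the unipotence of $I_\fp$ to descend the integral model from $R_L$ back to $R_K$. The strategy is to argue that unipotence, coupled with the finite-order action on both graded pieces, forces these block-diagonal actions to be trivial: the intrinsic nature of $V_\fl(\bar\psi)$ as the Tate module of a Drinfeld module with good reduction and the intrinsic nature of $\Lambda$ as a discrete lattice should prevent a nontrivial $p$-power contribution from the finite quotient $I_\fp/I_L$ from being compatible with unipotent eigenvalues. Once the action of $I_\fp/I_L$ on the graded pieces is known to be trivial, Galois descent applied to the coefficients $g_i(T)\in R_L$ of the integral model of $\psi$ over $R_L$, using the identity $R_L^{\Gal(L/K)}=R_K$, produces a model of $\psi$ with coefficients in $R_K$ whose reduction modulo $\fp$ is still a Drinfeld module. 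This is exactly the stable reduction of $\psi$ at $\fp$, and finishes the proof.
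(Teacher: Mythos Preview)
Your overall strategy—transfer unipotence to $\psi$, pass to a finite extension $L/K$ where $\psi$ acquires stable reduction, analyze the Tate filtration, and then descend—is reasonable in spirit, but the final descent step contains a genuine gap.

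The problem is the sentence ``Galois descent applied to the coefficients $g_i(T)\in R_L$ of the integral model of $\psi$ over $R_L$, using the identity $R_L^{\Gal(L/K)}=R_K$, produces a model of $\psi$ with coefficients in $R_K$.'' Triviality of the $I_\fp$-action on the graded pieces of the Tate module does \emph{not} imply that the coefficients of a chosen stable model over $R_L$ are $\Gal(L/K)$-fixed. Galois acts on the Tate module through the torsion points, not through the coefficients of a particular integral model; conjugating by $\sigma\in\Gal(L/K)$ sends the model $\psi'=c^{-1}\psi\, c$ to $\sigma(c)^{-1}\psi\,\sigma(c)$, which is isomorphic to $\psi'$ via the scalar $\sigma(c)/c$ but need not equal $\psi'$. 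What you actually need is that the twisting scalar $c$ can be chosen with $v(c)\in\mathbb{Z}$, and nothing in your Tate-module analysis supplies this valuation information. (You should also make explicit that $L/K$ may be taken \emph{tamely} ramified; this is true and is what makes the prime-to-$p$ finite group argument in your step about graded pieces work, but you do not state it.)

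The paper's proof avoids descent entirely and argues directly with valuations. After transferring unipotence to $\psi$, it notes that $K(\psi[\fl])/K$ then has $p$-power ramification index, while stable reduction for Drinfeld modules is always achieved over a tamely ramified extension; hence if $\psi$ lacks stable reduction over $K$ it still lacks it over $K(\psi[\fl])$, and one may assume $\psi[\fl]\subset K$. Then every slope of the Newton polygon of $\psi_\fl(x)/x$ is an integer, so the first break-point $(q^m-1,\,v(g_m(l)))$ satisfies $(q^m-1)\mid v(g_m(l))$. Twisting by a suitable $c\in K^*$ makes $v(g_m(l))=0$, forcing some non-constant coefficient of $\psi_T$ to be a unit—i.e., $\psi$ already has stable reduction over $K$, a contradiction. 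This Newton-polygon step is the idea your argument is missing.
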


\begin{proof}

We may assume $\psi$ is defined over the valuation ring $R$ after replacing $\psi$ by an isomorphic copy. Moreover, the assumption that $I_\fp$ acts on $V_\fl(\phi)$ by unipotent matrices implies that the action of $I_\fp$ on $V_\fl(\psi)$ is also unipotent. Therefore, $I_\fp$ acts by unipotent matrices on $\psi[\fl]$, which means the ramification index of $K(\psi[\fl])/K$ is a power on $p$.

Suppose $\psi$ does not have stable reduction over $K$, then any extension $L/K$ over which $\psi$ has stable reduction has ramification index divisible by some prime not equal to $p$. Therefore, if $\psi$ does not have stable reduction over $K$, then $\psi$ does not have stable reduction over $K(\psi[\fl])$ either, so we may assume that $\psi[\fl]$ is rational over $K$.

On the other hand, consider the Newton Polygon ${\rm NP}(\psi_\fl(x)/x)$ of the polynomial $\psi_{\fl}(x)/x$. If $\psi[\fl]$ is rational over K, then the roots of $\psi_\fl(x)$ have integer valuations, so the slope of the first line segment of ${\rm NP}(\psi_\fl(x)/x)$ is an integer. Otherwise, we have the slope of the first line segment of ${\rm NP}(\psi_\fl(x)/x)$ is a simplified fraction $\frac{c}{d}$ with denominator $d\neq 1$. Thus there are roots of $\psi_\fl(x)/x$ with valuation equal to $\frac{1}{d}$, which is a contradiction. 

Now we write $$\psi_\fl(x)=\sum_{i=0}^{r\cdot \deg_T(l)}g_i(l)x^{q^i}, \text{ with }g_0(l)=l.$$
The first line segment of ${\rm NP}(\psi_\fl(x)/x)$ has endpoints $(0,0)$ and $(q^m-1, v(g_m(l)))$. The integrality of slope implies $q^m-1 \mid v(g_m(l))$. Hence after taking a suitable isomorphic copy of $\psi$, we may assume $v(g_m(l))=0$. This implies one of the coefficient of $\psi_T(x)$ other than $Tx$ must be a unit. Hence we deduce that $\psi_T$ has stable reduction over $K$, which is a contradiction.

\end{proof}

\begin{rem}
Unlike abelian varieties (Corollaire 3.8 in chapter IX of \cite{SGA7}), stable bad reduction of Drinfeld module over local field does not imply the inertia group acts on Tate module via unipotent matrices.
The following is a counterexample:

Let $\fp=(T)$ and $\phi_T=T+\tau+T\tau^2$ be the Drinfeld $A$-module defined over $F_\fp$. It's clear that $\phi$ has stable bad reduction of rank $1$. For any prime $\fl=(T-c)\neq \fp$, the Tate uniformization shows that the inertia group $I_\fp$ acts on $T_\fl(\phi)$ via matrices of the form 
$$\left(\begin{array}{c|c}1 & * \\\hline 0 & c\end{array}\right).$$
We know $\det\circ\rho_{\phi,\fl}(I_\fp)=\rho_{\psi,\fl}(I_\fp)$, where $\psi$ is the rank-$1$ Drinfeld module $\psi_T=T-T\tau$ by Proposition 7.1 in \cite{Hei03}. Now we claim that $\rho_{\psi,\fl}(I_T)$ is nontrivial. We observe the Newton polygon of $\psi_{\fl}(x)/x$ with respect to the valuation $v_\fp$, the polygon is a single line with slope equal to $\frac{1}{q-1}$. Hence the Galois extension $F_\fp(\psi[\fl])/F_\fp$ ramifies. This implies the inertia group $I_T$ acts nontrivially on $\psi[\fl]$, hence acts nontrivially on $T_\fl(\psi)$ as well. Therefore, $\det\circ\rho_{\phi,\fl}(I_\fp)=\rho_{\psi,\fl}(I_\fp)$ is nontrivial, so $c\neq 1$.

\end{rem}

\subsection{Determinant of $\rho_{\phi}$}
Let $\phi$ be a Drinfeld module over $F$ defined by $$\phi_T=T+g_1\tau+g_2\tau^2+\cdots+g_r\tau^r.$$ 
Let $\mathfrak{p} \neq \mathfrak{l}$ be a prime of good reduction of $\phi$, the $\mathfrak{l}$-adic Galois representation ${\rho}_{\phi,\mathfrak{l}}$ is unramified at $\mathfrak{p}$ by Proposition \ref{prop3}. Therefore, the matrix ${\rho}_{\phi,\mathfrak{l}}({\rm{Frob}}_{\mathfrak{p}}) \in {\rm{GL}}_r(A_\mathfrak{l})$ is well-defined up to conjugation, so we can consider the characteristic polynomial $P_{\phi,\mathfrak{p}}(x)=\det(xI-{\rho}_{\phi,\mathfrak{l}}({\rm{Frob}}_{\mathfrak{p}}))$ of the Frobenius element ${\rm{Frob}_{\mathfrak{p}}}$.\\
The polynomial $P_{\phi,\mathfrak{p}}(x)$ has coefficients in $A$ which are independent of the choice of $\mathfrak{l}$. 
Moreover, $P_{\phi,\mathfrak{p}}(x)$ is equal to the characteristic polynomial of Frobenius endomorphism of $\phi\otimes\mathbb{F}_{\mathfrak{p}}$ acting on $T_{\mathfrak{l}}(\phi\otimes\mathbb{F}_{\mathfrak{p}})$. We may write $P_{\phi,\mathfrak{p}}(x)$ as follows:
$$P_{\phi,\mathfrak{p}}(x)=a_r+a_{r-1}x+a_{r-2}x^2+\cdots +a_{1}x^{r-1}+x^r\in A[x].$$
The constant term $a_r$ is equal to $(-1)^r\det\circ\bar{\rho}_{\phi,\mathfrak{a}}({\rm{Frob}}_{\mathfrak{p}}).$
\begin{prop}\label{prop3.1}{{\rm{(\cite{JKYu95}, Theorem 1)}}}
For $1\leqslant i\leqslant r$, we have $\deg (a_i)\leqslant\frac{i\cdot\deg (\mathfrak{p})}{r}$.

\end{prop}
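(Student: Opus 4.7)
The plan is to interpret $P_{\phi,\mathfrak{p}}(x)$ as the characteristic polynomial of the Frobenius endomorphism $\pi_{\mathfrak{p}} = \tau^{\deg(\mathfrak{p})}$ of the reduction $\phi \otimes \mathbb{F}_{\mathfrak{p}}$ acting on the Tate module $T_{\mathfrak{l}}(\phi \otimes \mathbb{F}_{\mathfrak{p}})$, and then deduce the degree bound from a Weil-type estimate on the archimedean absolute values of its roots.

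First I would observe that since $\mathfrak{p}$ is a prime of good reduction and $\mathfrak{l} \neq \mathfrak{p}$, the Tate module $T_{\mathfrak{l}}(\phi \otimes \mathbb{F}_{\mathfrak{p}})$ is free of rank $r$ over $A_{\mathfrak{l}}$ by Proposition \ref{prop0.2}, and $P_{\phi,\mathfrak{p}}(x) \in A[x]$ is the reduced characteristic polynomial of $\pi_{\mathfrak{p}}$. This element is integral over $A$ because the endomorphism ring $\mathrm{End}(\phi \otimes \mathbb{F}_{\mathfrak{p}})$ is a finitely generated $A$-module. Writing $P_{\phi,\mathfrak{p}}(x) = \prod_{j=1}^{r}(x - \alpha_j)$ over $F^{\mathrm{alg}}$, the coefficient $a_i$ is (up to sign) the $i$-th elementary symmetric polynomial $e_i(\alpha_1, \ldots, \alpha_r)$.

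The crucial step is to establish that every root $\alpha_j$ satisfies $|\alpha_j|_{\infty} = q^{\deg(\mathfrak{p})/r}$ at the infinite place of $F$, and hence at every extension of that place to $F^{\mathrm{alg}}$. To prove this I would exploit the structure of the endomorphism algebra $D := \mathrm{End}(\phi \otimes \mathbb{F}_{\mathfrak{p}}) \otimes_A F$. The Frobenius $\pi_{\mathfrak{p}}$ generates a subfield $F(\pi_{\mathfrak{p}}) \subseteq D$, and $D$ is a central simple algebra over $F(\pi_{\mathfrak{p}})$ whose nontrivial Brauer invariants are supported on the places lying above $\mathfrak{p}$ and above $\infty$. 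A Newton polygon computation, equivalently an analysis of the rigid analytic uniformization of $\phi$ at $\infty$ together with the fact that $\pi_{\mathfrak{p}}$ multiplies lattice covolumes by $q^{\deg(\mathfrak{p})}$, pins down the valuation of $\pi_{\mathfrak{p}}$ at every place above $\infty$ to be $-\deg(\mathfrak{p})/r$ after suitable normalization. This forces the uniform estimate $|\alpha_j|_{\infty} = q^{\deg(\mathfrak{p})/r}$ for all $j$.

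Once this archimedean bound is in hand, the conclusion is immediate: for each $1 \leqslant i \leqslant r$ we have
$$|a_i|_{\infty} = |e_i(\alpha_1, \ldots, \alpha_r)|_{\infty} \leqslant \binom{r}{i} q^{i\deg(\mathfrak{p})/r},$$
so $\deg_T(a_i) \leqslant i\deg(\mathfrak{p})/r$. The hard part is precisely the archimedean estimate on the $\alpha_j$; this is the function-field analogue of the Riemann Hypothesis for abelian varieties over finite fields. In the abelian variety setting one uses positivity of the Rosati involution, while in the Drinfeld module setting one exploits the Brauer invariants of $D$ together with the rigid analytic uniformization at $\infty$ to force equal valuations at all places of $F(\pi_{\mathfrak{p}})$ above $\infty$, which is the substantive content of Yu's theorem.
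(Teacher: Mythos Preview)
The paper does not supply its own proof of this proposition; it simply cites it as Theorem~1 of \cite{JKYu95}. Your sketch correctly identifies the structure of Yu's argument: interpret $P_{\phi,\mathfrak{p}}$ as the characteristic polynomial of the Frobenius endomorphism $\pi_{\mathfrak{p}}$, establish the uniform estimate $|\alpha_j|_\infty = q^{\deg(\mathfrak{p})/r}$ for every root via the structure of the endomorphism algebra and the place at infinity, and read off the coefficient bounds from the elementary symmetric functions.

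One correction worth flagging: you repeatedly describe the place $\infty$ of $F=\mathbb{F}_q(T)$ as ``archimedean,'' but it is non-archimedean. This is not merely terminological---it affects your final inequality. The bound you wrote, $|a_i|_\infty \leqslant \binom{r}{i} q^{i\deg(\mathfrak{p})/r}$, would only give $\deg_T(a_i) \leqslant i\deg(\mathfrak{p})/r + \log_q\binom{r}{i}$, which is not quite the statement. Since $|\cdot|_\infty$ is in fact ultrametric, the correct estimate is
\[
|a_i|_\infty = |e_i(\alpha_1,\ldots,\alpha_r)|_\infty \leqslant \max_{\substack{S\subseteq\{1,\ldots,r\}\\|S|=i}} \prod_{j\in S}|\alpha_j|_\infty = q^{i\deg(\mathfrak{p})/r},
\]
and the desired bound $\deg_T(a_i)\leqslant i\deg(\mathfrak{p})/r$ follows directly.
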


\begin{prop}\label{prop4}{{\rm{(\cite{HsYu00}, p.268)}}} 
$$a_r=\epsilon(\phi)\cdot\mathfrak{p}.$$
Here $\epsilon(\phi)=(-1)^r(-1)^{{{\rm{deg}}_T\mathfrak{p}}(r+1)}{\rm{Nr}}_{\mathbb{F}_{\mathfrak{p}}/\mathbb{F}_q}(g_r)^{-1}$, which belongs to $\mathbb{F}_{q}^*$ and is independent of $\mathfrak{a}$.

\end{prop}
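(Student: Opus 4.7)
The plan is to identify the determinant $\det\circ\rho_{\phi,\mathfrak{l}}$ with the $\mathfrak{l}$-adic Galois representation attached to a rank-$1$ Drinfeld module derived from $\phi$, then invoke Hayes' explicit class field theory (Proposition \ref{prop1}) to evaluate it at $\mathrm{Frob}_\mathfrak{p}$. Since $a_r=(-1)^r\det\circ\rho_{\phi,\mathfrak{l}}(\mathrm{Frob}_\mathfrak{p})$ as already noted before the statement, the proof reduces entirely to computing the determinant character.

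The first step is Anderson's determinant construction. To the rank-$r$ $A$-motive $M(\phi)=F\{\tau\}$ with $T$-action $m\mapsto m\cdot\phi_T$, I would attach the top exterior power $\wedge^r M(\phi)$; this is a rank-$1$ $A$-motive corresponding to a rank-$1$ Drinfeld module $\det\phi$, and functoriality of the Tate module construction gives $\det\circ\rho_{\phi,\mathfrak{l}}=\rho_{\det\phi,\mathfrak{l}}$. A direct calculation of the action of $T$ on the wedge $1\wedge\tau\wedge\cdots\wedge\tau^{r-1}$ then yields $(\det\phi)_T=T+c\tau$ with $c=(-1)^{r-1}g_r$, up to isomorphism over $F$.

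Next, I would compare $\det\phi$ to the Carlitz module. Choosing $\alpha\in F^{\mathrm{sep}}$ with $\alpha^{q-1}=c$, multiplication by $\alpha$ is an isomorphism $\det\phi\to C$ over $F(\alpha)$; tracking the Galois action shows $\rho_{\det\phi,\mathfrak{l}}(\sigma)=\chi(\sigma)^{-1}\rho_{C,\mathfrak{l}}(\sigma)$, where $\chi:G_F\to\mathbb{F}_q^\times$ is the character $\chi(\sigma)=\sigma(\alpha)/\alpha$. At a Frobenius $\mathrm{Frob}_\mathfrak{p}$ of good reduction, the identity $\bar\alpha^{q-1}=\bar c$ gives $\chi(\mathrm{Frob}_\mathfrak{p})=\bar\alpha^{|\mathbb{F}_\mathfrak{p}|-1}=\bar c^{(|\mathbb{F}_\mathfrak{p}|-1)/(q-1)}=\mathrm{Nr}_{\mathbb{F}_\mathfrak{p}/\mathbb{F}_q}(\bar c)$; substituting $c=(-1)^{r-1}g_r$ and using $(-1)^{(r-1)\deg_T\mathfrak{p}}=(-1)^{(r+1)\deg_T\mathfrak{p}}$ produces $\chi(\mathrm{Frob}_\mathfrak{p})^{-1}=(-1)^{(r+1)\deg_T\mathfrak{p}}\mathrm{Nr}_{\mathbb{F}_\mathfrak{p}/\mathbb{F}_q}(g_r)^{-1}$.

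Combining with Proposition \ref{prop1} ($\rho_C(\mathrm{Frob}_\mathfrak{p})\equiv\mathfrak{p}$) and the $(-1)^r$ from $a_r=(-1)^r\det$, one recovers $a_r=\epsilon(\phi)\cdot\mathfrak{p}$ with precisely the stated $\epsilon(\phi)$; independence of $\mathfrak{a}$ is automatic since the computation is valid for every $\mathfrak{l}$. The main obstacle is the careful book-keeping of signs throughout the wedge-product construction and the twist from $\det\phi$ to $C$: the $(-1)^{r-1}$ in $c$, the $(-1)^r$ from $a_r=(-1)^r\det$, and the parity from the $\mathbb{F}_q$-norm of a $\pm$-sign must all combine to give the precise factor $(-1)^r(-1)^{(r+1)\deg_T\mathfrak{p}}$.
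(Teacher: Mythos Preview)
The paper does not give its own proof of this proposition: it is simply quoted from \cite{HsYu00}, so there is nothing in the paper to compare your argument against. That said, your sketch is correct and is essentially the argument one finds in the literature (and which the paper itself invokes later via \cite{Hei03}, Proposition~7.1, for the particular $\phi$ under study): pass to the rank-$1$ determinant Drinfeld module $(\det\phi)_T=T+(-1)^{r-1}g_r\,\tau$, twist to the Carlitz module by a $(q-1)$-st root $\alpha$ of $(-1)^{r-1}g_r$, and read off $\chi(\mathrm{Frob}_\mathfrak{p})=\mathrm{Nr}_{\mathbb{F}_\mathfrak{p}/\mathbb{F}_q}\bigl((-1)^{r-1}g_r\bigr)$ from Hayes' theory. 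The sign bookkeeping you flag is exactly the delicate point, and you have tracked it correctly; note in particular that the twist character $\chi$ is unramified at $\mathfrak{p}$ precisely because good reduction of $\phi$ forces $g_r$ to be a $\mathfrak{p}$-unit, so the evaluation $\chi(\mathrm{Frob}_\mathfrak{p})=\bar\alpha^{\,|\mathbb{F}_\mathfrak{p}|-1}$ is legitimate.
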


Let $\phi$ be the Drinfeld module over $F$ defined by $\phi_T=T+\tau^{r-1}+T^{q-1}\tau^r$, where $r$ is an odd prime. By \cite{Hei03} Proposition 7.1, we have 
$$\det\circ\bar{\rho}_{\phi,\mathfrak{a}}=\bar{\rho}_{\psi,\mathfrak{a}},$$
where $\psi$ is defined by $\psi_T=T+T^{q-1}\tau$. It's clear that $\psi$ is isomorphic over $F$ to the Carlitz module $C_T=T+\tau$, so we have the following commutative diagram:
$$
\begin{array}{ccc}
G_F &\xrightarrow{\bar{\rho}_{\phi,\mathfrak{l}}} & {\rm{GL}}_r(\mathbb{F}_{\mathfrak{l}}) \\
\|      &                         &            \downarrow\det         \\
G_F &\xrightarrow{\bar{\rho}_{C,\mathfrak{l}}} & (A/\mathfrak{l})^*
\end{array}.
$$
Hence we can deduce the following Corollary from Proposition \ref{prop1}.
\begin{cor}\label{cor5}
For prime ideals $\mathfrak{p}$ of $A$ such that $\mathfrak{p} \not|\ \mathfrak{a}$, we have $\det\circ\bar{\rho}_{\phi,\mathfrak{a}}({\rm{Frob}}_\mathfrak{p})\equiv \mathfrak{p} \mod \mathfrak{a}$.
\end{cor}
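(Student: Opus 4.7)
The plan is that this corollary is essentially an immediate consequence of the commutative diagram displayed just above it, extended from a prime $\mathfrak{l}$ to an arbitrary ideal $\mathfrak{a}$. First, I would invoke the identification $\det\circ\bar{\rho}_{\phi,\mathfrak{a}}=\bar{\rho}_{\psi,\mathfrak{a}}$, where $\psi_T=T+T^{q-1}\tau$, which is Heinemann's Proposition 7.1 in \cite{Hei03}; this is stated at the level of Drinfeld modules, so it gives the identification for every ideal $\mathfrak{a}$ simultaneously (alternatively one can bootstrap from the prime case via the Chinese Remainder Theorem decomposition of $\phi[\mathfrak{a}]$ into prime-power components).

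Second, I would exhibit an explicit isomorphism $\psi\cong C$ over $F$. Seeking $u:C\to\psi$ of the form $u=c\in F^{\times}$, the defining relation $u\cdot C_T=\psi_T\cdot u$ forces $c^{q-1}=T^{-(q-1)}$, so the choice $c=1/T$ works; a direct computation gives
\[
\tfrac{1}{T}\cdot C_T \;=\; 1+\tfrac{1}{T}\tau \;=\; \psi_T\cdot \tfrac{1}{T}.
\]
This isomorphism induces a $G_F$-equivariant $A$-linear bijection $C[\mathfrak{a}]\to\psi[\mathfrak{a}]$, so $\bar{\rho}_{\psi,\mathfrak{a}}$ and $\bar{\rho}_{C,\mathfrak{a}}$ are conjugate; in particular they agree on every Frobenius element.

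Third, I would apply Hayes' theorem (Proposition \ref{prop1}), which gives $\bar{\rho}_{C,\mathfrak{a}}(\mathrm{Frob}_{\mathfrak{p}})\equiv \mathfrak{p}\bmod\mathfrak{a}$ for all primes $\mathfrak{p}\nmid\mathfrak{a}$. Chaining the identifications $\det\circ\bar{\rho}_{\phi,\mathfrak{a}}=\bar{\rho}_{\psi,\mathfrak{a}}=\bar{\rho}_{C,\mathfrak{a}}$ then yields the desired congruence. I do not anticipate any genuine obstacle here: the substantive input is Heinemann's Proposition 7.1, and the only computational content is the one-line verification that $\psi\cong C$ is defined over $F$ rather than only over a larger field.
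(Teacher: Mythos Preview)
Your proposal is correct and follows essentially the same approach as the paper: invoke Heinemann's Proposition~7.1 to identify $\det\circ\bar{\rho}_{\phi,\mathfrak{a}}$ with $\bar{\rho}_{\psi,\mathfrak{a}}$, note that $\psi$ is $F$-isomorphic to the Carlitz module $C$, and then apply Hayes' result (Proposition~\ref{prop1}). The only difference is that you supply the explicit isomorphism $u=1/T:C\to\psi$ and verify it, whereas the paper simply asserts the isomorphism exists over $F$.
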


\subsection{Drinfeld-Tate uniformization}
Let $\phi: A\longrightarrow A_{\mathfrak{p}}\{\tau\}$ be a Drinfeld module. A \textbf{$\phi$-lattice} is a finitely generated free $A$-submodule of $^{\phi}F_{\mathfrak{p}}^{{\rm{sep}}}$ and stable under $G_{F_{\mathfrak{p}}}$-action. Here the discreteness is with respect to the topology of the local field $F_{\mathfrak{p}}^{{\rm{sep}}}$.\\
A \textbf{Tate datum} over $A_{\mathfrak{p}}$ is a pair $(\phi,\Gamma)$ where $\phi$ is a Drinfeld module over $A_{\mathfrak{p}}$ and $\Gamma$ is a $\phi$-lattice. Two pairs $(\phi,\Gamma)$ and $(\phi',\Gamma')$ of Tate datum are \textbf{isomorphic} if there is an isomophism from $\phi$ to $\phi'$ such that the induced $A$-module homomorphism $^{\phi}F_{\mathfrak{p}}^{{\rm{sep}}}\longrightarrow {^{\phi'}F_{\mathfrak{p}}^{{\rm{sep}}}}$ gives an $A$-module isomorphism $\Gamma \longrightarrow \Gamma'$.
\begin{prop}\label{prop6}{{\rm{(Drinfeld)}}}
{{Let $r_1$, $r_2$ be two positive integers. There is a one-to-one correspondence between two sets:}}

\begin{enumerate}
{\item The set of $F_{\mathfrak{p}}-$isomorphism classes of Drinfeld modules $\phi$ over $F_{\mathfrak{p}}$ of rank $r:=r_1+r_2$ with stable reduction of rank $r_1$
\item The set of $F_{\mathfrak{p}}-$isomorphism classes of Tate datum $(\psi,\Gamma)$ where $\psi$ is a Drinfeld module over $A_{\mathfrak{p}}$ of rank $r_1$ with good reduction, and $\Gamma$ is a $\psi$-lattice of rank $r_2$.
}
\end{enumerate}
\end{prop}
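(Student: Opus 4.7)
The plan is to adapt Tate's analytic uniformization of elliptic curves with multiplicative reduction to the Drinfeld setting: a rank-$r$ Drinfeld module $\phi$ over $F_\fp$ with stable reduction of rank $r_1$ should arise as the analytic quotient of a rank-$r_1$ module $\psi$ (with good reduction) by a discrete $A$-lattice of rank $r_2 := r - r_1$ sitting inside $^\psi F_\fp^\sep$. After base-changing to the completion of $F_\fp^\alg$ under the unique extension of $v_\fp$, I would construct the two maps separately and verify they are mutually inverse on $F_\fp$-isomorphism classes; Galois-stability of the lattice and Galois-equivariance of the analytic maps then allow descent back to $F_\fp$.

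For the direction $(2)\Rightarrow(1)$, given a Tate datum $(\psi,\Gamma)$ I would form the $\F_q$-linear entire function
$$e^\Gamma(z) \;=\; z \prod_{0\neq \lambda\in\Gamma}\Bigl(1 - \frac{z}{\lambda}\Bigr),$$
whose convergence on every disk follows from the discreteness of $\Gamma$. Because $\psi_a(\Gamma)\subseteq\Gamma$ for each $a\in A$, there is a unique $\phi_a\in F_\fp^\sep\{\tau\}$ satisfying the functional equation $e^\Gamma\circ\psi_a = \phi_a\circ e^\Gamma$, and the assignment $a\mapsto \phi_a$ assembles into a Drinfeld module of rank $r_1+r_2$, defined over $F_\fp$ by Galois-invariance of $\Gamma$, and having stable reduction isomorphic to $\bar\psi$ by construction (the leading $\tau^{r_1}$-term of $\phi_T$ agrees with that of $\psi_T$, while the higher $\tau$-coefficients encode the lattice).

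For the converse $(1)\Rightarrow(2)$, I would assume $\phi$ is defined over $A_\fp$ and read off from the Newton polygon of $\phi_T(x)/x$ a distinguished rank-$r_1$ ``unit part'' which, after a suitable change of variable, defines a Drinfeld module $\psi$ over $A_\fp$ with good reduction matching $\bar\phi$. I would then construct a formal power series $e(z) = z + \cdots$ satisfying $\phi_T\circ e = e\circ \psi_T$ by successive approximation of coefficients, and identify $\Gamma := \ker(e)$ as the required lattice. The key technical point, and the main obstacle I anticipate, is showing that this formal exponential actually converges on all of $F_\fp^\alg$: one needs sharp coefficient estimates that come from the slopes of the Newton polygon of $\phi_T$, together with the stable-reduction hypothesis. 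Once convergence is established, discreteness and Galois-stability of $\Gamma$ are automatic, the rank count $\mathrm{rk}_A(\Gamma) = r_2$ follows by comparing $\fl$-adic Tate modules for any prime $\fl$ distinct from the $A$-characteristic of $\bar\phi$, and the mutual-inverse check is formal since the uniqueness of the formal exponential forces the pair $(\psi,\Gamma)$ to be an intrinsic invariant of the $F_\fp$-isomorphism class of $\phi$.
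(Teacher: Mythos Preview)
The paper does not prove this proposition at all: its entire proof is the single line ``See chapter 4 in \cite{Leh09}.'' So there is nothing in the paper to compare your argument to beyond a citation.

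That said, your outline is essentially the standard argument one finds in the cited reference (and in Drinfeld's original paper): from a Tate datum $(\psi,\Gamma)$ one forms the lattice exponential $e^\Gamma$ and reads off $\phi$ from the functional equation $e^\Gamma\circ\psi_a=\phi_a\circ e^\Gamma$; conversely, from $\phi$ with stable reduction one extracts $\psi$ from the integral part of the Newton polygon, solves recursively for a power series $u\in A_\fp\{\{\tau\}\}$ with $u\psi_T=\phi_Tu$ (this is exactly the $u$ appearing in the paper's Remark~\ref{rem7}(ii)), proves convergence via the slope estimates coming from stable reduction, and sets $\Gamma=\ker u$. Your identification of the convergence of the exponential as the main technical point is accurate, and your rank count via Tate modules is the right way to finish. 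The only minor imprecision is in the direction $(1)\Rightarrow(2)$: one does not literally ``change variables'' to produce $\psi$; rather one truncates $\phi_T$ at the first unit coefficient (after passing to an integral model) and then corrects the higher terms recursively so that $\psi$ lands in $A_\fp\{\tau\}$ with good reduction. But this is a matter of phrasing, not substance.
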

\begin{proof}
See chapter 4 in \cite{Leh09}.
\end{proof}
\begin{rem}\label{rem7}\ \\
{\rm{Fix $a\in A-\mathbb{F}_{q}$. From the proof of Proposition \ref{prop6}, we have the following two properties :
\begin{enumerate}
\item[(i)]
There is a $G_{F_{\mathfrak{p}}}$-equivariant short exact sequence of $A$-modules:
$$0\longrightarrow \psi[a]\longrightarrow \phi[a] \xrightarrow{\psi_a}\Gamma/a\Gamma \longrightarrow 0.$$
\item[(ii)]
There is an element $u\in  {A_{\mathfrak{p}}\{\{\tau \}\}}$ such that $u\psi_a=\phi_au$, here ${A_{\mathfrak{p}}\{\{\tau \}\}}$ is the set of power series in $\tau$ with coefficients in $A_{\mathfrak{p}}$.
This element $u$ induces an isomorphism of $A[G_{F_{\mathfrak{p}}}]$-modules from $\psi_a^{-1}(\Gamma)/\Gamma$ to $\phi[a]$ by mapping $z+\Gamma$ to $u(z)$.

Moreover, the function $u$ can be expressed in the following ways 

\begin{enumerate}
\item[1.] $u(x)=x+u_1x^q+u_2x^{q^2}+\cdots+u_ix^{q^i}+\cdots$, and $u_i$ belongs to the maximal ideal of $A_{\mathfrak{p}}$ for all $i$. 

\item[2.] When $r_2=1$, $u(x)=x\cdot\displaystyle  \prod_{0\neq a\in A}(1-\frac{x}{\phi_a(\gamma)})$ where $\gamma$ is a generator of $\Gamma$.

\end{enumerate}

\end{enumerate}

}}
\end{rem}

\section{Image of $\bar{\rho}_{\phi,\mathfrak{l}}$ and Aschbaher's Theorem}
From now on, we work under the following assumptions:

Let $r$ be a prime number, $A=\mathbb{F}_q[T]$, and $F=\mathbb{F}_q(T)$, where $q=p^e$ and $p>r!$. Let $\phi$ be a Drinfeld $A$-module over $F$ of rank $r$ with generic characteristic defined by $\phi_T=T+\tau^{r-1}+T^{q-1}\tau^r$. Let $\mathfrak{l}$ be a place of $F$ where $\phi$ has good reduction at $\mathfrak{l}$. We denote $\bar{\rho}_{\phi,\mathfrak{l}}(G_F)$ by $\Gamma_\mathfrak{l}$, so $\Gamma_\mathfrak{l}$ is a subgroup of ${\rm{GL}}_r(\mathbb{F}_{\mathfrak{l}})$

By Aschbacher's Theorem(Theorem \ref{asch} in Appendix B), $\Gamma_\mathfrak{l}$ lies in one of the Aschbacher classes. In this subsection, we'll show that classes $\mathcal{C}_2$, $\mathcal{C}_3$, $\mathcal{C}_4$, $\mathcal{C}_7$, and $\mathcal{C}_8$ can be ruled out.

\begin{lem}\label{lem22-1}
There is a basis of $\phi[\mathfrak{l}]$ such that 
$$\bar{\rho}_{\phi,\mathfrak{l}}(I_T) \subseteq \left\{\left(\begin{array}{cccc}1 &  &  & b_{1} \\ & \ddots &  & \vdots \\ &  & \ddots & b_{r-1} \\ &  &  & 1\end{array}\right),\ b_{i}\in \mathbb{F}_\mathfrak{l}\ \forall\ 1\leqslant\ i\leqslant\ r-1\right\}.$$ 
\end{lem}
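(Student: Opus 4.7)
The plan is to exploit the Drinfeld--Tate uniformization at the place $(T)$ to produce a basis of $\phi[\fl]$ in which $I_T$ acts through a block-upper-triangular shape, and then to verify that both diagonal blocks act trivially on inertia.

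First I observe that $\phi_T = T + \tau^{r-1} + T^{q-1}\tau^r$ reduces modulo $T$ to $\tau^{r-1}$, whose $\tau$-degree is $r-1\geqslant 1$, so $\phi$ has stable reduction of rank $r-1$ at $(T)$. Drinfeld--Tate uniformization (Proposition \ref{prop6}) then furnishes a Tate datum $(\psi,\Gamma)$ over $A_{(T)}$, with $\psi$ a rank-$(r-1)$ Drinfeld module of good reduction and $\Gamma$ a $\psi$-lattice of rank $r_2=1$. Writing $\Gamma=A\gamma$, the $A$-linearity of the $G_{F_{(T)}}$-action on $\Gamma$ combined with $\Aut_A(A)=\F_q^{\times}$ forces this action to be a character $\chi:G_{F_{(T)}}\to \F_q^{\times}\hookrightarrow \F_{\fl}^{\times}$.

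The $G_{F_{(T)}}$-equivariant short exact sequence from Remark \ref{rem7}(i),
$$0\longrightarrow \psi[\fl]\longrightarrow \phi[\fl]\longrightarrow \Gamma/\fl\Gamma\longrightarrow 0,$$
lets me adapt a basis to the filtration: pick an $\F_{\fl}$-basis $e_1,\dots,e_{r-1}$ of $\psi[\fl]$ (free of rank $r-1$ over $A/\fl$ by Proposition \ref{prop0.2}, since $\psi$ has generic characteristic) and a lift $e_r\in\phi[\fl]$ of a generator of $\Gamma/\fl\Gamma$. Then every $\sigma\in G_{F_{(T)}}$ acts by
$$\bar{\rho}_{\phi,\fl}(\sigma)=\begin{pmatrix}\bar{\rho}_{\psi,\fl}(\sigma) & \mathbf{v}(\sigma) \\ 0 & \chi(\sigma)\end{pmatrix}.$$

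Restricting to $\sigma\in I_T$: good reduction of $\psi$ together with Proposition \ref{prop3} gives $\bar{\rho}_{\psi,\fl}(I_T)=1$, so the upper-left block becomes $I_{r-1}$. To kill the bottom-right entry I use the global determinant identity $\det\bar{\rho}_{\phi,\fl}=\bar{\rho}_{\psi_1,\fl}$ from the discussion preceding Corollary \ref{cor5}, where $\psi_1=T+T^{q-1}\tau$. Since $\psi_1$ is isomorphic over $F$ (via multiplication by $T$) to the Carlitz module $C=T+\tau$, and $C$ has good reduction at $(T)$, the character $\bar{\rho}_{\psi_1,\fl}=\bar{\rho}_{C,\fl}$ is unramified at $T$. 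Thus $\det\bar{\rho}_{\phi,\fl}|_{I_T}=1$. Taking determinants of the displayed block matrix gives $\det\bar{\rho}_{\phi,\fl}=\det\bar{\rho}_{\psi,\fl}\cdot\chi$, and both factors are trivial on $I_T$, whence $\chi|_{I_T}=1$. For $\sigma\in I_T$ the matrix therefore collapses to $\begin{pmatrix}I_{r-1} & \mathbf{v}(\sigma) \\ 0 & 1\end{pmatrix}$, matching the asserted form with $b_i=v_i(\sigma)$.

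The main potential obstacle is the vanishing of $\chi$ on $I_T$; the global determinant route above sidesteps a direct analysis of the period. A more intrinsic alternative would be a Newton-polygon computation on $\phi_T(x)$ (whose $v_{(T)}$-slopes are $-\tfrac{1}{q^{r-1}-1}$ and $\tfrac{1}{q^{r-1}}$) to show $v_{(T)}(\gamma)\in\mathbb{Z}$, from which the cyclic extension $F_{(T)}(\gamma)/F_{(T)}$ cut out by $\chi$, of order dividing $q-1$ and hence coprime to $p$, would be unramified by Kummer theory.
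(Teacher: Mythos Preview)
Your proof is correct and follows essentially the same approach as the paper: Tate uniformization at $(T)$ produces the filtration $0\to\psi[\fl]\to\phi[\fl]\to\Gamma/\fl\Gamma\to 0$, good reduction of $\psi$ kills the upper-left block on inertia, and the identification $\det\bar\rho_{\phi,\fl}=\bar\rho_{C,\fl}$ together with good reduction of the Carlitz module at $(T)$ forces $\chi|_{I_T}=1$. The only cosmetic difference is that the paper works with the explicit model $\psi_\fl^{-1}(\Gamma)/\Gamma\cong\phi[\fl]$ from Remark~\ref{rem7}(ii), choosing a concrete preimage $z$ of the lattice generator $\gamma$ under $\psi_\fl$; this sets up the valuation computations needed in the next lemma, but for the present statement your use of the short exact sequence from Remark~\ref{rem7}(i) is equally valid.
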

\begin{proof}
As $\phi_T=T+\tau^{r-1}-T^{q-1}\tau^{r}$ has stable reduction at $(T)$ of rank $r-1$, we may use Tate uniformization to obtain a Tate datum $(\psi,\Gamma)$. Here $\psi$ has good reduction of rank $r-1$ and $\Gamma$ has rank $1$. The Drinfeld module $\psi:A\rightarrow A_{(T)}\{\tau\}$ has rank $r-1$ of good reduction, so the Galois representation $\bar{\rho}_{\psi,\mathfrak{l}}: G_{F_{(T)}} \rightarrow {\rm{Aut}}(\psi[\mathfrak{l}])$ is unramified. Thus there is a basis $\{w_1,w_2,\cdots,w_{r-1}\}$ of $\psi[\mathfrak{l}]$ such that $\sigma(w_i)=w_i\ \forall \sigma\in I_T,\ \forall\ 1\leqslant i\leqslant r-1$.\\
Now since $\Gamma$ is a free $A$-module of rank $1$, we may fix a generator $\gamma$ of $\Gamma$. Choose $z\in F_{(T)}^{\rm{sep}}$ such that $\psi_{\mathfrak{l}}(z)=\gamma$. The fact  that $\Gamma$ is stable under the Galois action implies that there is a character $\chi_\Gamma:G_{F_{(T)}} \rightarrow \mathbb{F}_q^*$ such that $\sigma(\gamma)=\chi_\Gamma(\sigma)\gamma,\ \forall\sigma\in I_T$.
By Remark \ref{rem7}(ii), we have
$$\psi_{\mathfrak{l}}(\sigma(z))=\sigma(\psi_{\mathfrak{l}}(z))=\sigma(\gamma)=\chi_\Gamma(\sigma)\gamma=\chi_\Gamma(\sigma)\psi_{\mathfrak{l}}(z)=\psi_\mathfrak{l}(\chi_\Gamma(\sigma)z) $$
Thus $\sigma(z)-\chi_\Gamma(\sigma)z\in \psi[\mathfrak{l}]$, therefore there are some elements $b_{\sigma,1}, b_{\sigma,2},\cdots, b_{\sigma,r-1}$ in $\mathbb{F}_{\mathfrak{l}}$ such that 
$$\sigma(z)=b_{\sigma,1}w_1+b_{\sigma,2}w_2+\cdots +b_{\sigma,r-1}w_{r-1}+\chi_{\Gamma}(\sigma)z.$$
Therefore, the action of $\sigma\in I_{T}$ on $\psi_\mathfrak{l}^{-1}(\Gamma)/\Gamma$ with respect to the basis $\{w_1+\Gamma, w_2+\Gamma,\cdots,w_{r-1}+\Gamma, z+\Gamma\}$ is of the form $\left(\begin{array}{cccc}1 &  &  & b_{\sigma,1} \\ & \ddots &  & \vdots \\ &  & 1 & b_{\sigma,r-1} \\ &  &  & \chi_\Gamma(\sigma)\end{array}\right)$ . 

Because of our choice of $\phi_T$, we have the determinant of mod $\mathfrak{l}$ Galois representation  $\det\circ\bar{\rho}_{\phi,\mathfrak{l}}$ is equal to the mod $\mathfrak{l}$ Galois representation of the Carlitz module $\bar{\rho}_{C,\mathfrak{l}}$. Hence $\chi_{\Gamma}(\sigma)=1$ for all  $\sigma\in I_{T}$ since the Carlitz module has good reduction at $(T)$. Therefore, we have deduced
$$\bar{\rho}_{\phi,\mathfrak{l}}(I_T) \subseteq \left\{\left(\begin{array}{cccc}1 &  &  & b_{1} \\ & \ddots &  & \vdots \\ &  & \ddots & b_{r-1} \\ &  &  & 1\end{array}\right),\ b_{i}\in \mathbb{F}_\mathfrak{l}\ \text{ for all }1\leqslant\ i\leqslant\ r-1\right\}$$ with respect to the basis $\{w_1+\Gamma, w_2+\Gamma,\cdots, w_{r-1}+\Gamma, z+\Gamma\}$.

\end{proof}

\begin{lem}\label{lem22}
The inclusion in Lemma \ref{lem22-1} is an equality. In particular, $\bar{\rho}_{\phi,\mathfrak{l}}(I_T)$ has order equal to $|\mathbb{F}_{\mathfrak{l}}|^{r-1}$. 
\end{lem}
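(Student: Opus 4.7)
The plan is to combine the upper bound of Lemma \ref{lem22-1} with a matching lower bound coming from ramification. Since $\bar{\rho}_{\phi,\mathfrak{l}}$ factors through the finite Galois extension $F_{(T)}(\phi[\mathfrak{l}])/F_{(T)}$, the image $\bar{\rho}_{\phi,\mathfrak{l}}(I_T)$ is precisely the inertia subgroup of $\Gal(F_{(T)}(\phi[\mathfrak{l}])/F_{(T)})$, so its order equals the ramification index $e(F_{(T)}(\phi[\mathfrak{l}])/F_{(T)})$. Lemma \ref{lem22-1} already gives $e \leq |\mathbb{F}_{\mathfrak{l}}|^{r-1}$, so it suffices to exhibit an element of $\phi[\mathfrak{l}]$ whose $v_{(T)}$-valuation has denominator $|\mathbb{F}_{\mathfrak{l}}|^{r-1}$ in lowest terms, producing the matching lower bound on $e$.

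The technical core is the computation $v_{(T)}(\gamma) = -1$ for the generator $\gamma$ of the $\psi$-lattice $\Gamma$, which I would obtain by comparing two Newton polygons. The Newton polygon of $\phi_T(x)/x = T + x^{q^{r-1}-1} + T^{q-1}x^{q^r-1}$ has its rightmost segment running from $(q^{r-1}-1,0)$ to $(q^r-1,q-1)$ with slope $1/q^{r-1}$, so the $q^{r-1}(q-1)$ roots of $\phi_T$ lying in the lattice direction (those projecting nontrivially to $\Gamma/T\Gamma$ under the exact sequence of Remark \ref{rem7}(i)) all carry valuation $-1/q^{r-1}$. On the other hand, for $z_0 \in \psi_T^{-1}(\gamma)$, the Newton polygon of $\psi_T(x) - \gamma$ is a single segment from $(0, v_{(T)}(\gamma))$ to $(q^{r-1},0)$, since its leading coefficient is a unit by the good reduction of $\psi$; hence $v_{(T)}(z_0) = v_{(T)}(\gamma)/q^{r-1}$. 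Applying the product expansion $u(x) = x\prod_{0 \neq a \in A}(1 - x/\psi_a(\gamma))$ from Remark \ref{rem7}(ii)(2) at $x = z_0$, one has $v_{(T)}(\psi_a(\gamma)) = q^{(r-1)\deg a}\,v_{(T)}(\gamma)$ and therefore $v_{(T)}(z_0/\psi_a(\gamma)) = v_{(T)}(\gamma)\bigl(q^{-(r-1)} - q^{(r-1)\deg a}\bigr) > 0$ for every nonzero $a$. So every factor is a unit and $v_{(T)}(u(z_0)) = v_{(T)}(z_0)$. Equating with the valuation $-1/q^{r-1}$ of $u(z_0) \in \phi[T]$ from the first Newton polygon yields $v_{(T)}(\gamma) = -1$.

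With $v_{(T)}(\gamma) = -1$ secured, the Newton polygon of $\psi_{\mathfrak{l}}(x) - \gamma$ is a single segment from $(0,-1)$ to $(N,0)$ where $N := q^{(r-1)\deg_T \mathfrak{l}} = |\mathbb{F}_{\mathfrak{l}}|^{r-1}$, since both the linear coefficient $\ell$ and the leading coefficient of $\psi_{\mathfrak{l}}$ are $(T)$-units and the intermediate coefficients have nonnegative $v_{(T)}$-valuation. So any root $z$ has $v_{(T)}(z) = -1/N$, and the same unit-factor estimate gives $v_{(T)}(u(z)) = -1/N$ with $u(z) \in \phi[\mathfrak{l}] \subset F_{(T)}(\phi[\mathfrak{l}])$. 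Since $\gcd(1,N) = 1$, the value group of $F_{(T)}(\phi[\mathfrak{l}])$ must contain $1/N$, forcing $e(F_{(T)}(\phi[\mathfrak{l}])/F_{(T)}) \geq N$ and, combined with the upper bound of Lemma \ref{lem22-1}, equality. The main obstacle is the valuation computation $v_{(T)}(\gamma) = -1$: two Newton polygons and the Tate-uniformization product expansion have to interlock, and one must verify that the formal series $u$ does not shift valuations on $z_0$, which is delicate because $z_0$ has negative valuation and so the higher-order $x^{q^i}$ terms of $u$ could a priori dominate.
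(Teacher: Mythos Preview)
Your argument is correct and reaches the same conclusion $v_{(T)}(z)=-1/|\mathbb{F}_{\mathfrak{l}}|^{r-1}$ as the paper, but by a genuinely different route. The paper never isolates $v_{(T)}(\gamma)$; instead it expands $\phi_{\mathfrak{l}}(x)$ via the product formula of Remark~\ref{rem7}(ii), equates the $v_{(T)}$-valuation of the leading coefficient $T^{(q-1)\sum q^{r(i-1)}}$ with the (negative of the) sum of $v_{(T)}(u(\gamma'))$ over all nonzero $\gamma'\in\psi_{\mathfrak{l}}^{-1}(\Gamma)/\Gamma$, and then solves that sum for $v_{(T)}(z)$. Your approach instead first pins down $v_{(T)}(\gamma)=-1$ via the Newton polygon of $\phi_T(x)/x$, and then reads off $v_{(T)}(z)$ directly from the single-segment Newton polygon of $\psi_{\mathfrak{l}}(x)-\gamma$. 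This is cleaner and more conceptual: it separates the $\mathfrak{l}$-independent datum $v_{(T)}(\gamma)$ from the $\mathfrak{l}$-dependent step, and avoids the long valuation sum. The paper's method, on the other hand, does not require analyzing the coefficients of $\psi_T$ or identifying which segment of the Newton polygon of $\phi_T$ corresponds to the lattice direction.

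One small point to tighten: you assert that the $q^{r-1}(q-1)$ roots of $\phi_T(x)/x$ projecting nontrivially to $\Gamma/T\Gamma$ are exactly those with valuation $-1/q^{r-1}$, but you do not justify this identification. The quickest fix is to bypass it: you already know $v_{(T)}(u(z_0))=v_{(T)}(\gamma)/q^{r-1}<0$, and the only negative slope in the Newton polygon of $\phi_T(x)/x$ is $-1/q^{r-1}$, so $v_{(T)}(\gamma)=-1$ follows immediately. Alternatively, one checks that for $w\in\psi[T]\setminus\{0\}$ the product expansion gives $v_{(T)}(u(w))=v_{(T)}(w)>0$, forcing the negative-valuation roots to be precisely the lattice-direction ones.
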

\begin{proof}
So far we have from Lemma \ref{lem22-1} that 

$$\bar{\rho}_{\phi,\mathfrak{l}}(I_T) \subseteq \left\{\left(\begin{array}{cccc}1 &  &  & b_{1} \\ & \ddots &  & \vdots \\ &  & \ddots & b_{r-1} \\ &  &  & 1\end{array}\right),\ b_{i}\in \mathbb{F}_\mathfrak{l}\ \forall\ 1\leqslant\ i\leqslant\ r-1\right\}.$$ 

Let $F^{un}_{(T)}$ be the maximal unramified extension of $F_{(T)}$ in $F^{sep}_{(T)}$. Since $I_T= {\rm{Gal}}(F^{sep}_{(T)}/F^{un}_{(T)})$ by definition, we have $\bar{\rho}_{\phi,\mathfrak{l}}(I_T) \cong {\rm{Gal}}(F^{un}_{(T)}(\phi[\mathfrak{l}])/F^{un}_{(T)})$. By Remark \ref{rem7} and Lemma \ref{lem22-1}, we can derive that $F^{un}_{(T)}(\phi[\mathfrak{l}])=F^{un}_{(T)}(w_1,w_2,\cdots, w_{r-1},z)$. Furthermore, $w_i$ belongs to $F^{un}_{(T)}$ because $\bar{\rho}_{\psi,\mathfrak{l}}: G_{F_{(T)}} \rightarrow {\rm{Aut}}(\psi[\mathfrak{l}])$ is unramified for all $1\leqslant i\leqslant r-1$. Therefore, $F^{un}_{(T)}(\phi[\mathfrak{l}])=F^{un}_{(T)}(z)$ and its ramification index $e[F^{un}_{(T)}(z):F^{un}_{(T)}]$ at least the order of $v(z)$ in $\mathbb{Q}/\mathbb{Z}$ where $v$ is the normalized valuation of $F_{(T)}$. 

 From Remark \ref{rem7}(ii), we have $\phi_{\mathfrak{l}}(x)= \mathfrak{l}x\cdot{\displaystyle  \prod_{0\neq\gamma'\in \psi_{\mathfrak{l}}^{-1}(\Gamma)/\Gamma}}(1-\frac{x}{u(\gamma')})$. We compute the leading coefficients on both sides up to units of $A_{(T)}$:
\begin{equation}
T^{(q-1)( {\displaystyle  \sum_{i=1}^{\deg_T(\mathfrak{l})}q^{r(i-1)} })}=\pm\frac{\mathfrak{l}}{{\displaystyle  \prod_{0\neq\gamma'\in \psi_{\mathfrak{l}}^{-1}(\Gamma)/\Gamma}}u(\gamma')}. 
\end{equation}

As $\{w_1+\Gamma, w_2+\Gamma,\cdots, w_{r-1}+\Gamma, z+\Gamma\}$ is a basis of $\psi_{\mathfrak{l}}^{-1}(\Gamma)/\Gamma$, an element $\gamma'\in \psi_{\mathfrak{l}}^{-1}(\Gamma)/\Gamma$ can be written as a $\mathbb{F}_\mathfrak{l}$- linear combination of $w_i+\Gamma$ and $z+\Gamma$. Thus we have
$$
\begin{array}{ccl}
(q-1)( {\displaystyle  \sum_{i=1}^{\deg_T(\mathfrak{l})}q^{r(i-1)} })&=&v\left(\pm\frac{\mathfrak{l}}{{\displaystyle  \prod_{0\neq\gamma'\in \psi_{\mathfrak{l}}^{-1}(\Gamma)/\Gamma}}u(\gamma')}\right)\\
&=&0-\left( {{\displaystyle  \sum_{a_1,a_2,\cdots,a_{r-1},b\in \mathbb{F}_\mathfrak{l}\ {\rm{not\ all\ zero}}}}}v(u(a_1w_1+a_2w_2+\cdots+a_{r-1}w_{r-1}+bz))\right).
\end{array}
$$
Recall from the proof of Lemma \ref{lem22-1} that we have $\gamma=\psi_\mathfrak{l}(z)$, where $\gamma$ is a generator of the rank $1$ discrete $A$-module $\Gamma$. The discreteness of $\Gamma$ forces $v(\gamma)<0$, which implies $v(z)<0$ because every coefficient of $\psi_\mathfrak{l}(x)$ has nonnegative valuation. Moreover, the valuations $v(w_i)$ are all nonnegative because they are roots of $\psi_\mathfrak{l}(x)$.

The proof of $v(u(w_i))=v(w_i)$ is easy because $v(w_i)$ are non-negative and non-constant coefficients of $u(x)$ lie in the maximal ideal of $A_{(T)}$. 
For the proof of $v(u(z))=v(z)$, we use the chosen $\gamma$ to compare coefficients of two expressions of $u(x)$ in Remark \ref{rem7}(ii). We have 
$$v(u_n)\geqslant -(q^n-1)v(\gamma).$$
We also know that $v(\gamma)=q^{(r-1)(deg_T(\mathfrak{l}))}v(z)$. Hence
$$v(u_nz^{q^n})=v(u_n)+q^nv(z) \geqslant -(q^n-1)v(\gamma)+q^{(n-(r-1)(\deg_T(\mathfrak{l})))}v(\gamma).$$
For $n\geqslant1$, $v(u_nz^{q^n})$ is always non-negative. Therefore, $v(u(z))=v(z)$.

Thus we can compute the valuation of $u(a_1w_1+a_2w_2+\cdots+a_{r-1}w_{r-1}+bz)$ explicitly:
$$v(u(a_1w_1+a_2w_2+\cdots+a_{r-1}w_{r-1}+bz))=\begin{cases}
q^{(r-1)i}v(z), & {\rm{if}}\ b\neq0, \deg_T(b)=i\\
v(a_1w_1+a_2w_2+\cdots+a_{r-1}w_{r-1}), &{\rm{if}}\ b=0
\end{cases}
.$$
Hence we have
$$
\begin{array}{ccl}
(q-1)( {\displaystyle  \sum_{i=1}^{\deg_T(\mathfrak{l})}q^{r(i-1)} })&=&-(q^{(r-1)(\deg_T(\mathfrak{l}))})[(q-1)( {\displaystyle  \sum_{i=1}^{\deg_T(\mathfrak{l})}q^{r(i-1)} })]v(z)\\
&&-v\left({{\displaystyle  \prod_{a_1,a_2,\cdots,a_{r-1}\in \mathbb{F}_\mathfrak{l}\ {\rm{not\ all\ zero}}}}}(a_1w_1+a_2w_2+\cdots+a_{r-1}w_{r-1})\right).
\end{array}
$$
In fact, ${{\displaystyle  \prod_{a_1,a_2,\cdots,a_{r-1}\in \mathbb{F}_q\ {\rm{not\ all\ zero}}}}}(a_1w_1+a_2w_2+\cdots+a_{r-1}w_{r-1})$ is equal to the constant term $\mathfrak{l}$ of $\psi_{\mathfrak{l}}(x)/x$. Finally, we are able to compute the valuation $v(z)=-\frac{ 1 }{q^{(r-1)(\deg_T(\mathfrak{l}))}}$, its order in $\mathbb{Q}/\mathbb{Z}$ is equal to $q^{(r-1)(\deg_T(\mathfrak{l}))}=|A/\mathfrak{l}|^{r-1}$. Therefore, $|A/\mathfrak{l}|^{r-1} \leqslant e[F^{un}_{(T)}(z):F^{un}_{(T)}]$ and so $|\bar{\rho}_{\phi,\mathfrak{l}}(I_T)|\geqslant |A/\mathfrak{l}|^{r-1}$. Combining with Lemma \ref{lem22-1}, we have $|\bar{\rho}_{\phi,\mathfrak{l}}(I_T)|=|A/\mathfrak{l}|^{r-1}$.

 \end{proof}
 
 Now we can apply the Aschbacher's theorem (Theorem \ref{asch} in the Appendix) to rule out certain Aschbacher classes.

\begin{itemize}

\item $\Gamma_\mathfrak{l}$ does not lie in Class $\mathcal{C}_2$. 
\begin{proof}
Suppose $\Gamma_\mathfrak{l}$ lies in $\mathcal{C}_2$, then $\Gamma_\mathfrak{l}$ acting on $\mathbb{F}_\mathfrak{l}^r$ must be of the type ${\rm{GL}}_1(\mathbb{F}_\mathfrak{l})\wr S_r$, the wreath product of $\GL_1$ and the symmetric group $S_r$. Therefore, we have $|\Gamma_\mathfrak{l}|$ divides $|\mathbb{F}_\mathfrak{l}^*|\cdot {r!}$. We then get a contradiction from the fact that $p$ doesn't divide $|\mathbb{F}_\mathfrak{l}^*|\cdot {r!}$.
\end{proof}

\item $\Gamma_\mathfrak{l}$ does not lie in Class $\mathcal{C}_3$
\begin{proof}
Suppose $\Gamma_\mathfrak{l}$ lies in $\mathcal{C}_3$, then the action of $\Gamma_\mathfrak{l}$ on $\mathbb{F}_\mathfrak{l}^r$ must be of the type ${\rm{GL}}_1(\mathbb{F}_{\mathfrak{l}^r})$, here $\F_{\fl^r}$ is a degree-$r$ extension over $\F_\fl$. Thus we have $|\Gamma_\mathfrak{l}|$ divides ${\rm{GL}}_1(\mathbb{F}_{\mathfrak{l}^r})$, which contradicts to the fact that $p$ divides $|\Gamma_\mathfrak{l}|$.
\end{proof}

\item $\Gamma_\mathfrak{l}$ does not lie in Class $\mathcal{C}_4$

\begin{proof}
This is clear by the primality of $r$. Since $\mathbb{F}_\mathfrak{l}^r$ cannot have such tensor product decomposition 
$\mathbb{F}_\mathfrak{l}^r=V_1\otimes V_2$, where $V_1$(resp. $V_2$) is a $\mathbb{F}_\mathfrak{l}$-subspace of $\mathbb{F}_\mathfrak{l}^r$ of dimension $n_1$(resp. $n_2$) and $1<n_1<\sqrt{r}$.
\end{proof}

\item $\Gamma_\mathfrak{l}$ does not lie in Class $\mathcal{C}_7$

\begin{proof}
Suppose $\Gamma_\mathfrak{l}$ lies in $\mathcal{C}_2$, then the action of $\Gamma_\mathfrak{l}$ on $\mathbb{F}_\mathfrak{l}^r$ must be in a quotient of the standard wreath product ${\rm{GL}}_1(\mathbb{F}_\mathfrak{l})\wr S_r$. Hence we still have $|\Gamma_\mathfrak{l}|$ divides $|\mathbb{F}_\mathfrak{l}^*|\cdot {r!}$, a contradiction.

\end{proof}

\item $\Gamma_\mathfrak{l}$ does not lie in Class $\mathcal{C}_8$

\begin{proof}
Suppose $\Gamma_\mathfrak{l}$ lies in $\mathcal{C}_8$, then $\Gamma_\mathfrak{l}$ would preserve 
a non-degenerate classical form on $\mathbb{F}_\mathfrak{l}^r$ up to scalar multiplication. By classical form we mean symplectic form, unitary form or quadratic form. As $r$ is odd, $\mathbb{F}_\mathfrak{l}^r$ can only have unitary form or quadratic form structure. We refer to section 1.5 of \cite{BHR13} for the definitions and properties for classical forms on a vector space.

\begin{itemize}
\item[Case1.] $\Gamma_\mathfrak{l}$ preserves a non-degenerate unitary form on $\mathbb{F}_\mathfrak{l}^r$ up to scalar multiplication.

In this case, we are dealing with unitary form $<\cdot,\cdot>$ on $\mathbb{F}_\mathfrak{l}^r$. There is a basis $\mathcal{B}$ such that $<\cdot,\cdot>$ corresponds to the identity matrix $I_r$( see Proposition1.5.29 in \cite{BHR13}). Let $M\in \Gamma_\mathfrak{l}$ be a matrix with respect to the basis $\mathcal{B}$, the fact that $M$ preserves $<\cdot,\cdot>$ up to a scalar multiplication can be interpreted as the equality:

$$M\cdot M^{\sigma \top}=\lambda_M\cdot I.$$

Where ${\rm{id}} \neq\sigma \in {\rm{Aut}}(\mathbb{F}_\mathfrak{l})$ depends only on the unitary form $<\cdot,\cdot>$, $\sigma^2=1$ , and $\lambda_M \in \mathbb{F}_{\mathfrak{l}}^*$ depends on $M$. 

Therefore, we can compare the characteristic polynomials of such $M$ and $M^{-1}$. As $\lambda_M\cdot  M^{-1}=M^{\sigma \top}$, we have

$$\det(xI-M^{\sigma})=\frac{-x^r}{\det(\lambda_M^{-1}\cdot M)}\det(\frac{1}{x}I-\lambda_M^{-1}\cdot M)=\frac{-x^r}{\det(M)}\det(\frac{\lambda_M}{x}I-M)$$

Now we consider $M=\bar{\rho}_{\phi,\mathfrak{l}}({\rm{Frob}}_{\mathfrak{p}})$ where $\mathfrak{p}=(T-c)\neq (T)\ {\rm{or}}\ \mathfrak{l}$. The characteristic polynomial of $\bar{\rho}_{\phi,T}({\rm{Frob}}_{(T-c)})$ is congruent to $P_{\phi,(T-c)}(x)$ modulo $\fl$. By Proposition \ref{prop4}, we may write 
$$P_{\phi,(T-c)}(x)=-(T-c)+a_{r-1}x+a_{r-2}x^2+\cdots+a_1x^{r-1}+x^r\in A[x].$$ 
Proposition \ref{prop3.1} then implies all the $a_i$'s are belong to $\mathbb{F}_q$. Because $P_{\phi,(T-c)}(x)$ is also the characteristic polynomial of Frobenius endomorphism of $\phi\otimes\mathbb{F}_{\mathfrak{p}}$ acting on $T_{\fl}(\phi\otimes\mathbb{F}_{\mathfrak{p}})$, we have
$$-(\phi\otimes\mathbb{F}_{\mathfrak{p}})_{T-c}+(\phi\otimes\mathbb{F}_{\mathfrak{p}})_{a_{r-1}}\tau+(\phi\otimes\mathbb{F}_{\mathfrak{p}})_{a_{r-2}}\tau^2+\cdots+(\phi\otimes\mathbb{F}_{\mathfrak{p}})_{a_{1}}\tau^{r-1}+\tau^r=0.$$
As $\phi_T=T+\tau^{r-1}+T^{q-1}\tau^r$, we have $(\phi\otimes\mathbb{F}_{\mathfrak{p}})_{T-c}=\tau^{r-1}+\tau^r$. Thus $$a_{r-1}=a_{r-2}=\cdots=a_2=0 \text{ and } a_1=1.$$

Hence the characteristic polynomial of $\bar{\rho}_{\phi,\fl}({\rm{Frob}}_{(T-c)})\in \F_\fl[x]$ is 

$$\bar{P}_{\phi,(T-c)}(x)=x^r+x^{r-1}-\bar{\fp}, \text{ where $\bar{\fp}$ denotes the reduction of $\fp$ modulo $\fl$}.$$

Therefore, we have

$$x^r+x^{r-1}-\sigma(\bar{\mathfrak{p}})=\det(xI-M^{\sigma})=\frac{-x^r}{\det(M)}\det(\frac{\lambda_M}{x}I-M)= x^r-\frac{\lambda_M^{r-1}}{\bar{\mathfrak{p}}}x-\frac{\lambda_M^r}{\bar{\mathfrak{p}}}$$

This is a contradiction because the polynomials on both sides can not be equal.

\item[Case 2] $\Gamma_\mathfrak{l}$ preserves a non-degenerate quadratic form on $\mathbb{F}_\mathfrak{l}^r$ up to scalar multiplication

In this case, we are dealing with quadratic form $<\cdot,\cdot>$ on $\mathbb{F}_\mathfrak{l}^r$. 
There is a basis $\mathcal{B}$ such that $<\cdot,\cdot>$ corresponds to the the $r\times r$ matrix $$A:=\left(\begin{array}{cccc}1 & 0 & 0 & 0 \\0 & \ddots & 0 & 0 \\0 & 0 & 1 & 0 \\0 & 0 & 0 & d\end{array}\right),$$here $d$ is either $1$ or some nonsquare element in $\mathbb{F}_\mathfrak{l}^*$ . Let $M\in \Gamma_\mathfrak{l}$ be a matrix with respect to the basis $\mathcal{B}$, then $M$ preserves $<\cdot,\cdot>$ up to a scalar multiplication can be interpreted into the following equalities:

$$M\cdot A \cdot M^{\top}=\lambda_M\cdot A,$$
where $\lambda_M \in \mathbb{F}_{\mathfrak{l}}^*$ depends on $M$. Thus we have $M^\top=\lambda_M\cdot A^{-1}M^{-1}A$, which implies $M$ and $\lambda_M\cdot M^{-1}$ have the same characteristic polynomial.

$$\det(xI-M)=\frac{-x^r}{\det(\lambda_M^{-1}\cdot M)}\det(\frac{1}{x}I-\lambda_M^{-1}\cdot M)=\frac{-x^r}{\det(M)}\det(\frac{\lambda_M}{x}I-M).$$

Consider $M=\bar{\rho}_{\phi,\mathfrak{l}}({\rm{Frob}}_{\mathfrak{p}})$ where $\mathfrak{p}=(T-c)\neq (T)\ {\rm{or}}\ \mathfrak{l}$, we can deduce

$$x^r+x^{r-1}-\bar{\mathfrak{p}}=\det(xI-M)=\frac{-x^r}{\det(M)}\det(\frac{\lambda_M}{x}I-M)= x^r-\frac{\lambda_M^{r-1}}{\bar{\mathfrak{p}}}x-\frac{\lambda_M^r}{\bar{\mathfrak{p}}}.$$

Hence we have a contradiction.

\end{itemize}
\end{proof}
\end{itemize}

In summary, we've proved the following Proposition:
\begin{prop}\label{rk=prime}
Let $r$ be a prime number, $A=\mathbb{F}_q[T]$, and $F=\mathbb{F}_q(T)$, where $q=p^e$ and $p>r!$. Let $\phi$ be a Drinfeld $A$-module over $F$ of rank $r$ with generic characteristic defined by $\phi_T=T+\tau^{r-1}+T^{q-1}\tau^r$. Let $\mathfrak{l}$ be a place of $F$ where $\phi$ has good reduction at $\mathfrak{l}$. Denote $\bar{\rho}_{\phi,\mathfrak{l}}(G_F)$ by $\Gamma_\mathfrak{l}$.   Then $\Gamma_\mathfrak{l}$ can only lie in Aschbacher classes $\mathcal{C}_1$, $\mathcal{C}_5$,  $\mathcal{C}_6$, or  $\mathcal{S}$. 

\end{prop}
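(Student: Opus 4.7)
The plan is to invoke Aschbacher's theorem and eliminate the unwanted geometric classes $\mathcal{C}_2, \mathcal{C}_3, \mathcal{C}_4, \mathcal{C}_7, \mathcal{C}_8$ one by one. The central group-theoretic input is Lemma \ref{lem22}, which gives $|\bar{\rho}_{\phi,\fl}(I_T)| = |\F_\fl|^{r-1}$, so that $|\Gamma_\fl|$ is divisible by a large power of $p$. On the arithmetic side, the plan is to exploit the very specific form $\phi_T = T + \tau^{r-1} + T^{q-1}\tau^r$, which produces an explicit formula for the characteristic polynomial of Frobenius at the degree-one primes $(T-c)$.

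Classes $\mathcal{C}_2$, $\mathcal{C}_3$, $\mathcal{C}_4$, and $\mathcal{C}_7$ should all fall to cheap counting arguments. Both $\mathcal{C}_2$ and $\mathcal{C}_7$ force $\Gamma_\fl$ into (a quotient of) the wreath product $\GL_1(\F_\fl) \wr S_r$, whose order divides $(|\F_\fl|-1)^r \cdot r!$; since $|\F_\fl|$ is a power of $p$ and $p > r!$, this order is coprime to $p$, contradicting the inertia lower bound. Class $\mathcal{C}_3$ forces $\Gamma_\fl$ into a subgroup of order dividing $|\F_\fl|^r - 1$, again coprime to $p$. Class $\mathcal{C}_4$ requires a tensor decomposition $\F_\fl^r = V_1 \otimes V_2$ with $\dim V_i > 1$, which is impossible for prime $r$.

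The real obstacle is $\mathcal{C}_8$, where $\Gamma_\fl$ preserves a non-degenerate classical form on $\F_\fl^r$ up to scalars. Since $r$ is odd, symplectic forms are excluded and only unitary or orthogonal forms remain. Preservation up to a scalar $\lambda_M$ translates, for any $M \in \Gamma_\fl$, into an identity of the form $M M^{\sigma\top} = \lambda_M I$ or $M A M^\top = \lambda_M A$, which forces a reciprocal-type symmetry between the characteristic polynomials of $M$ and $\lambda_M M^{-1}$ (or its $\sigma$-twist). The plan is then to specialize to $M = \bar{\rho}_{\phi,\fl}(\mathrm{Frob}_{(T-c)})$ for $(T-c) \neq (T), \fl$ and compute the characteristic polynomial directly from Proposition \ref{prop3.1}, Proposition \ref{prop4}, and the identity $(\phi \otimes \F_{(T-c)})_{T-c} = \tau^{r-1} + \tau^r$; this should yield $\bar{P}_{\phi,(T-c)}(x) = x^r + x^{r-1} - \overline{(T-c)}$ modulo $\fl$. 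The hard part will be arguing that this polynomial is incompatible with the reciprocal symmetry: the $x^{r-1}$-coefficient is $1$ while the $x$-coefficient is $0$, and matching these against the scaled reciprocal polynomial forces contradictory constraints on $\lambda_M$ that cannot hold as $c$ varies. Combining all the cases leaves only $\mathcal{C}_1$, $\mathcal{C}_5$, $\mathcal{C}_6$, and $\mathcal{S}$ as possibilities for $\Gamma_\fl$.
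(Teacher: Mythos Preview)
Your proposal is correct and follows essentially the same route as the paper: divisibility of $|\Gamma_\fl|$ by $p$ (via Lemma~\ref{lem22}) kills $\mathcal{C}_2$, $\mathcal{C}_3$, $\mathcal{C}_7$; primality of $r$ kills $\mathcal{C}_4$; and for $\mathcal{C}_8$ one computes $\bar{P}_{\phi,(T-c)}(x)=x^r+x^{r-1}-\overline{(T-c)}$ and compares it with the scaled reciprocal polynomial forced by the form. One small sharpening: in the $\mathcal{C}_8$ step the contradiction is already immediate from a single prime $(T-c)$---the reciprocal polynomial has vanishing $x^{r-1}$-coefficient while $\bar{P}_{\phi,(T-c)}$ has $x^{r-1}$-coefficient equal to $1$---so no variation over $c$ is needed.
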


\section{Some algebraic group theory}
Under the assumption in Proposition \ref{rk=prime}, we know that $\Gamma_\mathfrak{l}$ can only lie in Aschbacher classes $\mathcal{C}_1$, $\mathcal{C}_5$,  $\mathcal{C}_6$, or  $\mathcal{S}$. Now we assume further that $\Gamma_{\mathfrak{l}}$ is irreducible, then $\Gamma_\mathfrak{l}$ can not lie in class $\mathcal{C}_1$. Hence $\Gamma_\mathfrak{l}$ lies in class $\mathcal{C}_5$,  $\mathcal{C}_6$, or  $\mathcal{S}$, which implies $\Gamma_\mathfrak{l}$ contains a subgroup that acts on $\mathbb{F}_\mathfrak{l}^r$ absolutely irreducibly. Hence $\Gamma_\mathfrak{l}$ acts on $\mathbb{F}_\mathfrak{l}^r$ absolutely irreducibly.

For most part, the following arguments are the same as in \cite{PR09}, p.888-p.889.

Consider the following exact sequence $0\rightarrow I_{\mathfrak{l}}^p\rightarrow I_\mathfrak{l}\rightarrow I_\mathfrak{l}^t\rightarrow 0$ of inertia group $I_\mathfrak{l}$. Here $I_\mathfrak{l}^p$ is the wild inertia group at $\mathfrak{l}$ and $I_\mathfrak{l}^t$ is the tame inertia group at $\mathfrak{l}$.
Let $h_\mathfrak{l}$ be the height of $\phi\otimes \mathbb{F}_\mathfrak{l}$, and let $\mathbb{F}_{n}$ be an extension of $\mathbb{F}_\mathfrak{l}$ in $\bar{\mathbb{F}}_\mathfrak{l}$ with $n:=|\mathbb{F}_\mathfrak{l}|^{h_\mathfrak{l}}$ elements. By \cite{PR09-2} Proposition 2.7, we have up to conjugation

$$\bar{\rho}_{\phi,\mathfrak{l}}(I_\mathfrak{l}^t)=\left(\begin{array}{cc}\mathbb{F}_n^* & 0 \\0 & 1\end{array}\right)\subseteq \Gamma_\mathfrak{l},$$
written in block matrices with size $h_\mathfrak{l}$, $r-h_\mathfrak{l}$. Since $\mathbb{F}_\mathfrak{l}^*\neq \{1\}$, the centralizer of $\bar{\rho}_{\phi,\mathfrak{l}}(I_\mathfrak{l}^t)$ in ${\rm{GL}}_{r,\mathbb{F}_\mathfrak{l}}$ is 
$$\left(\begin{array}{c|c}T_\mathfrak{l} & 0 \\\hline 0 & {\rm{GL}}_{r-h_\mathfrak{l},\mathbb{F}_\mathfrak{l}}\end{array}\right).$$

Here $T_\mathfrak{l}$ is the torus ${\rm{Res}}_{\mathbb{F}_\mathfrak{l}}^{\mathbb{F}_n} \mathbb{G}_{m,\mathbb{F}_n}$ in the algebraic group ${\rm{GL}}_{h_\mathfrak{l},\mathbb{F}_\mathfrak{l}}$. We embed $T_\mathfrak{l}$ into ${\rm{GL}}_{r,\mathbb{F}_\mathfrak{l}}$ by setting 
$$T_\mathfrak{l}=\left(\begin{array}{c|c}T_\mathfrak{l} & 0 \\\hline 0 & {\rm{I}}_{r-h_\mathfrak{l}}\end{array}\right)\subseteq {\rm{GL}}_{r,\mathbb{F}_\mathfrak{l}}.$$ 
The $\Gamma_\mathfrak{l}$-conjugacy class of $T_\mathfrak{l}$ in ${\rm{GL}}_{r,\mathbb{F}_\mathfrak{l}}$ is independent of the choice of place $\bar{\mathfrak{l}}$ of $\bar{F}$ above $\mathfrak{l}$.

Let $H_\mathfrak{l}^{\circ}$ be the connected algebraic subgroup of ${\rm{GL}}_{r,\mathbb{F}_\mathfrak{l}}$ generated by $\gamma T_\mathfrak{l} \gamma^{-1}$ for all $\gamma\in \Gamma_{\mathfrak{l}}$( see \cite{Hum81}, Proposition 7.5 ). Let $H_\mathfrak{l}$ be the algebraic subgroup of ${\rm{GL}}_{r,\mathbb{F}_\mathfrak{l}}$ generated by $H_\mathfrak{l}^{\circ}$ and $\Gamma_\mathfrak{l}$. Thus we have 
$$H_\mathfrak{l}^{\circ}\subseteq H_\mathfrak{l} \subseteq {\rm{GL}}_{r,\mathbb{F}_\mathfrak{l}}.$$
Since $H_\mathfrak{l}$ contains $\Gamma_\mathfrak{l}$, $H_\mathfrak{l}$ acts on $\mathbb{F}_\mathfrak{l}^r$ absolutely irreducibly. Now we can make the following claim

\begin{itemize}
\item[Claim:] $H_\mathfrak{l}^{\circ}$ acts on $\mathbb{F}_\mathfrak{l}^r$ absolutely irreducibly. In other words, $H_{\mathfrak{l},\bar{\mathbb{F}}_\mathfrak{l}}^{\circ}$ acts on $\bar{\mathbb{F}}_\mathfrak{l}^r$ irreducibly.

\begin{proof}[Proof of claim]
Let $W$ be a non-trivial $H_{\mathfrak{l},\bar{\mathbb{F}}_\mathfrak{l}}^{\circ}$-invariant subspace of $\bar{\mathbb{F}}_\mathfrak{l}^r$ of minimal dimension. As $H_{\mathfrak{l},\bar{\mathbb{F}}_\mathfrak{l}}^{\circ}$ is normalized by $\Gamma_\mathfrak{l}$, we know that $\gamma W$ is also invariant under $H_{\mathfrak{l},\bar{\mathbb{F}}_\mathfrak{l}}^{\circ}$ for all $\gamma\in \Gamma_\mathfrak{l}$. Consider $V=\sum_{\gamma\in \Gamma_\mathfrak{l}} \gamma W$, it is invariant under $\Gamma_\mathfrak{l}$. Thus we have $\bar{\mathbb{F}}_\mathfrak{l}^r = \sum_{\gamma\in \Gamma_\mathfrak{l}} \gamma W$ because $\Gamma_\mathfrak{l}$ acts on $\bar{\mathbb{F}}_\mathfrak{l}^r$ irreducibly. Since each $\gamma W$ is irreducible under the action of $H_{\mathfrak{l},\bar{\mathbb{F}}_\mathfrak{l}}^{\circ}$, there is a natural number $s_\mathfrak{l}$ and a decomposition 
$${\bar{\mathbb{F}}_\mathfrak{l}}^r=W_1\oplus W_2\oplus \cdots \oplus W_{s_\mathfrak{l}}$$

into irreducible $H_{\mathfrak{l},\bar{\mathbb{F}}_\mathfrak{l}}^{\circ}$-subspaces which are conjugate under $H_{\mathfrak{l},\bar{\mathbb{F}}_\mathfrak{l}}.$

Let $t_\mathfrak{l}$ be the common dimension of $W_i$. Then $H_{\mathfrak{l},\bar{\mathbb{F}}_\mathfrak{l}}^{\circ}$ acts on $\bar{\mathbb{F}}_\mathfrak{l}^r$ through matrices in ${\rm{GL}}_{t_\mathfrak{l},\bar{\mathbb{F}}_\mathfrak{l}}^{s_\mathfrak{l}}$. Thus $H_{\mathfrak{l},\bar{\mathbb{F}}_\mathfrak{l}}^{\circ}\subseteq {\rm{GL}}_{t_\mathfrak{l},\bar{\mathbb{F}}_\mathfrak{l}}^{s_\mathfrak{l}}$. The algebraic subgroup of ${\rm{GL}}_{r,\bar{\mathbb{F}}_\mathfrak{l}}$ mapping a summand $W_i$ to some summand $W_j$ is isomorphic to ${\rm{GL}}_{t_\mathfrak{l},\bar{\mathbb{F}}_\mathfrak{l}}^{s_\mathfrak{l}}\rtimes S_{s_\mathfrak{l}}$. 

\begin{lem}\label{semidirect}
$H_{\mathfrak{l},\bar{\mathbb{F}}_\mathfrak{l}}\subseteq {\rm{GL}}_{t_\mathfrak{l},\bar{\mathbb{F}}_\mathfrak{l}}^{s_\mathfrak{l}}\rtimes S_{s_\mathfrak{l}}$
\end{lem}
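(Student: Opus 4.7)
The idea is to show that $H_{\mathfrak{l},\bar{\mathbb{F}}_\mathfrak{l}}$, as an algebraic group, preserves the partition $\{W_1,\ldots,W_{s_\mathfrak{l}}\}$ of $\bar{\mathbb{F}}_\mathfrak{l}^r$; since the stabilizer of this partition inside ${\rm{GL}}_{r,\bar{\mathbb{F}}_\mathfrak{l}}$ is exactly ${\rm{GL}}_{t_\mathfrak{l},\bar{\mathbb{F}}_\mathfrak{l}}^{s_\mathfrak{l}}\rtimes S_{s_\mathfrak{l}}$, as already noted in the paragraph preceding the lemma, the desired inclusion will follow. Because $H_\mathfrak{l}$ is generated (as an algebraic group) by its identity component $H_\mathfrak{l}^\circ$ and the finite group $\Gamma_\mathfrak{l}$, and the target is a closed subgroup of ${\rm{GL}}_{r,\bar{\mathbb{F}}_\mathfrak{l}}$, it suffices to verify the stabilization separately on each of these two pieces.

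For $H_\mathfrak{l}^\circ$ the content is exactly what was recorded above the lemma: each $W_i$ is irreducible under $H_\mathfrak{l}^\circ$, so $H_\mathfrak{l}^\circ\subseteq\prod_i{\rm{GL}}(W_i)\cong{\rm{GL}}_{t_\mathfrak{l}}^{s_\mathfrak{l}}$, which already sits inside the stabilizer. For an arbitrary $\gamma\in\Gamma_\mathfrak{l}$, I would first invoke that the identity component $H_\mathfrak{l}^\circ$ is normal in $H_\mathfrak{l}$: conjugation by $\gamma$ preserves $H_\mathfrak{l}^\circ$, so $\gamma W_i$ is again an $H_\mathfrak{l}^\circ$-stable subspace, and it remains irreducible by transport of structure. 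Next, I would recall from the proof of the claim that each $W_i$ arose as a translate $\gamma_i W$ of the fixed minimal $H_\mathfrak{l}^\circ$-invariant subspace $W$ chosen at the start, so $\gamma W_i=(\gamma\gamma_i)W$ again lies in the $\Gamma_\mathfrak{l}$-orbit of $W$. By Schur's lemma any two simple $H_\mathfrak{l}^\circ$-submodules in this orbit are either equal or intersect trivially; combined with the semisimplicity of $\bar{\mathbb{F}}_\mathfrak{l}^r$ as an $H_\mathfrak{l}^\circ$-module (it is a sum of simples), one concludes that $\gamma W_i$ must coincide with one of the $W_j$'s in the chosen direct-sum decomposition. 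Hence $\Gamma_\mathfrak{l}$ permutes the summands, giving $\Gamma_\mathfrak{l}\subseteq{\rm{GL}}_{t_\mathfrak{l}}^{s_\mathfrak{l}}\rtimes S_{s_\mathfrak{l}}$, and the lemma follows by combining with the containment for $H_\mathfrak{l}^\circ$.

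I expect the main obstacle to be the final step: ensuring that $\gamma W_i$ hits one of the selected $W_j$'s rather than some other simple submodule in the $\Gamma_\mathfrak{l}$-orbit of $W$ that was discarded in passing from the full (possibly redundant) sum $\sum_{\gamma\in\Gamma_\mathfrak{l}}\gamma W=\bar{\mathbb{F}}_\mathfrak{l}^r$ down to a direct sum. One must either show that the distinct orbit members already form a direct sum, so no pruning is needed, or show that the pruning can be done $\Gamma_\mathfrak{l}$-equivariantly. Addressing this carefully is the only non-routine portion of the argument; everything else is bookkeeping about generators of the algebraic group $H_{\mathfrak{l},\bar{\mathbb{F}}_\mathfrak{l}}$ and closedness of the target.
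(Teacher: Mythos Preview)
The paper does not give its own proof here; it simply cites Lemma~3.5 of \cite{PR09}. So there is nothing in-paper to compare your argument against, and the question is whether your sketch reconstructs that cited argument.

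Your outline is correct in spirit, and you have put your finger precisely on the real issue. The inference ``semisimplicity $\Rightarrow \gamma W_i$ equals some $W_j$'' is invalid as stated: a simple submodule of a semisimple module need not be one of the chosen direct summands (three lines through the origin in a plane on which $H^\circ$ acts by scalars is the standard counterexample). Your two proposed repairs in fact collapse to one, because $\Gamma_{\mathfrak l}$ is transitive on the orbit $\{\gamma W:\gamma\in\Gamma_{\mathfrak l}\}$, so a nonempty $\Gamma_{\mathfrak l}$-stable subset is the whole orbit; equivariant pruning is thus possible only when no pruning was needed to begin with. The standard way around this is to pass to the \emph{isotypic} decomposition of $\bar{\mathbb F}_{\mathfrak l}^r$ under $H^\circ_{\mathfrak l}$ rather than an arbitrary decomposition into irreducibles. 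The isotypic components are canonical, hence automatically permuted by anything normalising $H^\circ_{\mathfrak l}$, in particular by $H_{\mathfrak l}$; and since all simple constituents here are $\Gamma_{\mathfrak l}$-conjugate, the isotypic pieces all have the same dimension. For the application immediately following the lemma (primality of $r$ plus the order argument) this version already suffices. Recovering the lemma exactly as stated, with the $W_i$ irreducible, then amounts to showing each simple occurs with multiplicity one, which is the additional content supplied by the cited result in \cite{PR09}.
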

\begin{proof}
See Lemma 3.5 in \cite{PR09}.
\end{proof}

Combining the result $\Gamma_\mathfrak{l} \subseteq H_{\mathfrak{l},\bar{\mathbb{F}}_\mathfrak{l}}\subseteq {\rm{GL}}_{t_\mathfrak{l},\bar{\mathbb{F}}_\mathfrak{l}}^{s_\mathfrak{l}}\rtimes S_{s_\mathfrak{l}}$ with the primality of $r$, we can conclude either $s_\mathfrak{l}=1,\ t_\mathfrak{l}=r$ or $s_\mathfrak{l}=r,\ t_\mathfrak{l}=1$. If $s_\mathfrak{l}=r,\ t_\mathfrak{l}=1$, then we have
$$\Gamma_\mathfrak{l} \subseteq {\rm{GL}}_{1,\bar{\mathbb{F}}_\mathfrak{l}}^r\rtimes S_r.$$
Since we assume $p>r!$, there is no element in ${\rm{GL}}_{1,\bar{\mathbb{F}}_\mathfrak{l}}^r\rtimes S_r$ whose order is divisible by $p$. Thus we've deduced a contradiction because $\Gamma_\mathfrak{l}$ has elements of order divisible by $p$. Therefore we must have $s_\mathfrak{l}=1,\ t_\mathfrak{l}=r$. So $H_{\mathfrak{l},\bar{\mathbb{F}}_\mathfrak{l}}^{\circ}$ acts on ${\bar{\mathbb{F}}}_\mathfrak{l}^r$  irreducibly.
\end{proof}
\end{itemize}
\ \\

So far we have the following information on $H_\mathfrak{l}^{\circ}$:
\begin{itemize}
\item $H_\mathfrak{l}^{\circ}$ is a connected algebraic group over $\mathbb{F}_\mathfrak{l}$

\item $H_\mathfrak{l}^{\circ}$ acts on ${\mathbb{F}}_\mathfrak{l}^r$ absolutely irreducibly.

\item $H_{\mathfrak{l},\bar{\mathbb{F}}_\mathfrak{l}}^{\circ}$ has a cocharacter of weight $1$ with multiplicity $1$ and weight $0$ with multiplicity $r-1$(In the proof of Lemma \ref{semidirect})
\end{itemize}

With these conditions in hand, we can apply Proposition A.3 in \cite{P97} and prove that
$$H_\mathfrak{l}^{\circ}={\rm{GL}}_{r,\mathbb{F}_\mathfrak{l}}$$

\subsection{Image of residual representation }

\begin{thm}\label{surjmodl}
Let $q=p^e$ be a prime power, $A=\mathbb{F}_q[T]$, and $F=\mathbb{F}_q(T)$. Assume $r\geqslant 3$ is a prime number, there is a constant $c=c(r)\in \mathbb{N}$ depending only on $r$ such that for $p>c(r)$ the following statement is true ${\rm{:}}$

Let $\phi$ be a Drinfeld $A$-module over $F$ of rank $r$ defined by $\phi_T=T+\tau^{r-1}+T^{q-1}\tau^r$. Let $\mathfrak{l}$ be a finite place of $F$ where $\phi$ has good reduction at $\mathfrak{l}$. Then the mod $\mathfrak{l}$ representation 
$$\bar{\rho}_{\phi,\mathfrak{l}}:G_F\longrightarrow {\rm{Aut}}(\phi[\mathfrak{l}])\cong {\rm{GL}}_r(\mathbb{F}_{\mathfrak{l}})$$
 is either reducible or surjective.
\end{thm}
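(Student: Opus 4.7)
The strategy is to prove the contrapositive: assuming $\bar{\rho}_{\phi,\fl}$ is irreducible, show it is surjective. The preceding sections do most of the work. Proposition~\ref{rk=prime} cuts the Aschbacher alternatives down to $\mathcal{C}_1$, $\mathcal{C}_5$, $\mathcal{C}_6$, $\mathcal{S}$, and irreducibility eliminates $\mathcal{C}_1$; the algebraic-group analysis of Section~4 produces $H_\fl^\circ = \GL_{r,\F_\fl}$, where $H_\fl^\circ$ is the connected algebraic subgroup generated by the $\Gamma_\fl$-conjugates of the tame inertia torus $T_\fl$. So the task is to convert these two pieces of information into the equality $\Gamma_\fl = \GL_r(\F_\fl)$.

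The first substantive step is to transport the algebraic equality into a finite-group containment. I would invoke a result in the spirit of Proposition~A.3 of \cite{P97}, applied to the pair $(\Gamma_\fl, H_\fl^\circ)$ together with the $p$-elements from the inertia at $T$ supplied by Lemma~\ref{lem22}, to conclude $\Gamma_\fl \supseteq \mathrm{SL}_r(\F_{\fl'})$ for some subfield $\F_{\fl'}\subseteq \F_\fl$. The presence of such an $\mathrm{SL}_r$-type subgroup is simultaneously incompatible with the remaining Aschbacher classes once $p>c(r)$ for a suitable constant: a $\mathcal{C}_6$ normalizer of an extraspecial subgroup has order bounded by $|\F_\fl^*|$ times a polynomial function of $r$, and an almost simple irreducible subgroup of $\GL_r(\F_\fl)$ in class $\mathcal{S}$ admits order estimates (Landazuri--Seitz type) that cannot accommodate both $\mathrm{SL}_r(\F_{\fl'})$ and the inertia subgroup of order $|\F_\fl|^{r-1}$. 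Thus only the subfield scenario (essentially $\mathcal{C}_5$) remains to handle.

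To upgrade $\F_{\fl'}$ to the full field $\F_\fl$, I would use the $\F_\fl$-rational structure of the inertia at $T$ from Lemma~\ref{lem22-1}: $\bar{\rho}_{\phi,\fl}(I_T)$ is realized as an elementary abelian $p$-group of genuine $\F_\fl$-valued unipotent matrices spanning an $(r-1)$-dimensional $\F_\fl$-subspace inside the upper-triangular unipotent radical. If $\F_{\fl'}$ were a proper subfield and $\Gamma_\fl$ were contained in $Z(\F_\fl)\cdot\GL_r(\F_{\fl'})$, an $\F_p$-rank comparison, combined with the fact that the entries of the inertia matrices are not jointly conjugate into a proper subfield, yields a contradiction. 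Therefore $\F_{\fl'}=\F_\fl$ and $\mathrm{SL}_r(\F_\fl)\subseteq \Gamma_\fl$.

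Finally, Corollary~\ref{cor5} identifies $\det \circ \bar{\rho}_{\phi,\fl}$ with $\bar{\rho}_{C,\fl}$, which is surjective onto $\F_\fl^*$ by Proposition~\ref{prop1}. Together with $\mathrm{SL}_r(\F_\fl)\subseteq\Gamma_\fl$ and the short exact sequence $1\to \mathrm{SL}_r\to \GL_r\xrightarrow{\det}\mathbb{G}_m\to 1$, this forces $\Gamma_\fl = \GL_r(\F_\fl)$. The main obstacle is the first substantive step: the passage from the algebraic identity $H_\fl^\circ=\GL_{r,\F_\fl}$ to the concrete containment $\mathrm{SL}_r(\F_{\fl'})\subseteq \Gamma_\fl$, which must be carried out uniformly in $\fl$ and in tandem with ruling out classes $\mathcal{C}_6$ and $\mathcal{S}$. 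The constant $c(r)$ in the theorem precisely absorbs the order-theoretic bounds these exclusions demand.
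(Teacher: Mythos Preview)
Your plan agrees with the paper through the point where Section~4 delivers $H_\fl^\circ = \GL_{r,\F_\fl}$, and your final step (surjective determinant via Corollary~\ref{cor5} upgrades $\mathrm{SL}_r(\F_\fl)\subseteq\Gamma_\fl$ to $\Gamma_\fl=\GL_r(\F_\fl)$) is exactly what the paper does. The divergence is in the middle, precisely at the step you yourself flag as ``the main obstacle.''

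The paper does \emph{not} return to the Aschbacher classification after obtaining $H_\fl^\circ = \GL_{r,\F_\fl}$, and it never lands in $\mathrm{SL}_r$ over a proper subfield. The only role of the classes $\mathcal{C}_5,\mathcal{C}_6,\mathcal{S}$ is to supply absolute irreducibility of $\Gamma_\fl$; once that feeds into $H_\fl^\circ=\GL_{r,\F_\fl}$, the paper applies two black boxes from \cite{PR09}. Lemma~3.12 there gives a constant $\tilde{c}$ depending only on $r$ with $[\GL_r(\F_\fl):\Gamma_\fl]\leqslant\tilde{c}$ for every such $\fl$, and Proposition~2.3 there says that any subgroup of index at most $\tilde{c}$ contains $\mathrm{SL}_r(\F_\fl)$ once $|\mathrm{PGL}_r(\F_\fl)|>\tilde{c}!$. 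Setting $c(r)=\max\{r!,\tilde{c}!\}$ finishes the argument. This bypasses entirely your separate exclusion of $\mathcal{C}_6$ and $\mathcal{S}$ and your subfield-upgrade step. Note also that Proposition~A.3 of \cite{P97} is already consumed in Section~4 to identify $H_\fl^\circ$; it classifies connected groups with a given cocharacter pattern and is not a tool for producing $\mathrm{SL}_r(\F_{\fl'})$ inside a finite group, so that citation is misplaced for the step you want.

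Your detour could likely be pushed through---the order bound for $\mathcal{C}_6$ is genuine, and Landazuri--Seitz type bounds do constrain $\mathcal{S}$---but it is longer, and the passage from $H_\fl^\circ=\GL_{r,\F_\fl}$ to an $\mathrm{SL}_r(\F_{\fl'})$ containment is left without a concrete mechanism. The paper's route is shorter because Pink and R\"utsche already packaged exactly this passage (algebraic envelope equals $\GL_r$ $\Rightarrow$ bounded index $\Rightarrow$ contains $\mathrm{SL}_r$) into citable statements.
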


\begin{proof}

 Let $r$ be a prime number, $A=\mathbb{F}_q[T]$, and $F=\mathbb{F}_q(T)$ where $q=p^e$ where $p>r!$. Let $\phi$ be a Drinfeld $A$-module over $F$ of rank $r$ with generic characteristic defined by $\phi_T=T+\tau^{r-1}+T^{q-1}\tau^r$. Let $\mathfrak{l}$ be a place of $F$ where $\phi$ has good reduction at $\mathfrak{l}$. Let $\Gamma_\mathfrak{l}=\bar{\rho}_{\phi,\mathfrak{l}}(G_F)$ be irreducible. From the argument above, we have $H_\mathfrak{l}^{\circ}={\rm{GL}}_{r,\mathbb{F}_\mathfrak{l}}$. 
 
 Now we can apply Lemma 3.12 in \cite{PR09} for all places $\mathfrak{l}$ of $F$ where $\phi$ has good reduction. Thus there exists a natural number $\tilde{c}$ depends only on $r$ such that 
 $$[{\rm{GL}}_r(\mathbb{F}_\mathfrak{l}): \Gamma_\mathfrak{l}]\leqslant \tilde{c}.$$
Therefore, we replace $p$ by a prime number $p>c:={\rm{max}}\{r!, \tilde{c}!\}$ and run Lemma 3.12 in \cite{PR09} again, we get  $$[{\rm{GL}}_r(\mathbb{F}_\mathfrak{l}): \Gamma_\mathfrak{l}]\leqslant \tilde{c} \text{ and } |{\rm PGL}_r(\F_\fl)|>\tilde{c}!.$$
By applying Proposition 2.3 in \cite{PR09}, we have 
 $$\Gamma_\mathfrak{l}\supseteq {\rm{SL}}_r(\mathbb{F}_\mathfrak{l}).$$
By Proposition \ref{prop4} and Chebotarev density theorem, $\det\circ\bar{\rho_{\phi,\mathfrak{l}}}$ is equal to the mod $\mathfrak{l}$ representation $\bar{\rho_{C,\mathfrak{l}}}$ of Carlitz module. Hence we have $\det(\Gamma_{\mathfrak{l}})= \mathbb{F}_\mathfrak{l}^*$, which implies
$$\Gamma_\mathfrak{l}={\rm{GL}}_r(\mathbb{F}_\mathfrak{l}).$$
\end{proof}

\section{Irreducibility of the mod $\mathfrak{l}$ representations }
We assume $\mathfrak{l}\neq(T)$ from here to subsection 5.2, the case when $\mathfrak{l}=(T)$ will be dealt with in subsection 5.3.

For $\fl=(T-c)$ a degree-$1$ prime ideal different from $(T)$, the proof for irreducibility is straightforward.  

\begin{claim}
$\phi_{T-c}(x)/x=(T-c)+x^{q^{r-1}-1}+T^{q-1}x^{q^r-1}$ is irreducible over $F$.

\end{claim}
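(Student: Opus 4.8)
We need to prove that $f(x) := \phi_{T-c}(x)/x = (T-c) + x^{q^{r-1}-1} + T^{q-1}x^{q^r-1}$ is irreducible over $F = \mathbb{F}_q(T)$. Viewing $f$ as a polynomial in $x$ with coefficients in $A = \mathbb{F}_q[T]$, I would examine its Newton polygon with respect to the valuation $v_{(T)}$. The constant term $(T-c)$ has $v_{(T)} = 0$ when $c \ne 0$ (and the case $c = 0$ is excluded since $\mathfrak{l} \ne (T)$), so a direct Eisenstein argument at $(T)$ does not apply to $f$ itself. Instead I would work with the reciprocal polynomial, or equivalently analyze the roots: the roots of $f$ are exactly the nonzero $(T-c)$-torsion points of $\phi$, which (as established in the discussion around Lemma \ref{lem22-1}) sit in a Tate-uniformization picture.

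\textbf{Concretely, I would use the Tate datum at $(T)$.} Since $\phi_T = T + \tau^{r-1} + T^{q-1}\tau^r$ has stable reduction of rank $r-1$ at $(T)$, Proposition \ref{prop6} and Remark \ref{rem7} give a Drinfeld module $\psi$ of rank $r-1$ with good reduction over $A_{(T)}$ and a rank-$1$ lattice $\Gamma$, together with a short exact sequence $0 \to \psi[T-c] \to \phi[T-c] \to \Gamma/(T-c)\Gamma \to 0$ of $G_{F_{(T)}}$-modules. The computation already carried out in the proof of Lemma \ref{lem22} shows that the element $z$ with $\psi_{T-c}(z) = \gamma$ has valuation $v(z) = -1/q^{(r-1)\deg_T(T-c)} = -1/q^{r-1}$, so the ramification index of $F^{un}_{(T)}(z)/F^{un}_{(T)}$ is exactly $q^{r-1}$. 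This means $f$ has a root generating a totally ramified extension of degree $q^{r-1}$ over the maximal unramified extension; hence $f$ has an irreducible factor over $F_{(T)}$ of degree divisible by $q^{r-1}$. But $\deg_x f = q^r - 1$, and I need more to force full irreducibility.

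\textbf{The remaining input is a degree/coprimality argument, and this is the main obstacle.} The full degree is $q^r - 1$, which factors as $(q-1)(q^{r-1} + q^{r-2} + \cdots + 1)$; I need to combine the totally ramified factor of degree (a multiple of) $q^{r-1}$ from $(T)$ with information at another place to rule out any nontrivial factorization. The natural second place is $\infty$ or a place of good reduction: the Newton polygon of $f$ with respect to $v_\infty$ (using $\deg_T$) has the coefficient $T^{q-1}$ of $x^{q^r-1}$ and constant term of degree $1$, which forces a segment of slope $-1/(q^r - q^{r-1})$ — wait, more carefully, the leading and constant coefficients give a single segment whose horizontal extent $q^r - 1$ pairs with a total vertical drop, and primitivity of the resulting slope's denominator would give another totally ramified piece. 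I expect that comparing the ramification at $(T)$ (giving a chunk of size $q^{r-1}$, exactly a power of $p$) with ramification or residue-degree information at $\infty$ (giving a chunk coprime to $p$, of size related to $q-1$ or $q^r - 1$) will show any irreducible factor must have degree a multiple of both a power-of-$p$ part and the prime-to-$p$ part, hence equal to $q^r - 1$. The delicate point — and where I would spend the most care — is making the $\infty$-side (or good-reduction-side) computation precise enough to nail down that the two local contributions are coprime and together exhaust the degree, rather than merely bounding the factor from below.
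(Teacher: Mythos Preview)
Your route is quite different from the paper's, and the obstacle you flag at the end is real: your outline does not close.

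The paper's argument is a one-step local computation at a \emph{single} place. It passes to the reciprocal polynomial $(T-c)x^{q^r-1}+x^{q^r-q^{r-1}}+T^{q-1}$, divides through by $T-c$ to make it monic, and then invokes the Eisenstein criterion over the completion of $F$ at the place $(\tfrac{1}{T-c})$, i.e.\ at infinity. There is no Tate uniformization, no place $(T)$, and no gluing of two local pictures: the reciprocal substitution is meant to convert the problem into one that a single completion handles directly.

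Your two-place strategy, by contrast, runs into two concrete problems. First, the Newton polygon of $f$ at $\infty$ is a single segment of slope $(2-q)/(q^r-1)$ (not $-1/(q^r-q^{r-1})$ as you wrote), so the $\infty$-side only forces every $F$-irreducible factor to have degree divisible by $(q^r-1)/\gcd(q-2,q^r-1)$, and under the running hypotheses that gcd need not be $1$. Second, knowing $q^{r-1}\mid[F_{(T)}(\alpha):F_{(T)}]$ for some root $\alpha$ yields only $[F(\alpha):F]\ge q^{r-1}$, not $q^{r-1}\mid[F(\alpha):F]$: the $F$-irreducible factor containing $\alpha$ may split over $F_{(T)}$ into pieces of varying degrees, some coming from roots with $v_{(T)}\ge 0$, so its total degree need not be a multiple of $q^{r-1}$. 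Hence the two local contributions do not multiply up to $q^r-1$, and the coprimality argument you anticipate does not materialize. The key simplification you are missing is exactly the passage to the reciprocal polynomial.
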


\begin{proof}[Proof of claim]
First of all, $\phi_{T-c}(x)/x$ is irreducible over $F$ if and only if its reciprocal polynomial 
$$x^{q^r-1}\phi_{T-c}(x^{-1})/x^{-1}=(T-c)x^{q^r-1}+x^{q^r-q^{r-1}}+T^{q-1}$$
is irreducible over $F$. As $T-c\in F^*$, it is enough for us to prove the polynomial 
$$x^{q^r-1}+\frac{1}{T-c}x^{q^r-q^{r-1}}+\frac{T^{q-1}}{T-c}$$
is irreducible over $F$.  As $F=\F_q(T)=\F_q(T-c)$, we may consider the completion $F'$ of $F$ at the place $(\frac{1}{T-c})$. Then the polynomial $x^{q^r-1}+\frac{1}{T-c}x^{q^r-q^{r-1}}+\frac{T^{q-1}}{T-c}$ is irreducible over $F'$ by the Eisenstein criterion. Hence $x^{q^r-1}+\frac{1}{T-c}x^{q^r-q^{r-1}}+\frac{T^{q-1}}{T-c}$ must be irreducible over $F$, the proof is now complete.

\end{proof}

For general prime ideal $\fl\neq(T)$, we suppose $\phi[\mathfrak{l}]$ viewed as a $\mathbb{F}_{\mathfrak{l}}[G_F]$ is reducible, then it contains a $G_F$-invariant proper submodule $X$. Let ${\rm{dim}}_{\F_\fl}X=d$, we claim that $d\neq 1 \text{ or } r-1$. This follows the same strategy as in the rank-$3$ case (see section 3 in \cite{Ch20}). Firstly, there is a basis such that the action of $G_F$ on $\phi[\mathfrak{l}]$ is of one of the following forms:
\begin{enumerate}
\item[(i)]$\left(\begin{array}{cc}\chi & * \\0 & B\end{array}\right)$ (if $X$ has dimension $1$),
\item[(ii)]$\left(\begin{array}{cc}B & * \\0 & \chi\end{array}\right) $ (if $X$ has codimension $1$),
\end{enumerate}
where $B:G_F \longrightarrow {\rm{GL}}_{r-1}(\mathbb{F}_{\mathfrak{l}})$ is a homomorphism  and $\chi:G_F\longrightarrow \mathbb{F}_{\mathfrak{l}}^*$ is a character. 

Now we consider the following exact sequence $$0\longrightarrow \phi[\mathfrak{l}]^{\circ} \longrightarrow \phi[\mathfrak{l}]\longrightarrow \phi[\mathfrak{l}]^{et}\longrightarrow 0$$ of $\mathbb{F}_{\mathfrak{l}}[I_\mathfrak{l}]$-modules. 

\begin{prop}\label{prop10}\rm{(\cite{PR09-2}, Proposition 2.7)}  
\begin{enumerate}
\item[i.] The inertia group $I_{\mathfrak{l}}$ acts trivially on $\phi[\mathfrak{l}]^{et}$.
\item[ii.] The $\mathbb{F}_{\mathfrak{l}}$-vector space $\phi[\mathfrak{l}]^{\circ}$ extends uniquely to a one dimensional $\mathbb{F}_\mathfrak{l}^{(h)}$-vector space structure such that the action of $I_{\mathfrak{l}}$ on $\phi[\mathfrak{l}]^\circ$ is given by the fundamental character $\zeta_n$.
\item[iii.] The action of wild inertia group at $\mathfrak{l}$ on $\phi [ \mathfrak{l}]^{\circ}$ is trivial.

\end{enumerate}

\end{prop}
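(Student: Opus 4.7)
The plan is to reduce to a local analysis at $\fl$ using the good reduction hypothesis. Since $\phi$ has good reduction at $\fl$, we may assume $\phi$ is defined over the valuation ring $A_\fl$, so that the reduction $\bar\phi := \phi \otimes \F_\fl$ is a rank-$r$ Drinfeld module over $\F_\fl$ of some height $h := h_\fl$. The key structural fact I would use is that $\phi[\fl]^\circ$ consists of those $\fl$-torsion points reducing to $0$ modulo the maximal ideal, while $\phi[\fl]^{et}$ maps isomorphically onto $\bar\phi[\fl]$ via reduction; by Proposition \ref{prop0.2}(2), the latter has $\F_\fl$-dimension $r-h$, consistent with $\dim_{\F_\fl}\phi[\fl]^\circ = h$.

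For part (i), the reduction map gives a $G_{F_\fl}$-equivariant isomorphism $\phi[\fl]^{et} \cong \bar\phi[\fl]$, where the right-hand side is a Galois module for $\Gal(\F_\fl^{\alg}/\F_\fl)$ on which the inertia subgroup $I_\fl$ acts trivially by definition of inertia for an unramified extension.

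For parts (ii) and (iii), I would analyze the formal group $\hat\phi$ of $\phi$ at $\fl$. The connected part $\phi[\fl]^\circ$ is precisely the $\fl$-torsion of $\hat\phi$, which is a formal $A$-module of height $h$. By the Drinfeld-module analogue of Lubin--Tate theory, the endomorphism ring of such a formal $A$-module contains the ring of integers of the degree-$h$ unramified extension $F_\fl^{(h)}/F_\fl$. This embedding endows $\phi[\fl]^\circ$ with a one-dimensional $\F_\fl^{(h)}$-vector space structure, forcing $I_\fl$ to act by a character $\chi: I_\fl \to (\F_\fl^{(h)})^*$. To pin down the valuations and ramification, I would compute the Newton polygon of $\phi_\fl(x)/x$: its first nontrivial slope equals $-1/(|\F_\fl|^h - 1)$, so each nonzero point of $\phi[\fl]^\circ$ has valuation $1/(|\F_\fl|^h - 1)$, making $F_\fl(\phi[\fl]^\circ)/F_\fl$ totally tamely ramified of degree exactly $|\F_\fl|^h - 1$. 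Part (iii) is then immediate since $\gcd(|\F_\fl|^h - 1, p) = 1$, so wild inertia $I_\fl^p$ is killed. For part (ii), the $A_\fl^{(h)}$-equivariance of the action combined with the distinguished uniformizer arising from the Newton polygon identifies $\chi$ with the fundamental character $\zeta_h$.

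The main obstacle is the precise identification of $\chi$ with $\zeta_h$ rather than some Galois twist or power thereof, since the formal-module formalism a priori only pins down $\chi$ up to such ambiguity. A clean resolution is to exploit the $A_\fl^{(h)}$-equivariance together with the explicit uniformizer coming from the Newton polygon computation, essentially transferring Serre's classical argument for the action of inertia on the $p$-torsion of a supersingular elliptic curve to the Drinfeld setting. This is the route taken in \cite{PR09-2}, and any self-contained argument must at some point carry out this Lubin--Tate-style identification.
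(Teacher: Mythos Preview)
The paper does not actually prove this proposition; it is quoted verbatim from \cite{PR09-2}, Proposition~2.7, and used as a black box. So there is no ``paper's own proof'' to compare against.

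That said, your outline is essentially the standard argument one finds in the cited reference: reduction modulo~$\fl$ handles the \'etale part, and the connected part is analyzed via the associated formal $A_\fl$-module of height~$h$, whose $\fl$-torsion generates a totally tamely ramified extension of degree $|\F_\fl|^h-1$ by the Newton polygon slope you describe. Your honest acknowledgment that the precise identification of the inertia character with the fundamental character (rather than a twist or power) requires the full Lubin--Tate/formal-module machinery is correct; this is exactly where the work lies in \cite{PR09-2}, and your sketch stops at the right place. One notational slip: the paper writes $\zeta_n$ with $n=|\F_\fl|^{h_\fl}$, so your $\zeta_h$ should be adjusted accordingly, but the content is the same.
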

\begin{cor}\label{cor11}

$\phi[\mathfrak{l}]^\circ$ is an irreducible $\mathbb{F}_{\mathfrak{l}}[I_\mathfrak{l}]$-module.

\end{cor}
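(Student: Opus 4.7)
The plan is to read the irreducibility of $\phi[\mathfrak{l}]^\circ$ directly off of Proposition~\ref{prop10}(ii). By that statement, $\phi[\mathfrak{l}]^\circ$ carries a canonical structure of a one-dimensional vector space over $\mathbb{F}_\mathfrak{l}^{(h)}$ (the degree-$h$ extension of $\mathbb{F}_\mathfrak{l}$, where $h$ is the height of $\phi \otimes \mathbb{F}_\mathfrak{l}$), and the inertia group $I_\mathfrak{l}$ acts through the fundamental character $\zeta_n$ of level $n = |\mathbb{F}_\mathfrak{l}|^h$, which lands in $(\mathbb{F}_\mathfrak{l}^{(h)})^\times$. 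So the whole problem reduces to showing that an $\mathbb{F}_\mathfrak{l}$-subspace of this one-dimensional $\mathbb{F}_\mathfrak{l}^{(h)}$-vector space which is stable under the image of $\zeta_n$ must already be an $\mathbb{F}_\mathfrak{l}^{(h)}$-subspace.

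First I would let $W \subseteq \phi[\mathfrak{l}]^\circ$ be a nonzero $\mathbb{F}_\mathfrak{l}[I_\mathfrak{l}]$-submodule. Since $\phi[\mathfrak{l}]^\circ$ is one-dimensional over $\mathbb{F}_\mathfrak{l}^{(h)}$, the action of $\sigma \in I_\mathfrak{l}$ is multiplication by $\zeta_n(\sigma) \in (\mathbb{F}_\mathfrak{l}^{(h)})^\times$, so $W$ is stable under multiplication by every element of the image of $\zeta_n$. I would then invoke the standard fact that the fundamental character $\zeta_n \colon I_\mathfrak{l}^t \to (\mathbb{F}_\mathfrak{l}^{(h)})^\times$ is surjective; in particular its image contains a primitive element $\alpha$ generating $\mathbb{F}_\mathfrak{l}^{(h)}$ over $\mathbb{F}_\mathfrak{l}$. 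Because $W$ is closed under addition and under multiplication by $\alpha$, it is closed under multiplication by every element of $\mathbb{F}_\mathfrak{l}[\alpha] = \mathbb{F}_\mathfrak{l}^{(h)}$, hence $W$ is an $\mathbb{F}_\mathfrak{l}^{(h)}$-subspace of a one-dimensional $\mathbb{F}_\mathfrak{l}^{(h)}$-space, and therefore $W = \phi[\mathfrak{l}]^\circ$.

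There isn't really a hard step in this corollary; the only thing to double-check is the surjectivity of the fundamental character onto $(\mathbb{F}_\mathfrak{l}^{(h)})^\times$, which is the classical statement that tame inertia surjects onto the multiplicative group of the residue field extension of the relevant degree. Parts (i) and (iii) of Proposition~\ref{prop10} play no role in the argument: (i) is about the \'etale quotient rather than the connected piece, and (iii) only tells us that wild inertia acts trivially, which is consistent with (and not needed beyond) the fact that the whole action factors through the tame quotient via $\zeta_n$.
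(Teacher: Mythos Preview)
Your proof is correct and follows the same approach as the paper, which simply states that the corollary ``immediately follows from Proposition~\ref{prop10}(ii).'' You have spelled out in full the reasoning the paper leaves implicit: the surjectivity of the fundamental character forces any $I_\mathfrak{l}$-stable $\mathbb{F}_\mathfrak{l}$-subspace to be an $\mathbb{F}_\mathfrak{l}^{(h)}$-subspace, and one-dimensionality over $\mathbb{F}_\mathfrak{l}^{(h)}$ then gives irreducibility.
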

\begin{proof}
This immediately follows from Proposition \ref{prop10}(ii).
\end{proof}

By Corollary \ref{cor11}, we have either  (i) $X\cap\phi[\mathfrak{l}]^{\circ}=\{0\}$ or (ii) $ X\cap\phi[\mathfrak{l}]^{\circ}=\phi[\mathfrak{l}]^{\circ}$. 

\begin{claim}
 Either $\det B$ is unramified at every prime ideal $\mathfrak{p}$ of $A$ or $\chi$ is unramified at every prime ideal $\mathfrak{p}$ of $A$. 
\end{claim}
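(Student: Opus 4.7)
The plan is to use the identity $\chi\cdot \det B = \det\bar{\rho}_{\phi,\mathfrak{l}} = \bar{\rho}_{C,\mathfrak{l}}$ (Corollary \ref{cor5}) and to check ramification of $\chi$ and $\det B$ at each prime $\mathfrak{p}$ of $A$ separately. For $\mathfrak{p}\neq (T),\mathfrak{l}$, the leading coefficient $T^{q-1}$ of $\phi_T$ is a unit at $\mathfrak{p}$, so $\phi$ has good reduction there, and Proposition \ref{prop3} gives that $\bar{\rho}_{\phi,\mathfrak{l}}$ itself is unramified at $\mathfrak{p}$; in particular both $\chi$ and $\det B$ are unramified at $\mathfrak{p}$. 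For $\mathfrak{p}=(T)$, Lemma \ref{lem22} shows that $\bar{\rho}_{\phi,\mathfrak{l}}(I_T)$ is unipotent, so every eigenvalue of $\bar{\rho}_{\phi,\mathfrak{l}}(\sigma)$ for $\sigma\in I_T$ equals $1$; restricting to any invariant subspace or passing to any invariant quotient preserves this, and one reads off $\chi|_{I_T}=1$ and $\det B|_{I_T}=1$ in either of the cases (i), (ii).

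The substantive case is $\mathfrak{p}=\mathfrak{l}$, where I would exploit the $\mathbb{F}_\mathfrak{l}[I_\mathfrak{l}]$-filtration $0\to \phi[\mathfrak{l}]^\circ\to \phi[\mathfrak{l}]\to \phi[\mathfrak{l}]^{et}\to 0$ from Proposition \ref{prop10} together with the irreducibility of $\phi[\mathfrak{l}]^\circ$ as an $I_\mathfrak{l}$-module (Corollary \ref{cor11}). Since $X$ is in particular $I_\mathfrak{l}$-stable, $X\cap \phi[\mathfrak{l}]^\circ$ is either $0$ or all of $\phi[\mathfrak{l}]^\circ$, and combining this dichotomy with whether $X$ is $1$-dimensional (case (i)) or has codimension $1$ (case (ii)) leaves four sub-cases. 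In the two sub-cases where the $1$-dimensional piece of the reducible filtration (on which $\chi$ acts) embeds into or is a quotient of the \'etale part $\phi[\mathfrak{l}]^{et}$, the triviality of $I_\mathfrak{l}$ on $\phi[\mathfrak{l}]^{et}$ gives $\chi|_{I_\mathfrak{l}}=1$ directly, so $\chi$ is unramified at $\mathfrak{l}$. In the remaining sub-cases, the containment $\phi[\mathfrak{l}]^\circ\subseteq X$ (or the dual one $\phi[\mathfrak{l}]^\circ\hookrightarrow \phi[\mathfrak{l}]/X$) forces $h_\mathfrak{l}\leqslant 1$; when $h_\mathfrak{l}=0$ everything is \'etale and both characters are unramified, while when $h_\mathfrak{l}=1$ the $1$-dimensional piece coincides with $\phi[\mathfrak{l}]^\circ$, so $\chi|_{I_\mathfrak{l}}=\zeta_n$, and since $\det\bar{\rho}_{\phi,\mathfrak{l}}|_{I_\mathfrak{l}}=\zeta_n$ as well (the \'etale part contributes trivially to the determinant), the relation $\chi\cdot \det B=\det\bar{\rho}_{\phi,\mathfrak{l}}$ yields $\det B|_{I_\mathfrak{l}}=1$, i.e.\ $\det B$ is unramified at $\mathfrak{l}$.

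Assembling the three primes, in every scenario at least one of $\chi$ or $\det B$ is unramified at every prime of $A$, which is exactly the claim. The argument is essentially pure case analysis, with no serious obstacle; the only delicate bookkeeping lies in the $\mathfrak{p}=\mathfrak{l}$ case, where one has to correctly match the $1$-dimensional piece of the reducible filtration with either $\phi[\mathfrak{l}]^\circ$ or $\phi[\mathfrak{l}]^{et}$ and then read off the unramified character via the determinant relation.
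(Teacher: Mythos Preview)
Your proposal is correct and follows essentially the same case-by-case strategy as the paper: handle $\mathfrak{p}\neq (T),\mathfrak{l}$ by good reduction, handle $\mathfrak{p}=(T)$ via the unipotence of $\bar{\rho}_{\phi,\mathfrak{l}}(I_T)$, and handle $\mathfrak{p}=\mathfrak{l}$ by combining the dimension/codimension of $X$ with the dichotomy $X\cap\phi[\mathfrak{l}]^\circ\in\{0,\phi[\mathfrak{l}]^\circ\}$ from Corollary~\ref{cor11}. The only minor difference is in the two sub-cases at $\mathfrak{l}$ where the $1$-dimensional piece coincides with $\phi[\mathfrak{l}]^\circ$: the paper observes directly that in these situations the $(r-1)$-dimensional piece on which $B$ acts is a sub or quotient of the \'etale part $\phi[\mathfrak{l}]^{et}$, so $B|_{I_\mathfrak{l}}$ (hence $\det B|_{I_\mathfrak{l}}$) is trivial immediately, whereas you instead deduce $h_\mathfrak{l}\leqslant 1$, identify $\chi|_{I_\mathfrak{l}}$ with the fundamental character, and then cancel against $\det\bar{\rho}_{\phi,\mathfrak{l}}|_{I_\mathfrak{l}}$ via the relation $\chi\cdot\det B=\bar{\rho}_{C,\mathfrak{l}}$. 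Both routes are valid; the paper's is slightly more direct since it avoids invoking the determinant relation at $\mathfrak{l}$.
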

\begin{proof}
If $\mathfrak{p}$ is a prime different from $(T)$ and $\mathfrak{l}$, then $\bar{\rho}_{\phi,\mathfrak{l}}$ is unramified at $\mathfrak{p}$. Thus $\det B$ and $\chi$ are both unramified at $\mathfrak{p}$. It suffices for us to prove the cases $\mathfrak{p}=(T)$ and $\mathfrak{p}=\mathfrak{l}$.\\
For $\mathfrak{p}=(T)$, Lemma \ref{lem22-1} implies $\#\bar{\rho}_{\phi,\mathfrak{l}}(I_T)$ is a $q$-power. This means the order of $B(\sigma)$ and $\chi(\sigma)$ are also $q$-powers. Therefore, the order of $\det B(\sigma)$ and $\chi(\sigma)$ are also $q$-powers for all $\sigma \in I_T$. However, $\det B(\sigma)$ and $\chi(\sigma)$ belong to $\mathbb{F}_{\mathfrak{l}}^*$,which is of order prime to $q$. Hence $\det B(\sigma)=\chi(\sigma)=1$ for all $\sigma \in I_T$. \\
For $\mathfrak{p}=\mathfrak{l}$, we have two cases.
\begin{enumerate}
\item[Case (i):]
$X$ has dimension $1$.  Then $X$ is an irreducible $\mathbb{F}_{\mathfrak{l}}[G_F]$-module. This means either $X=\phi[\mathfrak{l}]^\circ$ or $X\cap \phi[\mathfrak{l}]^\circ=\{0\}$. Let $\sigma\in I_{\mathfrak{l}}.$ If $X=\phi[\mathfrak{l}]^\circ$, then the matrix $B(\sigma)$ describes how $\sigma$ acts on the $\mathbb{F}_{\mathfrak{l}}[I_{\mathfrak{l}}]$-module $\phi[\mathfrak{l}]/\phi[\mathfrak{l}]^{\circ} \cong \phi[\mathfrak{l}]^{et}$. Proposition \ref{prop10} then implies $\det B(\sigma)=1$ for all $\sigma\in I_{\mathfrak{l}}$. When $X\cap \phi[\mathfrak{l}]^\circ=\{0\}$, we can view $X$ as a $\mathbb{F}_{\mathfrak{l}}[I_{\mathfrak{l}}]$-submodule of $\phi[\mathfrak{l}]/\phi[\mathfrak{l}]^{\circ} \cong \phi[\mathfrak{l}]^{et}$. Therefore, Proposition \ref{prop10} implies $\chi(\sigma)=1$ for all $\sigma\in I_{\mathfrak{l}}$.
\item[Case (ii):]
$X$ has codimension $1$.  As $\phi[\mathfrak{l}]^\circ$ is an irreducible $\mathbb{F}_{\mathfrak{l}}[I_{\mathfrak{l}}]$- module by Corollary \ref{cor11}, we get either $X\supseteq \phi[\mathfrak{l}]^\circ$ or $X\cap \phi[\mathfrak{l}]^\circ=\{0\}$. When $X\supseteq \phi[\mathfrak{l}]^\circ$, we have the ``modulo $X$'' map $\phi[\mathfrak{l}]^{et}\cong \phi[\mathfrak{l}]/\phi[\mathfrak{l}]^{\circ}\rightarrow \phi[\mathfrak{l}]/X$  as $\mathbb{F}_{\mathfrak{l}}[I_\mathfrak{l}]$-modules. Thus $\chi(\sigma)=1$ for all $\sigma\in I_{\mathfrak{l}}$ by Proposition \ref{prop10}. The argument for the case when $X\cap \phi[\mathfrak{l}]^\circ=\{0\}$ is similar to what happened in case (i), so $\det B(\sigma)=1$ for all $\sigma\in I_{\mathfrak{l}}$.
\end{enumerate}
\end{proof}
We first assume that $\det B$ is unramified at every prime $\mathfrak{p}$.
The homomorphism $\det B:G_F\rightarrow \mathbb{F}_{\mathfrak{l}}^*$ factors through ${\rm{Gal}}(K/\mathbb{F}_q(T))$, where $K$ is a finite abelian extension of $\mathbb{F}_q(T)$, unramified at every finite place and tamely ramified at infinity. Hayes \cite{Hay74} studied the class field theory in function field analogue and showed that such finite abelian extension is just some constant extension of $\mathbb{F}_q(T)$ (see section 5 and Theorem 7.1 in \cite{Hay74}), hence we have $K\subseteq \bar{\mathbb{F}}_q(T)$. Now we can write $\det B$ in the following way:

$$\det B: G_F\twoheadrightarrow {\rm{Gal}}(\bar{\mathbb{F}}_q(T)/\mathbb{F}_q(T))\xrightarrow{\sim}{\rm{Gal}}(\bar{\mathbb{F}}_q/\mathbb{F}_q)\rightarrow \mathbb{F}_{\mathfrak{l}}^*,$$
where the first map is the restriction map.

This implies there is some element $\xi \in \mathbb{F}_{\mathfrak{l}}^*$ such that $\det B({\rm{Frob}}_{\mathfrak{p}})=\xi^{\deg \mathfrak{p}}$ for every prime ideal $\mathfrak{p}$ of $A$ not equal to $(T)$ or $\mathfrak{l}$. Now we consider the Frobenius element for a degree-$1$ prime $\fp=(T-c)\neq (T) \text{ or }\fl$. The characteristic polynomial of ${\rm{Frob}}_{\fp}$ acting on $\phi[\fl]$ is $\bar{P}_{\phi,\mathfrak{p}}(x)=x^r+x^{r-1}-\bar{\mathfrak{p}}$. Thus we have the following factorization

$$x^r+x^{r-1}-\bar{\mathfrak{p}}= (x^{r-1}-\alpha_{\mathfrak{p}}x^{r-2}+\cdots+\xi)(x-\xi^{-1}{\bar{\mathfrak{p}}})\in \mathbb{F}_\mathfrak{l}[x].$$

Hence for distinct prime ideals $\mathfrak{p}_1$ and $\mathfrak{p}_2$ of degree 1, we can factorize the characteristic polynomial of $\bar{\rho}_{\phi,\mathfrak{l}}({\rm{Frob}}_{\mathfrak{p}_1})$ and $\bar{\rho}_{\phi,\mathfrak{l}}({\rm{Frob}}_{\mathfrak{p}_2})$, respectively. By computing their coefficients, we get 

$$
\left\{
\begin{array}{ccc}
\alpha_{\mathfrak{p}_1}+\xi^{-1}\bar{\mathfrak{p}}_1 & =\alpha_{\mathfrak{p}_2}+\xi^{-1}\bar{\mathfrak{p}}_2 & =-1\\
\alpha_{\mathfrak{p}_1}\xi^{-1}\bar{\mathfrak{p}}_1+\xi & =\alpha_{\mathfrak{p}_2}\xi^{-1}\bar{\mathfrak{p}}_2+\xi &=0
\end{array}
\right.
$$

$$
\Rightarrow
\left\{
\begin{array}{ccc}

\alpha_{\mathfrak{p}_1}&=&-1-\xi^{-1}\bar{\mathfrak{p}}_1\\
\alpha_{\mathfrak{p}_2}&=&-1-\xi^{-1}\bar{\mathfrak{p}}_2\\
\alpha_{\mathfrak{p}_1}\bar{\mathfrak{p}}_1&=&\alpha_{\mathfrak{p}_2}\bar{\mathfrak{p}}_2
\end{array}
\right.
$$

$$
\Rightarrow (-\xi-\bar{\mathfrak{p}}_1)\bar{\mathfrak{p}}_1=(-\xi-\bar{\mathfrak{p}}_2)\bar{\mathfrak{p}}_2
$$
$$
\Rightarrow -\xi(\bar{\mathfrak{p}}_1-\bar{\mathfrak{p}}_2)=(\bar{\mathfrak{p}}_1^2-\bar{\mathfrak{p}}_2^2).
$$
So $\xi \equiv -({\mathfrak{p}}_1+{\mathfrak{p}}_2)\ {\rm{mod}}\ \mathfrak{l}.$
As $q> 5$, there are at least three different  prime ideals of degree $1$ not equal to $(T)$ or $\mathfrak{l}$. Thus we can derive the above argument for any two such prime ideals, which implies that all these prime ideals are congurent to each other modulo $\mathfrak{l}$. This gives us a contradiction. The arguments for `` If $\chi$ is unramified at every prime $\mathfrak{p}$'' follows the same process as above.

In conclusion, we have shown that if there is an proper $G_F$-submodule $X$ of $\phi[\fl]$, then 
$$2 \leqslant{\rm{dim}}_{\F_\fl}X=d\leqslant r-2.$$ 

Therefore, we may assume in the subsection 5.1 and 5.2 that
\begin{enumerate}
\item[(a)] $2 \leqslant{\rm{dim}}_{\F_\fl}X=d\leqslant r-2.$
\item[(b)] $\deg_T\fl\geqslant 2$
\end{enumerate}
The following subsections will split into the cases (i) $X\cap\phi[\mathfrak{l}]^{\circ}=\{0\}$ and (ii) $ X\cap\phi[\mathfrak{l}]^{\circ}=\phi[\mathfrak{l}]^{\circ}$ to derive contradictions. Then the irreducibility for mod $\mathfrak{l}$ representations when $\mathfrak{l}\neq(T)$ is proved.

\subsection{The case $X\cap\phi[\mathfrak{l}]^{\circ}=\{0\}$}

If $X\cap\phi[\mathfrak{l}]^{\circ}=\{0\}$, then Proposition \ref{prop10} (i) shows the action of $I_\mathfrak{l}$ on $X$ is trivial. Now we consider the action of $I_T$ on $X$. By Lemma \ref{lem22}, there is a basis $\{w_1,w_2,\cdots,w_{r-1},z\}$ of $\phi[\mathfrak{l}]$ such that $\bar{\rho}_{\phi,\mathfrak{l}}(I_T)= \left\{\left(\begin{array}{cccc}1 &  &  & b_{1} \\ & \ddots &  & \vdots \\ &  & \ddots & b_{r-1} \\ &  &  & 1\end{array}\right),\ b_{i}\in \mathbb{F}_\mathfrak{l}\ \forall\ 1\leqslant\ i\leqslant\ r-1\right\}.$ Since $X$ is a proper submodule of $\phi[\mathfrak{l}]$, we have $X\subseteq {\rm{span}}\{w_1,w_2,\cdots,w_{r-1}\}$. Therefore, $I_T$ acts trivially on $X$. For places $\mathfrak{p}\neq(T)\  {\rm{or}}\ \mathfrak{l}$, we know that $I_\mathfrak{p}$ acts trivially on $\phi[\mathfrak{l}]$, hence $I_\mathfrak{p}$ acts trivially on $X$.

In conclusion, we have proved the following proposition:
\begin{prop}\label{prop25}
The action of $G_F$ on $X$  is unramified at every finite place. 
\end{prop}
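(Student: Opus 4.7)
The plan is to verify the assertion case by case, partitioning the finite places of $F$ according to their relation to $(T)$ and $\mathfrak{l}$. For each prime $\mathfrak{p}$ of $A$, we want to show that the inertia subgroup $I_\mathfrak{p} \leq G_F$ acts trivially on $X$.

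First I would dispose of the generic case $\mathfrak{p} \neq (T), \mathfrak{l}$. Because $\phi_T = T + \tau^{r-1} + T^{q-1}\tau^r$ has leading coefficient $T^{q-1}$, and because $T$ is a unit in $A_\mathfrak{p}$ whenever $\mathfrak{p} \neq (T)$, the Drinfeld module $\phi$ has good reduction at every such $\mathfrak{p}$. Proposition \ref{prop3} (the Drinfeld analogue of N\'eron--Ogg--Shafarevich) then gives $\rho_{\phi,\mathfrak{l}}(I_\mathfrak{p}) = 1$, so $I_\mathfrak{p}$ acts trivially on all of $\phi[\mathfrak{l}]$ and hence on $X \subseteq \phi[\mathfrak{l}]$.

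Next I would handle $\mathfrak{p} = \mathfrak{l}$. Here the assumption $X \cap \phi[\mathfrak{l}]^\circ = \{0\}$ is exactly what we need: the quotient map $\phi[\mathfrak{l}] \twoheadrightarrow \phi[\mathfrak{l}]/\phi[\mathfrak{l}]^\circ = \phi[\mathfrak{l}]^{et}$ embeds $X$ as an $\mathbb{F}_\mathfrak{l}[I_\mathfrak{l}]$-submodule of $\phi[\mathfrak{l}]^{et}$. By Proposition \ref{prop10}(i) the inertia group $I_\mathfrak{l}$ acts trivially on $\phi[\mathfrak{l}]^{et}$, so it acts trivially on $X$.

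The remaining case $\mathfrak{p} = (T)$ is the one that requires genuine work and is the main obstacle. The idea is to exploit the explicit shape of $\bar{\rho}_{\phi,\mathfrak{l}}(I_T)$ provided by Lemma \ref{lem22}. In the basis $\{w_1, \ldots, w_{r-1}, z\}$ coming from Tate uniformization, elements of $I_T$ act by matrices that fix every $w_i$ and send $z \mapsto z + \sum b_i w_i$. Suppose for contradiction that some $v = \sum a_i w_i + bz \in X$ has $b \neq 0$. For any $\sigma \in I_T$ one has $\sigma v - v = b \sum b_{\sigma,i} w_i \in X$; since Lemma \ref{lem22} asserts the inclusion is an equality, the tuples $(b_{\sigma,i})$ span $\mathbb{F}_\mathfrak{l}^{r-1}$ as $\sigma$ varies, forcing each $b w_i \in X$ and then $w_i \in X$ for all $i$. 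But then $bz = v - \sum a_i w_i \in X$ as well, so $z \in X$ and $X = \phi[\mathfrak{l}]$, contradicting properness. Therefore $X \subseteq \mathrm{span}_{\mathbb{F}_\mathfrak{l}}\{w_1, \ldots, w_{r-1}\}$, on which $I_T$ acts trivially, concluding the proof. The delicate point is invoking the full strength of Lemma \ref{lem22} (equality, not just inclusion), which is precisely why the earlier valuation computation of $v(z) = -|A/\mathfrak{l}|^{-(r-1)}$ was pushed through.
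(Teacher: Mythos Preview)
Your proof is correct and follows essentially the same approach as the paper. The paper handles the three cases in the same way but is terser at $\mathfrak{p}=(T)$: it simply asserts that since $X$ is a proper submodule, $X\subseteq\mathrm{span}\{w_1,\ldots,w_{r-1}\}$, whereas you spell out why this holds by using the full equality in Lemma~\ref{lem22} to force $X=\phi[\mathfrak{l}]$ from any vector with nonzero $z$-coefficient --- exactly the argument implicit in the paper's one-line claim.
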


Now we consider the ramification index at infinity of the extension $F(\phi[\mathfrak{l}])/F$.
Let $F_\infty=\mathbb{F}_q((\frac{1}{T}))$ be the local field of $F$ at $\infty$ with valuation $v_\infty$, and $|\cdot|_\infty$ be its corresponding absolute value. Let $\Lambda$ be the period lattice of the polynomial $\phi_T(x)=Tx+x^{q^{r-1}}+T^{q-1}x^{q^r}.$ From section 1.1 of \cite{Gek19}, we know $\Lambda$ is a discrete free $A$-submodule of $\hat{\bar{F}}_\infty$, the completed algebraic closure of $F_\infty$, of rank $r$. We also have 
$$e_{\Lambda}(Tz)=\phi_T(e_\Lambda(z))\ \ \ {\rm{for\ all\ }} z\in\hat{\bar{F}}_\infty,$$
where
$$e_\Lambda(z)=z\prod_{0\neq\lambda\in\Lambda}(1-z/\lambda).$$

Let $F_\infty(\Lambda)$ be the field extension generated by the period lattice $\Lambda$. From Proposition 1.2 of \cite{Gek19}, we have
$$F_\infty(\Lambda)=F_\infty({\rm{tor}}(\phi)),$$
where ${\rm{tor}}(\phi)=\bigcup_{0\neq a\in A}\phi[a]$. Let $\{\lambda_1,\lambda_2,\cdots,\lambda_r\}$ be a successive minimum basis of the lattice $\Lambda$ (see \cite{Gek19}, 1.3 for the definition). By Proposition 1.4 (i) of \cite{Gek19}, the spectrum $(|\lambda_1|_\infty,|\lambda_2|_\infty,\cdots,|\lambda_r|_\infty)$ is related to the Newton polygon of $\phi_T(x)$ with respect to the valuation $v_\infty$. The Newton polygon of $\phi_T(x)$ is a line segment with slope $\frac{2-q}{q^r-1}$. Hence, by Proposition 1.4(ii) in \cite{Gek19}, we have
$$|\lambda_1|_\infty=|\lambda_2|_\infty=\cdots=|\lambda_r|_\infty.$$

Let $\mu_i=e_\Lambda(\frac{\lambda_i}{T})$, the set $\{\mu_1,\mu_2,\cdots,\mu_r\}$ is a $\mathbb{F}_q$-basis of $\phi[T]$. By formula 1.3.2 of \cite{Gek19}, we have
$$|\mu_i|_\infty=|e_\Lambda(\frac{\lambda_i}{T})|_\infty=|\frac{\lambda_i}{T}|_\infty.$$
From the Newton polygon of $\phi_T(x)$, we can compute $v_\infty(\mu_i)=\frac{q-2}{q^r-1}$. 
Thus we can deduce 
$$v_\infty(\lambda_i)=v_\infty(\mu_i)-1=\frac{-q^r+q-1}{q^r-1}.$$
Now we can study the extension $F_{\infty}(\Lambda)/F_\infty$. We use the notation set up in \cite{Gek19}, section 2.1. In our case, $L=F_\infty$, $\mathbf{B}_1=\{\lambda_1,\lambda_2,\cdots,\lambda_r\}$, $\tau=t=1$, $L_\tau=F_\infty(\Lambda)$. By applying diagram 2.8.4 of \cite{Gek19} to our situation, we have the followings:
\begin{enumerate}
 \item $\tilde{L}_\tau=L_0(V_\tau)=F_\infty(\Lambda)$ 
 \item $L_{\tau-1}=L_0=F_\infty$
\item $V_\tau=\text{the $\mathbb{F}_q$-vector space generated by basis $e_0(B_1)=\{\lambda_1, \cdots, \lambda_r \}$}$ (see \cite{Gek19}, 2.4)
\item $V_\tau$ is pure with precise denominator of weight equal to $q^r-1$  by our computation on $v_\infty(\lambda_i)$, Definition 2.5 and Proposition 2.6 in \cite{Gek19}. 
\item $L'_{\tau-1}= \text{a completely ramified separable extension of $L_{\tau-1}$ of degree $q^r-1$}$. (see \cite{Gek19}, 2.8)
\item $M_\tau=L_{\tau-1}(V_\tau)\cap L'_{\tau-1}=F_\infty(\Lambda)\cap L'_{\tau-1}$, and $[M_\tau:L_{\tau-1}]=e(\tilde{L}_\tau| L_{\tau-1})=e(F_\infty(\Lambda)| F_\infty)$ (see \cite{Gek19}, 2.9.1)
\end{enumerate}
Therefore, $[M_\tau:L_{\tau-1}]$ must divide $q^r-1$. This implies the ramification index $e(F_\infty(\Lambda)| F_\infty)$ is some number prime to $p$. 
Hence the field extension $F_\infty({\rm{tor}}(\phi))/F_\infty$ is tamely ramified, which means the field extension $F({\rm{tor}}(\phi))/F$ is tamely ramified at $\infty$. Since $F(\phi[\mathfrak{l}])$ is an intermediate field of the extension $F({\rm{tor}}(\phi))/F$, the extension $F(\phi[\mathfrak{l}])/F$ is tamely ramified at $\infty$.
Combining with Proposition \ref{prop25}, we can make the following statement:
\begin{enumerate}

\item[($\ast$).]
The action of $G_F$ on $X$  is unramified at every finite place and tamely ramified at the place of infinity. 
\end{enumerate}

Finally, we claim that the field extension $\bar{\F}_qF(X)/\bar{\F}_qF$ is nontrivial. This leads to a non-trivial Galois cover of $\mathbb{P}^{1}_{{\bar{\mathbb{F}}_q}}$ which is unramified away from infinity and tamely ramified at infinity, hence is a contradiction by Hurwitz genus formula. 

Suppose the field extension $\bar{\F}_qF(X)/\bar{\F}_qF$ is trivial. Then for a element $0\neq w\in X\subset \bar{\F}_q(T)$, the valuation $v_T(w)$ is an integer. We may write 

$$w=\sum_{i=-m}^{n}c_iT^i, \text{ where $m, n\in \mathbb{Z}$, and $c_i\in\bar{\F}_q$ with $c_{-m}, c_{n}\in \bar{\F}_q^*$}.$$

We show that $w$ must lie in $\bar{\F}_q[T]$. Suppose $m\geqslant 1$, we have the valuation $v_T(w)=-m$. Now we compute the valuation of $\phi_T(w)=Tw+w^{q^{r-1}}+T^{q-1}w^{q^r}$. As $v_T(w^{q^{r-1}}) = q^{r-1}\cdot(-m)$ and $v_T(T^{q-1}w^{q^r})=q^r\cdot(-m)+(q-1)$, we have
$$q^r\cdot(-m)+(q-1)=v_T(\phi_T(w))<v_T(w)=-m.$$
Therefore, we can see that
$$v_T(\phi_{T^j}(w))<v_T(\phi_{T^{j-1}}(w))\text{ for any integer $j\geqslant 1$}.$$
Hence the term of $\phi_{T^{\deg_T(l)}}(w)$ with  smallest valuation  does  not appear in $\phi_{T^j}(w)$ for any integer $0\leqslant j< \deg_T(l)$. We can deduce from this that the term of $\phi_\fl(w)$ with smallest valuation cannot be eliminated, which contradicts to $w\in X\subset \phi[\fl]$. Thus $w$ must lie in $\bar{\F}_q[T]$. 

On the other hand, we can replace $v_T$ by $v_\infty$ and run the same process as above to show that $w$ must lie in $\bar{\F}_q^*$. Indeed, we assume $v_\infty(w)=-m\leqslant -1$. We can deduce that 
$$q^r\cdot(-m)-(q-1)=v_\infty(\phi_T(w))<v_\infty(w)=-m.$$
Hence we have $v_\infty(\phi_{T^j}(w))<v_\infty(\phi_{T^{j-1}}(w))\text{ for any integer $j\geqslant 1$}.$ The remains follow the same argument as above.

Now we have the element $0\neq w\in X\subset \phi[\fl]$ lies in $\bar{\F}_q^*$. However, this implies the term of $\phi_\fl(w)$ with largest $T$-degree is the leading term
$$T^{(q-1)( {\displaystyle  \sum_{i=1}^{\deg_T(l)}q^{r(i-1)} })}\cdot w^{q^{r\cdot \deg_T(l)}}.$$
Moreover, this term in $\phi_\fl(w)$ cannot be eliminated since the $T$-degree of the leading coefficient in $\phi_\fl(x)$ is strictly larger than the $T$-degree of any other coefficient. Thus we have a contradiction due to the fact that $w\in X\subset \phi[\fl]$. In conclusion, the field extension $\bar{\F}_qF(X)/\bar{\F}_qF$ is nontrivial.

\subsection{The case $ X\cap\phi[\mathfrak{l}]^{\circ}=\phi[\mathfrak{l}]^{\circ}$}

If $X\cap\phi[\mathfrak{l}]^{\circ}=\phi[\mathfrak{l}]^{\circ}$, then we have $\phi[\mathfrak{l}]^{\circ}\subseteq X\subsetneq \phi[\mathfrak{l}]$. We consider the following isogeny over $F$:
$$u:\phi \rightarrow \psi:=\phi/X.$$

Now we can study the action of $G_F$ on the proper $G_F$-submodule $u(\phi[\mathfrak{l}])$ of $\psi[\mathfrak{l}]$. We claim that the extension $F(u(\phi[\fl]))/F$ is unramified at every finite place and tamely ramified at the place of infinity.

The isogeny induces an $G_F$-equivariant isomorphism between rational Tate modules
$$u:V_\mathfrak{l}(\phi)\rightarrow V_\mathfrak{l}(\psi),$$
where $V_\mathfrak{l}(\phi):=T_\mathfrak{l}(\phi)\otimes F_\mathfrak{l}$. For any prime ideal $\mathfrak{p}\neq (T) \text{ or } \mathfrak{l}$. Since $\phi$ has good reduction at $\fp$ and $u$ is $G_F$-equivariant, we know $\psi$ has good reduction at $\mathfrak{p}$. Therefore, $G_F$ acts on $u(\phi[\mathfrak{l}])$ is unramified at $\mathfrak{p}\neq (T) \text{ or } \mathfrak{l}$.

For the prime $\mathfrak{p}=\mathfrak{l}$, the action of inertia group $I_\mathfrak{l}$ on $u(\phi[\mathfrak{l}])$ is determined by the action of $I_\mathfrak{l}$ on $\phi[\mathfrak{l}]/X$. Combining the condition $X\supseteq \phi[\mathfrak{l}]^{\circ}$ and Proposition \ref{prop10} (ii), we deduce that $I_\mathfrak{l}$ acts trivially on $\phi[\mathfrak{l}]/X$. Hence $I_\mathfrak{l}$ acts trivially on $u(\phi[\mathfrak{l}])$ as well.

For the place $\fp=\infty$, we proved the extension $F({\rm{tor}}(\phi))/F$ is tamely ramified at $\infty$ in the end of previous subsection. This implies the wild inertia group at $\infty$ acts trivially on the Tate module $T_\fl(\phi)$, hence the wild inertia group acts trivially on $V_\fl(\phi)$ as well. Now the above $G_F$-equivariant isomorphism $V_\fl(u)$ implies the wild inertia at $\infty$ acts trivially on $V_\fl(\psi)$. Therefore, we can deduce that the wild inertia group at $\infty$ acts trivially on $u(\phi[\fl])$. In other words, the extension $F(u(\phi[\fl]))/F$ is tamely ramified at infinity.

Finally, for the prime $\mathfrak{p}=(T)$, from Proposition \ref{red} and Lemma \ref{lem22-1} we know that $\psi$ has stable bad reduction at $\fp$.

Once $\psi$ has stable bad reduction at $\fp$, we know that $\psi$ has stable bad reduction at $\fp$ of rank $r-1$. Moreover, both $u$ and $\psi$ can be defined over $A_\fp$. As ${\rm{dim}}_{\F_\fl}X=d$, then the $\tau$-degree of $u$ is $d$. We may write
$$u=a_0+a_1\tau+\cdots+a_d\tau^d\text{ and }\psi_T=T+g_1\tau+\cdots+g_r\tau^r,$$
where $a_i$ and $g_j$ are elements in $A_\fp$ with $a_d$, $g_r\in A_\fp^*$. By comparing the leading coefficients of $u\phi_T=\psi_Tu$, we get 
$$T^{q^d(q-1)}=g_ra_d^{q^r-1}.$$
Therefore, we can conclude $g_r=T^{q^d(q-1)}\cdot m$ for some $m\in A_\fp^*$.

Now we can follow the same process as in Lemma \ref{lem22-1} to deduce the following:

There is a basis $\{w_1\cdots, w_{r-1}, z\}$ of $\psi[\mathfrak{l}]$ such that 
$$\bar{\rho}_{\psi,\mathfrak{l}}(I_\fp) \subseteq \left\{\left(\begin{array}{cccc}1 &  &  & b_{1} \\ & \ddots &  & \vdots \\ &  & \ddots & b_{r-1} \\ &  &  & 1\end{array}\right),\ b_{i}\in \mathbb{F}_\mathfrak{l}\ \forall\ 1\leqslant\ i\leqslant\ r-1\right\}.$$ with respect to the basis.

Furthermore, we can give an estimation on the size of $\bar{\rho}_{\psi,\mathfrak{l}}(I_\fp)$ by using the same process as in Lemma \ref{lem22}. The computation only involves in the leading coefficient $g_r$ of $\psi_T$. Because $g_r=T^{q^d(q-1)}\cdot m$ for some $m\in A_\fp^*$, the left hand side of equation (1) in the proof of Lemma \ref{lem22} becomes $$T^{(q-1)q^d( {\displaystyle  \sum_{i=1}^{\deg_T(\mathfrak{l})}q^{r(i-1)} })}.$$ Hence we can deduce the estimation

$$|\bar{\rho}_{\psi,\mathfrak{l}}(I_\fp)|\geqslant q^{(r-1)\deg_T\fl-d}.$$

\begin{claim}

$I_\fp$ acts trivially on $u(\phi[\fl])\subset \psi[\fl]$.

\end{claim}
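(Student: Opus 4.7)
The plan is to argue by contradiction. Suppose $I_\fp$ does not act trivially on $u(\phi[\fl])$; then I will combine the lower bound on $|\bar\rho_{\psi,\fl}(I_\fp)|$ just obtained with an upper bound on the size of a certain intersection to force $\deg_T\fl\leqslant 1$, contradicting the standing assumption (b).

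First I would fix the basis $\{w_1,\dots,w_{r-1},z\}$ of $\psi[\fl]$ already produced in the discussion preceding the claim: every $\sigma\in I_\fp$ acts on this basis by an upper-triangular unipotent matrix fixing each $w_i$ and sending $z\mapsto z+\sum_{i=1}^{r-1}v_{\sigma,i}w_i$ for some $v_\sigma\in\F_\fl^{r-1}$. Under assumptions (a) and (b), the recorded lower bound satisfies $(r-1)\deg_T\fl-d\geqslant 2(r-1)-(r-2)=r>0$, so $I_\fp$ does act nontrivially on $\psi[\fl]$, and the fixed subspace $(\psi[\fl])^{I_\fp}$ is then exactly $\mathrm{span}_{\F_\fl}\{w_1,\dots,w_{r-1}\}$, of codimension one.

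Next, assuming for contradiction that $u(\phi[\fl])\not\subseteq\mathrm{span}\{w_i\}$, I pick some $\alpha z+\beta\in u(\phi[\fl])$ with $\alpha\in\F_\fl^*$ and $\beta\in\mathrm{span}\{w_i\}$. By $G_F$-stability of $u(\phi[\fl])$, for each $\sigma\in I_\fp$ the difference $\sigma(\alpha z+\beta)-(\alpha z+\beta)=\alpha\sum v_{\sigma,i}w_i$ lies in $u(\phi[\fl])\cap\mathrm{span}\{w_i\}$. Dividing by $\alpha$, the additive image of $\sigma\mapsto v_\sigma$ inside $\F_\fl^{r-1}\cong\mathrm{span}\{w_i\}$ (which has order $|\bar\rho_{\psi,\fl}(I_\fp)|$) is contained in $u(\phi[\fl])\cap\mathrm{span}\{w_i\}$; the latter has $\F_\fl$-codimension exactly $1$ in $u(\phi[\fl])$ by our contrary assumption, so its cardinality is $q^{(r-d-1)\deg_T\fl}$. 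Comparing,
$$q^{(r-1)\deg_T\fl-d}\leqslant q^{(r-d-1)\deg_T\fl},$$
which simplifies to $d\cdot\deg_T\fl\leqslant d$. Since $d\geqslant 2$ by assumption (a), this forces $\deg_T\fl\leqslant 1$, contradicting (b).

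The only nontrivial ingredient is the transfer of the Tate-uniformization basis and of the size lower bound from $\phi$ to the isogenous $\psi$; this is precisely what the preceding analysis of the leading coefficient $g_r=T^{q^d(q-1)}m$ of $\psi_T$ achieves, and both ingredients are already in hand at the point where the claim is stated, so the dimension count above is all that remains.
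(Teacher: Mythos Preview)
Your proof is correct and follows the same underlying idea as the paper's: assume some element of $u(\phi[\fl])$ has nonzero $z$-component, observe that the differences $\sigma\cdot w-w$ for $\sigma\in\bar\rho_{\psi,\fl}(I_\fp)$ all land in the intersection $u(\phi[\fl])\cap\mathrm{span}\{w_1,\dots,w_{r-1}\}$, and derive a size contradiction against the lower bound $|\bar\rho_{\psi,\fl}(I_\fp)|\geqslant q^{(r-1)\deg_T\fl-d}$.

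The organizational difference is worth noting. The paper splits into the two cases $d\leqslant\deg_T\fl$ and $d>\deg_T\fl$, in each case iteratively picking $\sigma_1,\sigma_2,\dots$ so that the vectors $\sigma_i\cdot w-w$ are linearly independent, and then counting that together with $w$ these exceed $\dim_{\F_\fl}u(\phi[\fl])=r-d$. Your argument replaces this iterative construction by a single cardinality comparison: the entire additive group $\{\alpha v_\sigma\}$ already sits inside the intersection, whose dimension is forced to be exactly $r-d-1$, yielding $(r-1)\deg_T\fl-d\leqslant(r-d-1)\deg_T\fl$ and hence $\deg_T\fl\leqslant 1$ in one stroke. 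This bypasses the case split entirely and is cleaner; the paper's iterative version makes the linear-algebra content slightly more explicit but is logically equivalent. One small wording point: after ``dividing by $\alpha$'' what you literally get is $\{v_\sigma\}\subseteq\alpha^{-1}\bigl(u(\phi[\fl])\cap\mathrm{span}\{w_i\}\bigr)$, not $\{v_\sigma\}\subseteq u(\phi[\fl])\cap\mathrm{span}\{w_i\}$; since scaling by $\alpha^{-1}$ preserves cardinality this is harmless, but it would read more precisely to compare $|\{\alpha v_\sigma\}|$ directly with $|u(\phi[\fl])\cap\mathrm{span}\{w_i\}|$.
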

\begin{proof}[Proof of claim]

We split ${\rm{dim}}_{\F_\fl}X=d$ into two cases.
\begin{enumerate}

\item[$d\leqslant\deg_T(\fl)$:] In this case, we can deduce from the estimation $|\bar{\rho}_{\psi,\mathfrak{l}}(I_\fp)|\geqslant q^{(r-1)\deg_T\fl-d}$ that

$$|\bar{\rho}_{\psi,\mathfrak{l}}(I_\fp)|\geqslant q^{(r-2)\deg_T\fl}.$$

As ${\rm{dim}}_{\F_\fl}u(\phi[\fl])=r-d$ and $1 < d < r-1$, we know ${\rm{dim}}_{\F_\fl}u(\phi[\fl])\leqslant r-2$. Now we use the basis $\{w_1,\cdots, w_{r-1}, z\}$ to present vectors in the $\F_\fl$-space $u(\phi[\fl])$. Suppose there is an element $w\in u(\phi[\fl])$ with linear combination $c_1w_1+\cdots+c_{r-1}w_{r-1}+cz$ where $c_i\in \F_\fl$ and $c\in\F_\fl^*$. Since we have
$$\bar{\rho}_{\psi,\mathfrak{l}}(I_\fp) \subseteq \left\{\left(\begin{array}{cccc}1 &  &  & b_{1} \\ & \ddots &  & \vdots \\ &  & \ddots & b_{r-1} \\ &  &  & 1\end{array}\right),\ b_{i}\in \mathbb{F}_\mathfrak{l}\ \forall\ 1\leqslant\ i\leqslant\ r-1\right\}.$$
For every $\sigma=\left(\begin{array}{cccc}1 &  &  & b_{1,\sigma} \\ & \ddots &  & \vdots \\ &  & \ddots & b_{r-1,\sigma} \\ &  &  & 1\end{array}\right) \in \bar{\rho}_{\psi,\mathfrak{l}}(I_\fp)$,  we consider $$\sigma\cdot w-w=b_{1,\sigma}\cdot w_1+\cdots+b_{r-1,\sigma}\cdot w_{r-1}.$$ We may start with an element $\sigma_1\in \bar{\rho}_{\psi,\mathfrak{l}}(I_\fp)$ with $b_{1,\sigma_1},\cdots, b_{r-1,\sigma_1}$ not all zero. Hence there are at most $|\F_\fl|=q^{\deg_T\fl}$ many elements $\sigma\in \bar{\rho}_{\psi,\mathfrak{l}}(I_\fp)$ such that $\sigma\cdot w-w$ is linearly dependent with $\sigma_1\cdot w-w$. From our estimation $|\bar{\rho}_{\psi,\mathfrak{l}}(I_\fp)|\geqslant q^{(r-2)\deg_T\fl}$, we can find $\sigma_2\in \bar{\rho}_{\psi,\mathfrak{l}}(I_\fp)$ such that $\sigma_2\cdot w-w$ is linearly independent with $\sigma_1\cdot w-w$. Now there are at most $|\F_\fl|^2=q^{2\deg_T\fl}$ many elements $\sigma\in \bar{\rho}_{\psi,\mathfrak{l}}(I_\fp)$ such that $\sigma\cdot w-w$ is linearly dependent with $\{\sigma_1\cdot w-w, \sigma_2\cdot w-w\}$. From the estimation again, we can find $\sigma_3\in \bar{\rho}_{\psi,\mathfrak{l}}(I_\fp)$ such that $\{\sigma_1\cdot w-w, \sigma_2\cdot w-w, \sigma_3\cdot w-w\}$ is a linearly independent set. By following this procedure, we can pick $\sigma_1, \sigma_2, \cdots, \sigma_{r-2}\in \bar{\rho}_{\psi,\mathfrak{l}}(I_\fp)$ such that $$\{\sigma_i\cdot w-w\mid 1\leqslant i\leqslant r-2\}$$ is a linearly independent set. Therefore, $\sigma_i\cdot w-w$ together with $w$ produce $r-1$ many linearly independent vectors in $u(\phi[\fl])$, a contradiction. Thus all elements in $u(\phi[\fl])$ are linear combinations of the basis vectors $w_1, w_2, \cdots, w_{r-1}$, which implies that $I_\fp$ acts trivially on $u(\phi[\fl])$.

\item[$d>\deg_T(\fl)$:] In this case, we are in the situation where
$$1<\deg_T(\fl)<d\leqslant r-2.$$
Therefore, the estimation $|\bar{\rho}_{\psi,\mathfrak{l}}(I_\fp)|\geqslant q^{(r-1)\deg_T\fl-d}$ implies

$$|\bar{\rho}_{\psi,\mathfrak{l}}(I_\fp)|\geqslant q^{(r-1)\deg_T\fl-d}>q^{(r-1-d)\deg_T\fl}.$$

As ${\rm{dim}}_{\F_\fl}u(\phi[\fl])=r-d$, we use the basis $\{w_1,\cdots, w_{r-1}, z\}$ to present vectors in the $\F_\fl$-space $u(\phi[\fl])$ again. Again, we suppose there is an element $w\in u(\phi[\fl])$ with linear combination $c_1w_1+\cdots+c_{r-1}w_{r-1}+cz$ where $c_i\in \F_\fl$ and $c\in\F_\fl^*$. For every $\sigma\in \bar{\rho}_{\psi,\mathfrak{l}}(I_\fp)$,  we consider $$\sigma\cdot w-w=b_{1,\sigma}\cdot w_1+\cdots+b_{r-1,\sigma}\cdot w_{r-1}.$$
By the procedure in the previous case and our estimation $|\bar{\rho}_{\psi,\mathfrak{l}}(I_\fp)|>q^{(r-1-d)\deg_T\fl}$, we can find $r-d$ many elements $\sigma_1,\cdots, \sigma_{r-d}\in \bar{\rho}_{\psi,\mathfrak{l}}(I_\fp)$ such that the set 
$$\{\sigma_i\cdot w-w\mid 1\leqslant i\leqslant r-d\}$$
is linearly independent.
Thus $\sigma_i\cdot w-w$ together with $w$ produce $r-d+1$ many linearly independent vectors in $u(\phi[\fl])$, which is a contradiction because ${\rm{dim}}_{\F_\fl}u(\phi[\fl])=r-d$. Therefore, all elements in $u(\phi[\fl])$ are linear combinations of the basis vectors $w_1, w_2, \cdots, w_{r-1}$. Hence $I_\fp$ acts trivially on $u(\phi[\fl])$.

\end{enumerate}

\end{proof}

From our study of the ramifications of  $G_F$-action on $u(\phi[\fl])$, we conclude
\begin{lemma}
The action of $G_F$ on $u(\phi[\fl])$  is unramified at every finite place and tamely ramified at the place of infinity. 
\end{lemma}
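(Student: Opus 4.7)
The plan is to assemble the case-by-case work already developed in Section 5.2 into a unified verification at each place of $F$. The action of $G_F$ on $u(\phi[\fl])$ is the restriction of the action on $\psi[\fl]$ to a $G_F$-stable submodule, so it suffices to control the inertia action place by place and then invoke the isogeny $u$.

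For a finite place $\fp \neq (T), \fl$, I would use that $\phi$ has good reduction at $\fp$ together with the $G_F$-equivariant isomorphism $V_\fl(u): V_\fl(\phi) \to V_\fl(\psi)$ induced by $u$. Since $I_\fp$ acts trivially on $V_\fl(\phi)$ by Proposition \ref{prop3}, it acts trivially on $V_\fl(\psi)$, hence trivially on $\psi[\fl]$ and on the subspace $u(\phi[\fl])$. For $\fp = \fl$, I would observe that $X \supseteq \phi[\fl]^\circ$ implies the natural surjection $\phi[\fl] \twoheadrightarrow u(\phi[\fl]) \cong \phi[\fl]/X$ factors through $\phi[\fl]/\phi[\fl]^\circ \cong \phi[\fl]^{et}$, on which $I_\fl$ acts trivially by Proposition \ref{prop10}(i); hence $I_\fl$ acts trivially on $u(\phi[\fl])$.

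The place $\fp = (T)$ is the main obstacle, and here I would use the groundwork just laid. Proposition \ref{red} together with Lemma \ref{lem22-1} ensures $\psi$ has stable reduction at $(T)$, in fact of rank $r-1$, and by comparing leading coefficients in $u \phi_T = \psi_T u$ one finds the leading coefficient of $\psi_T$ is of the form $T^{q^d(q-1)} m$ with $m \in A_{(T)}^*$. Repeating the arguments of Lemmas \ref{lem22-1} and \ref{lem22} for $\psi$ in place of $\phi$ yields a basis of $\psi[\fl]$ with respect to which $\bar{\rho}_{\psi,\fl}(I_T)$ consists of unipotent matrices of the stated shape and satisfies the lower bound $|\bar{\rho}_{\psi,\fl}(I_T)| \geqslant q^{(r-1)\deg_T \fl - d}$. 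The claim just proved, treating separately the subcases $d \leqslant \deg_T(\fl)$ and $d > \deg_T(\fl)$ via a dimension-counting argument on $\{\sigma \cdot w - w\}$, then yields that $I_T$ acts trivially on $u(\phi[\fl])$.

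Finally, at the place at infinity, I would appeal to the Drinfeld--Tate analytic uniformization and Gekeler's results \cite{Gek19} applied to the period lattice $\Lambda$ of $\phi_T = T + \tau^{r-1} + T^{q-1}\tau^r$. The Newton polygon of $\phi_T(x)$ at $\infty$ is a single segment of slope $\tfrac{2-q}{q^r-1}$, so a successive-minimum basis of $\Lambda$ has equal absolute values, and the purity computation via diagram 2.8.4 of \cite{Gek19} shows the ramification index $e(F_\infty(\Lambda) | F_\infty)$ divides $q^r-1$, which is prime to $p$. Hence $F_\infty(\mathrm{tor}(\phi))/F_\infty$ is tamely ramified, so the wild inertia at $\infty$ acts trivially on $T_\fl(\phi)$ and therefore, via the $G_F$-equivariant isomorphism $V_\fl(u)$, trivially on $T_\fl(\psi)$ and on $u(\phi[\fl])$. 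Combining the four local analyses gives exactly the unramified-at-every-finite-place, tamely-ramified-at-infinity conclusion asserted in the lemma.
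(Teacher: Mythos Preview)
Your proposal is correct and follows essentially the same approach as the paper: the paper's proof of this lemma is precisely the place-by-place analysis laid out in Section~5.2 preceding the lemma statement, handling $\fp\neq (T),\fl$ via good reduction and the $G_F$-equivariant isomorphism $V_\fl(u)$, handling $\fp=\fl$ via $X\supseteq\phi[\fl]^\circ$ and Proposition~\ref{prop10}(i), handling $\fp=(T)$ via Proposition~\ref{red}, the leading-coefficient computation, the inertia-size estimate, and the dimension-counting Claim, and handling $\infty$ by transferring the tame ramification of $F_\infty(\mathrm{tor}(\phi))/F_\infty$ (established in Section~5.1 via Gekeler's results) through $V_\fl(u)$. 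The only cosmetic difference is that you recapitulate the Gekeler argument at $\infty$ rather than citing Section~5.1, but the content is identical.
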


Finally, we prove the field extension $\bar{\F}_qF(u(\phi[\fl]))/\bar{F}_qF$ is nontrivial. This leads to a non-trivial Galois cover of $\mathbb{P}^{1}_{{\bar{\mathbb{F}}_q}}$ which is unramified away from infinity and tamely ramified at infinity, which is a contradiction by Hurwitz genus formula. 

Suppose the field extension $\bar{\F}_qF(u(\phi[\fl]))/\bar{F}_qF$ is trivial, then for any element $\alpha\in\phi[\fl]$ the valuation $v_\infty(u(\alpha))$ is an integer. On the other hand, we take the minimum successive basis $\{\lambda_1,\cdots, \lambda_r\}$ from the previous subsection and set $\alpha_i=e_\Lambda(\frac{\lambda_i}{l})$. The set $\{\alpha_1,\cdots, \alpha_r\}$ is a $\F_\fl$-basis of $\phi[\fl]$. Moreover, we have 
$$v_\infty(\alpha_i)=v_\infty(e_\Lambda(\frac{\lambda_i}{l}))=v_\infty(\frac{\lambda_i}{l})=\deg_T(l)+\frac{-q^r+q-1}{q^r-1}.$$ Now we compute the valuation $v_\infty(u(\alpha_i))$. As $u=a_0+a_1\tau+\cdots+a_d\tau^d\in F\{\tau\}$, we have
$$u(\alpha_i)=a_0\alpha_i+a_1\alpha_i^q+\cdots+a_d\alpha_i^{q^d}.$$
For each nonzero term $a_j\alpha_i^{q^j}$ of $u(\alpha_i)$ where $0\leqslant j \leqslant d< r-1$, its valuation is
$$v_\infty(a_j\alpha_i^{q^j})=v_\infty(a_j)+q^j\cdot v_\infty(\alpha_i)=\text{some integer}+q^j\cdot \frac{-q^r+q-1}{q^r-1}=\text{some integer}+\frac{q^j\cdot(q-2)}{q^r-1}.$$
Therefore, the valuation of each nonzero term of $u(\alpha_i)$ has distinct fractional part. Thus $v_\infty(u(\alpha_i))$ is not an integer by the strong triangle inequality  because the valuation of each summand $v_\infty(a_j\alpha_i^{q^j})$ is a distinct fraction. This shows the field extension $\bar{\F}_qF(u(\phi[\fl]))/\bar{F}_qF$ is nontrivial.

In conclusion, we have proved the irreducibility of the mod $\fl$ Galois representations for $\fl\neq (T)$. Combining with Theorem \ref{surjmodl}, we have the following corollary:

\begin{cor}\label{cor26}
Let $q=p^e$ be a prime power, $A=\mathbb{F}_q[T]$, and $F=\mathbb{F}_q(T)$. Assume $r\geqslant 3$ is a prime number, there is a constant $c=c(r)\in \mathbb{N}$ depending only on $r$ such that for $p>c(r)$ the following statement is true ${\rm{:}}$

Let $\phi$ be a Drinfeld $A$-module over $F$ of rank $r$ with generic characteristic, which  is defined by $\phi_T=T+\tau^{r-1}+T^{q-1}\tau^r$. Let $\mathfrak{l}\neq (T)$ be a finite place of $F$. Then the mod $\mathfrak{l}$ representation 
$$\bar{\rho}_{\phi,\mathfrak{l}}:G_F\longrightarrow {\rm{Aut}}(\phi[\mathfrak{l}])\cong {\rm{GL}}_r(\mathbb{F}_{\mathfrak{l}})$$
 is surjective.
 \end{cor}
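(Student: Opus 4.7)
The plan is to assemble the corollary directly from the tools developed in the preceding subsections, with essentially no new content beyond a clean combination of (i) Theorem \ref{surjmodl} (which reduces surjectivity of $\bar{\rho}_{\phi,\mathfrak{l}}$ to irreducibility, assuming good reduction and $p$ sufficiently large), and (ii) the irreducibility analysis carried out in Sections 5.1 and 5.2.

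First I would verify that the hypothesis of Theorem \ref{surjmodl} is satisfied at every finite place $\mathfrak{l}\neq(T)$. Since the leading coefficient of $\phi_T$ is $T^{q-1}$, the only finite place of $F$ where $\phi$ has bad reduction is $(T)$; for any $\mathfrak{l}\neq(T)$ the coefficient $T^{q-1}$ is a unit in $A_{\mathfrak{l}}$, so $\phi$ has good reduction at $\mathfrak{l}$. By Theorem \ref{surjmodl}, applied with the constant $c(r)$ provided there, the representation $\bar{\rho}_{\phi,\mathfrak{l}}$ is either reducible or surjective.

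Next I would rule out reducibility. Assume for contradiction that $\phi[\mathfrak{l}]$ contains a proper nonzero $G_F$-submodule $X$. The preliminary discussion at the start of Section 5 shows $X$ cannot have dimension $1$ or $r-1$ (by a direct calculation comparing characteristic polynomials $\bar{P}_{\phi,(T-c)}(x)=x^r+x^{r-1}-\bar{\mathfrak{p}}$ at several degree-one primes), and it also rules out $\mathfrak{l}$ of degree one and the case $\mathfrak{l}=(T)$ by the explicit irreducibility of $\phi_{T-c}(x)/x$. Thus we may assume $2\leqslant d:=\dim_{\mathbb{F}_{\mathfrak{l}}} X\leqslant r-2$ and $\deg_T\mathfrak{l}\geqslant 2$. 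By Corollary \ref{cor11} the intersection $X\cap\phi[\mathfrak{l}]^{\circ}$ is either $\{0\}$ or all of $\phi[\mathfrak{l}]^{\circ}$. In Section 5.1 the first case is shown to force $F(X)/F$ to be unramified at every finite place and tamely ramified at $\infty$, producing a nontrivial tame étale cover of $\mathbb{P}^1_{\bar{\mathbb{F}}_q}$ away from $\infty$, which contradicts Hurwitz. In Section 5.2 the second case is handled by passing to the isogenous Drinfeld module $\psi=\phi/X$ and showing that $G_F$ acts on $u(\phi[\mathfrak{l}])\subset\psi[\mathfrak{l}]$ with the same ramification profile, again contradicting Hurwitz once one verifies $\bar{\mathbb{F}}_q F(u(\phi[\mathfrak{l}]))/\bar{\mathbb{F}}_q F$ is nontrivial via the $v_\infty$-valuation of $u(\alpha_i)$ for a successive minimum basis.

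Since both reducibility subcases lead to contradictions, $\bar{\rho}_{\phi,\mathfrak{l}}$ must be irreducible, whence surjective by Theorem \ref{surjmodl}. No new step is really an obstacle at the level of the corollary itself; the substantive work has already been done in the two subsections, and the only thing to check is that the constant $c(r)$ from Theorem \ref{surjmodl} is large enough to accommodate the hypotheses $p>r!$ used throughout Section 5 — if not, one simply enlarges $c(r)$ to the maximum of the constants required. The main conceptual obstacle, completed earlier, was the Hurwitz-genus-formula argument in the bad-reduction case, which relied on the careful Tate-uniformization analysis in Lemma \ref{lem22-1}, Lemma \ref{lem22}, and Proposition \ref{red}.
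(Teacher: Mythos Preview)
Your proposal is correct and follows exactly the paper's approach: the corollary is stated immediately after Sections~5.1 and~5.2 with nothing more than the sentence ``Combining with Theorem~\ref{surjmodl}, we have the following corollary,'' so the substantive content is precisely the combination of (i) good reduction at every $\mathfrak{l}\neq(T)$, (ii) the irreducibility established in the two subsections, and (iii) Theorem~\ref{surjmodl}. Your remark about possibly enlarging $c(r)$ is unnecessary, since in the proof of Theorem~\ref{surjmodl} the constant is already taken to be $\max\{r!,\tilde{c}!\}$.
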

 
 \subsection{The case when $\mathfrak{l}=(T)$}
 For the mod $(T)$ Galois representation $\bar{{\rho}}_{\phi,T}: G_F \longrightarrow {\rm{Aut}}(\phi[T])\cong {\rm{GL_r}}(\mathbb{F}_q)$, we are actually computing the Galois group ${\rm{Gal}}(F(\phi[T])/F)$ of the field extension obtained by adjoining the roots of $\phi_{T}(x)=Tx+x^{q^{r-1}}+T^{q-1}x^{q^r}$ to $F$. This question has been studied by Abhyankar \cite{Abh94}.  Here we use the same notation as in \cite{Abh94}, Theorem 3.2 (3.2.3).
 
 Let $k_0=\mathbb{F}_q$, $K=k_0(\frac{1}{T})$, and 
 $$V=V(Y)=Y^{q^r-1}+\frac{1}{T^{q-1}}Y^{q^{r-1}-1}+\frac{1}{T^{q-2}}.$$ 
We have $\mu=r-1$ and ${\rm{GCD}}(\nu,\tau)=1$, where $\nu=1+q+\cdots+q^{r-1}$ and $\tau=1+q+\cdots+q^{r-2}$. Further, we have $C_{1}=(\frac{1}{T})^{q-1}$ and $C_r=(\frac{1}{T})^{q-2}$. Thus $\rho=q-1$ and $\sigma=q-2$. Certainly we have $\rho\neq \frac{\sigma(\nu-\tau)}{\nu}$. Thus we can apply \cite{Abh94}, Theorem 3.2 (3.2.3). Since ${\rm{GCD}}(\sigma,q-1)=1$, the theorem implies 
$${\rm{Gal}}({\mathbb{F}}_q(T)(\phi[T])/{\mathbb{F}}_q(T))={\rm{GL}}_r(\mathbb{F}_q).$$

As a summary of section 5, we have proved the surjectivity of mod $\fl$ Galois representation $\bar{\rho}_{\phi,\fl}$ for any prime ideal $\fl$ of $A$ under the assumption on $q$ in Theorem \ref{surjmodl}

\section{Surjectivity of $\mathfrak{l}$-adic Galois representations}

Similar to the rank $3$ case, we wish to apply \cite{PR09}, Proposition 4.1 to prove surjectivity of $\mathfrak{l}$-adic representations. We separate $\mathfrak{l}$ into two cases:
\begin{enumerate}
\item[Case 1.] 
$\mathfrak{l}\neq (T)$\\
Our proof of the equality
$\bar{\rho}_{\phi,\mathfrak{l}}(I_T)= \left\{\left(\begin{array}{cccc}1 &  &  & b_{1} \\ & \ddots &  & \vdots \\ &  & \ddots & b_{r-1} \\ &  &  & 1\end{array}\right),\ b_{i}\in \mathbb{F}_\mathfrak{l}\ \forall\ 1\leqslant\ i\leqslant\ r-1\right\}$ only restrict $\mathfrak{l}$ to be prime to $(T)$. Hence we can prove the above equality for mod $\mathfrak{l}^2$ representation. In other words, we have 

$$\bar{\rho}_{\phi,\mathfrak{l}^2}(I_T)= \left\{\left(\begin{array}{cccc}1 &  &  & b_{1} \\ & \ddots &  & \vdots \\ &  & \ddots & b_{r-1} \\ &  &  & 1\end{array}\right),\ b_{i}\in (A/\mathfrak{l}^2)\ \forall\ 1\leqslant\ i\leqslant\ r-1\right\}$$
Therefore, the mod $\mathfrak{l}^2$ representation certainly contains a non-scalar matrix that becomes identity after modulo $\mathfrak{l}$.
\item[Case 2.]$\mathfrak{l}=(T)$\\
We consider the following diagram and focus on the representations of decomposition subgroups.
$$
\begin{array}{cccc}
\bar{\rho}_{\phi,\mathfrak{l}^2}(G_F) &\xrightarrow{\rm{modulo}\ \mathfrak{l}}& \bar{\rho}_{\phi,\mathfrak{l}}(G_F)&={\rm{GL}}_r(\mathbb{F}_\mathfrak{l}).\\
\cup&&\cup&\\
\bar{\rho}_{\phi,\mathfrak{l}^2}(G_{F_{(T)}}) &\xrightarrow{\rm{modulo}\ \mathfrak{l}}& \bar{\rho}_{\phi,\mathfrak{l}}(G_{F_{(T)}})&
\end{array}
$$

It's clear that the modulo $\mathfrak{l}$ homomorphism $\bar{\rho}_{\phi,\mathfrak{l}^2}(G_{F_{(T)}}) \xrightarrow{\rm{modulo}\ \mathfrak{l}} \bar{\rho}_{\phi,\mathfrak{l}}(G_{F_{(T)}})$ is surjective. Suppose it is injective, then we can conclude that the splitting field of $\phi_{\mathfrak{l}^2}(x)$ and of $\phi_{\mathfrak{l}}(x)$ are isomorphic over $F_{(T)}$. Now we compare the Newton's polygons of $\phi_{\mathfrak{l}^2}(x)/x$ and $\phi_{\mathfrak{l}}(x)$. 

$$
\begin{array}{ccc}
\phi_{T^2}&=&T^2+T(T^{q^{r-1}-1}+1)\tau^{r-1}+T^q(T^{q^r-1}+1)\tau^r+\tau^{2r-2}\\
                &  &\ \ \ \ \ \ +T^{q-1}(T^{(q-1)(q^{r-1}-1)}+1)\tau^{2r-1}+T^{(q-1)(q^r+1)}\tau^{2r}.
\end{array}
$$

There is some root of $ \phi_{\mathfrak{l}^2}(x)/x$ has valuation equal to $-\frac{1}{q^{2r-2}}$, but there is no element in the splitting field of $\phi_{\mathfrak{l}}(x)$ with the same valuation. Thus the splitting fields of both polynomials are not isomorphic, which implies $\bar{\rho}_{\phi,\mathfrak{l}^2}(G_{F_(T)}) \xrightarrow{\rm{modulo}\ \mathfrak{l}} \bar{\rho}_{\phi,\mathfrak{l}}(G_{F_{(T)}})$ has nontrivial kernel.

Now we prove such a nontrivial element $\bar{\rho}_{\phi,\mathfrak{l}^2}(\sigma)\in \bar{\rho}_{\phi,\mathfrak{l}^2}(G_{F_{(T)}})$ can not be a scalar matrix. Suppose $\bar{\rho}_{\phi,\mathfrak{l}^2}(\sigma)\in \bar{\rho}_{\phi,\mathfrak{l}^2}(G_{F_{(T)}})$ is a scalar matrix, then there is some $a\in \mathbb{F}_q^*$ such that $\sigma$ maps every root $\alpha$ of $\phi_{T^2}(x)/x$ to $\phi_{1+Ta}(\alpha)$. Furthermore, as $\sigma$ lies in the decomposition subgroup  $G_{F_{(T)}}$, $\sigma$ should preserve the valuation $v_T$. We pick a root $\alpha$ with valuation equal to $-\frac{1}{q^{2r-2}}$, then compare the valuation of $\sigma$ with $\sigma(\alpha)$.
$$-\frac{1}{q^{2r-2}}=v_T(\alpha)\neq v_T(\sigma(\alpha))=v_T(\phi_{1+Ta}(\alpha))=v_T([1+Ta]\alpha+a\alpha^{q^{r-1}}-[aT^{q-1}]\alpha^{q^r})=-\frac{1}{q^{r-1}}.$$
Therefore $\sigma$ cannot be a scalar matrix, and the mod $(T^2)$ representation contains a non-scalar matrix that becomes identity after modulo $\mathfrak{l}$.
\end{enumerate}
Now we can apply \cite{PR09}, Proposition 4.1 to show that the $\mathfrak{l}$-adic representation is surjective for every prime ideal $\mathfrak{l}$ of $A$.

\section{Adelic surjectivity of Galois representations}
In this section, we make a further assumption that $q\equiv 1 \mod r$. The proof of adelic surjectivity is similar to the proof for rank $2$ and $3$ cases in \cite{Zy11} and \cite{Ch20}.

\begin{lem}\label{lem28}
For each finite place $\mathfrak{l}$ of $F$, ${\rm{SL}}_r(A_\mathfrak{l})$ is equal its commutator subgroup. The only normal subgroup of ${\rm{SL}}_r(A_\mathfrak{l})$ with simple quotient is 
$$N:=\left\{B\in{\rm{SL}}_r(A_\mathfrak{l})| B\equiv\delta \cdot I_r \mod\mathfrak{l}, \ {\rm{where}}\ \delta\in \mathbb{F}_{\mathfrak{l}}\ {\rm{satisfies}}\ \delta^r=1              \right\}.$$
\end{lem}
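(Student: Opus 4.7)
The plan is to prove the two assertions of Lemma~\ref{lem28} separately: the perfectness of $\mathrm{SL}_r(A_\fl)$, and the classification of its closed normal subgroups with simple quotient. Throughout, ``normal subgroup'' should be read as ``closed normal subgroup'' (the natural topological context for the application).

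For perfectness, I would invoke the standard fact that $\mathrm{SL}_r(R)$ over any commutative local ring $R$ is generated by elementary transvections $E_{ij}(a) = I_r + a\,e_{ij}$ with $i \neq j$ and $a \in R$, which follows from Gaussian elimination since the non-diagonal entries may be cleared using unit pivots. Because $r \geq 3$, one may apply the Steinberg commutator identity
\[
[E_{ij}(a),E_{jk}(1)] = E_{ik}(a) \quad \text{for distinct indices } i,j,k,
\]
which realises every elementary matrix as a commutator. Hence $\mathrm{SL}_r(A_\fl) = [\mathrm{SL}_r(A_\fl),\mathrm{SL}_r(A_\fl)]$.

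For the classification, I set up the congruence filtration $K_i := \ker\!\bigl(\mathrm{SL}_r(A_\fl) \twoheadrightarrow \mathrm{SL}_r(A_\fl/\fl^i)\bigr)$ for $i \geq 1$. Writing $M \in K_i$ as $M = I_r + \pi^i X$ for a uniformiser $\pi$ of $A_\fl$, the map $M \mapsto X \bmod \fl$ yields an identification $K_i/K_{i+1} \cong \mathfrak{sl}_r(\F_\fl)$ as abelian groups (the trace-zero condition comes from $\det M = 1$), which is an elementary abelian $p$-group. Consequently $K_1 = \varprojlim_i K_1/K_i$ is a pro-$p$ group. Since $p > r!$ forces $\gcd(r,p) = 1$, and since $r \geq 3$, the group $\mathrm{PSL}_r(\F_\fl)$ is simple; therefore the normal subgroups of $\mathrm{SL}_r(\F_\fl)$ are exactly $\{1\}$, the centre $Z := Z(\mathrm{SL}_r(\F_\fl)) = \{\delta I_r : \delta \in \F_\fl,\ \delta^r = 1\}$, and $\mathrm{SL}_r(\F_\fl)$ itself, of which only $Z$ has simple quotient $\mathrm{PSL}_r(\F_\fl)$. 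Since $N$ is by definition the reduction-mod-$\fl$ preimage of $Z$, one immediately has $\mathrm{SL}_r(A_\fl)/N \cong \mathrm{PSL}_r(\F_\fl)$, proving existence.

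For uniqueness, let $H$ be a closed normal subgroup of $\mathrm{SL}_r(A_\fl)$ with simple quotient $G$. The crux is to show $H \supseteq K_1$: if not, then the image of $K_1$ in $G$ is a nontrivial closed normal subgroup and hence, by simplicity, equals $G$, so $G \cong K_1/(H \cap K_1)$ is a simple quotient of the pro-$p$ group $K_1$, forcing $G \cong \mathbb{Z}/p\mathbb{Z}$. But this contradicts perfectness of $\mathrm{SL}_r(A_\fl)$, whose quotients must be perfect and hence cannot be a nontrivial abelian group. So $H \supseteq K_1$, and $H/K_1$ is a normal subgroup of $\mathrm{SL}_r(\F_\fl)$ with simple quotient; the classification above forces $H/K_1 = Z$, and therefore $H = N$. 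The key technical inputs are the identification $K_i/K_{i+1} \cong \mathfrak{sl}_r(\F_\fl)$ (which makes $K_1$ pro-$p$) and the simplicity of $\mathrm{PSL}_r(\F_\fl)$; both are standard, so I expect the main task to be merely careful bookkeeping around closedness and the characteristic condition $p \nmid r$, rather than any substantial mathematical obstacle.
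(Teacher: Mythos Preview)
Your proof is correct and takes a genuinely different, more elementary route than the paper's. For perfectness, the paper works through the congruence filtration $S^i$ and shows each graded piece $H^{[i]}$ of the commutator equals $S^{[i]}$ by invoking the irreducibility of $\mathfrak{sl}_r(\F_\fl)$ as a $\mathrm{GL}_r(\F_\fl)$-module (a result from \cite{PR09} requiring $p\geqslant 5$); your argument via elementary matrices and the Steinberg commutator identity $[E_{ij}(a),E_{jk}(1)]=E_{ik}(a)$ is shorter and needs only $r\geqslant 3$ and that $A_\fl$ is local. For uniqueness, the paper lists the composition factors of $\mathrm{SL}_r(A_\fl)$ (namely $\mathrm{PSL}_r(\F_\fl)$, copies of $\mathbb{Z}/p\mathbb{Z}$, and possibly $\mathbb{Z}/r\mathbb{Z}$) and then runs a Jordan--H\"older-type argument to exclude $N'\neq N$; your observation that $K_1\subseteq H$ must hold---since otherwise the simple quotient would be a quotient of the pro-$p$ group $K_1$, hence $\mathbb{Z}/p\mathbb{Z}$, contradicting perfectness---is more direct and bypasses the case analysis. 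Both proofs rest on the same filtration and the identification $K_i/K_{i+1}\cong\mathfrak{sl}_r(\F_\fl)$, but the paper uses its module structure under conjugation whereas you use only that it is an elementary abelian $p$-group.
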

\begin{proof}

Let $H$ be the commutator subgroup of ${\rm{SL}}_r(A_\mathfrak{l})$. It's a closed normal subgroup of 
${\rm{SL}}_r(A_\mathfrak{l})$ and ${\rm{GL}}_r(A_\mathfrak{l})$. We define $S^0:={\rm{SL}}_r(A_\mathfrak{l})$ and for $i\geqslant1$, we set 
$S^i:=\{s\in{\rm{SL}}_r(A_\mathfrak{l})\ |\ s\equiv 1\mod \mathfrak{l}^i    \}.$ For $i\geqslant 0$, define $H^i=H\cap S^i$. For $i\geqslant 0$, we define $S^{[i]}:=S^i/S^{i+1}$ and $H^{[i]}:=H^i/H^{i+1}$. There is a natural injection form $H^{[i]}$ to $S^{[i]}$ and we claim that $H^{[i]}=S^{[i]}$ for all $i\geqslant 0$.

For $i=0$, modulo $\mathfrak{l}$ induces an isomorphism $S^{[0]}\xrightarrow{\sim} {\rm{SL}}_r(\mathbb{F}_\mathfrak{l})$ and the image of $H^{[0]}$ under this isomorphism becomes the commutator subgroup of ${\rm{SL}}_r(\mathbb{F}_\mathfrak{l})$. It's well known that ${\rm{SL}}_r(\mathbb{F}_\mathfrak{l})$ is quasi-simple, i.e. ${\rm{SL}}_r(\mathbb{F}_\mathfrak{l})$ equals to its commutator subgroup and the quotient of ${\rm{SL}}_r(\mathbb{F}_\mathfrak{l})$ by its center $Z({\rm{SL}}_r(\mathbb{F}_\mathfrak{l}))$ is a simple group. Therefore, we have $H^{[0]}=S^{[0]}$.

Now we fix $i\geqslant 1$, let $\mathfrak{sl}_r(\mathbb{F}_\mathfrak{l})$ be the additive subgroup in $M_r(\mathbb{F}_\mathfrak{l})$ consisting of matrices with trace $0$. We have the following isomorphism:
$$
\begin{array}{ccc}
S^{[i]}&\xrightarrow{\sim}&\mathfrak{sl}_r(\mathbb{F}_\mathfrak{l})\\
\left[1+l^iy\right] & \mapsto & [y]
\end{array},
$$
 where $l$ is the monic polynomial in $A$ that generates $\mathfrak{l}$. Consider ${\rm{GL}}_r(A_\mathfrak{l})$ acting on both sides via conjugation action, it factors through ${\rm{GL}}_r(\mathbb{F}_\mathfrak{l})$. By \cite{PR09} Proposition 2.1, $\mathfrak{sl}_r(\mathbb{F}_\mathfrak{l})$ is an irreducible ${\rm{GL}}_r(\mathbb{F}_\mathfrak{l})$-module (here we used the assumption $q=p^e$ is a prime power with $p\geqslant5$). On the other hand, $H^{[i]}$ injects into $S^{[i]}$ and $H$ is normal in ${\rm{GL}}_r(A_\mathfrak{l})$ imply that $H^{[i]}$ is also stable under ${\rm{GL}}_r(\mathbb{F}_\mathfrak{l})$-action. Once we can show that $H^{[i]}$ is nontrivial, we have $H^{[i]}=S^{[i]}$ for all $i\geqslant0$ and so $H=S^0$. Consider the commutator map $S^0\times S^i\rightarrow H^i$ that maps $(g,h)$ to $ghg^{-1}h^{-1}$. This induces a map $S^{[0]}\times S^{[i]}\rightarrow H^{[i]}$. Combining with the isomorphism $S^{[i]} \xrightarrow{\sim} \mathfrak{sl}_r(\mathbb{F}_\mathfrak{l})$, we obtain the following map:
$${\rm{SL}}_r(\mathbb{F}_\mathfrak{l})\times \mathfrak{sl}_r(\mathbb{F}_\mathfrak{l})\rightarrow \mathfrak{sl}_r(\mathbb{F}_\mathfrak{l}),\ \ \ \ (s,X)\mapsto sXs^{-1}-X.$$
This map is not a zero map, so we have $H^{[i]}=S^{[i]}$ for all $i\geqslant0$ and $H=S^0$.

Now let $N'$ be a normal subgroup of ${\rm{SL}}_r(A_\mathfrak{l})$ with simple quotient. We consider the subgroup $S^1\triangleleft {\rm{SL}}_r(A_\mathfrak{l})$. Note that $S^1$ is a pro-$p$ group by the definition of $S^i$ and $S^0/S^1={\rm{SL}}_r(\mathbb{F}_\mathfrak{l})$ is quasi-simple. Thus there is a composition series of ${\rm{SL}}_r(A_\mathfrak{l})$ and its composition factors are ${\rm{SL}}_r(\mathbb{F}_\mathfrak{l})/Z({\rm{SL}}_r(\mathbb{F}_\mathfrak{l})),\ \mathbb{Z}/p\mathbb{Z}$ (comes from the composition factors of $S^1$) and $\mathbb{Z}/r\mathbb{Z}$ (comes from composition factors of $Z({\rm{SL}}_r(\mathbb{F}_\mathfrak{l}))$ if it is nontrivial). As ${\rm{SL}}_r(A_\mathfrak{l})$ equals to its commutator, it has no abelian quotient. Thus ${\rm{SL}}_r(A_\mathfrak{l})/N'\cong {\rm{SL}}_r(\mathbb{F}_\mathfrak{l})/Z({\rm{SL}}_r(\mathbb{F}_\mathfrak{l}))$. On the other hand, we also have ${\rm{SL}}_r(A_\mathfrak{l})/N\cong {\rm{SL}}_r(\mathbb{F}_\mathfrak{l})/Z({\rm{SL}}_r(\mathbb{F}_\mathfrak{l}))$ Therefore, we get ${\rm{SL}}_r(A_\mathfrak{l})/N\cong {\rm{SL}}_r(A_\mathfrak{l})/N'$ and so $N'=N$. Otherwise, we may have $N \subsetneq NN' \triangleleft {\rm{SL}}_r(A_\mathfrak{l})$. Furthermore, we prove $NN'\neq {\rm SL}_r(A_\fl)$. Suppose $NN'={\rm SL}_r(A_\fl)$, then we have
$${\rm PSL}_r(\F_\fl)\cong {\rm SL}_r(A_\fl)/N'\cong NN'/N'\cong N/{N\cap N'}.$$
On the other hand, we look at the composition factors of ${\rm SL}_r(A_\fl)$. We have ${\rm{SL}}_r(A_\mathfrak{l})/N\cong {\rm{SL}}_r(\mathbb{F}_\mathfrak{l})/Z({\rm{SL}}_r(\mathbb{F}_\mathfrak{l}))\cong {\rm PSL}_r(\F_\fl)$, and the composition factor ${\rm PSL}_r(\F_\fl)$ appears only once. Therefore, the composition factors of $N$ are all abelian, either $\mathbb{Z}/p\mathbb{Z}$ or $\mathbb{Z}/r\mathbb{Z}$. It is impossible to have a quotient $N/N\cap N'$ of $N$ isomorphic to ${\rm PSL}_r(\F_\fl)$. Thus $NN'$ is a proper subgroup sits between ${\rm SL}_r(A_\fl)$ and $N$, which contradicts to the fact that ${\rm{SL}}_r(A_\mathfrak{l})/N\cong {\rm{SL}}_r(\mathbb{F}_\mathfrak{l})/Z({\rm{SL}}_r(\mathbb{F}_\mathfrak{l}))$ is simple.
\end{proof}

\begin{lem}\label{lem18}{\rm{(\cite{Zy11}, Lemma A.4)}}
Let $B_1$ and $B_2$ be finite groups and suppose that $H$ is a subgroup of $B_1\times B_2$ for which the two projections $p_1:H\rightarrow B_1$ and $p_2:H\rightarrow B_2$ are surjective. Let $N_1$ be the kernel of $p_2$ and $N_2$ be the kernel of $p_1$. We may view $N_1$ as a normal subgroup of $B_1$ and $N_2$ as a normal subgroup of $B_2$. Then the image of $H$ in $B_1/N_1\times B_2/N_2$ is the graph of the isomorphism $B_1/N_1\xrightarrow{\sim} B_2/N_2$.
\end{lem}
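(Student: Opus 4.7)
The plan is to give a self-contained verification of this instance of Goursat's lemma. The key is to construct explicitly the isomorphism $\phi : B_1/N_1 \xrightarrow{\sim} B_2/N_2$ whose graph will coincide with the image of $H$ in $B_1/N_1 \times B_2/N_2$; once $\phi$ is built, the statement about the image of $H$ is immediate from the construction.

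First I would identify $N_1$ with a subgroup of $B_1$ in the obvious way: an element of $N_1 = \ker p_2 \subseteq H$ has the form $(n_1, e)$, and I identify it with $n_1 \in B_1$; similarly $N_2 \subseteq B_2$. Normality of $N_1$ in $B_1$ (not merely in the projection of $H$, which is $B_1$ here anyway) uses the surjectivity of $p_1$: given any $b_1 \in B_1$, lift to some $(b_1, b_2) \in H$; conjugation of $(n_1, e) \in N_1$ by $(b_1, b_2)$ produces $(b_1 n_1 b_1^{-1}, e)$, which lies in $H$ and hence in $N_1$. The same argument applied to $p_2$ gives $N_2 \triangleleft B_2$.

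Next I would define the candidate isomorphism $\phi : B_1/N_1 \to B_2/N_2$ by choosing, for each $b_1 \in B_1$, any preimage $(b_1, b_2) \in H$ under $p_1$ (which exists by surjectivity of $p_1$), and setting $\phi(b_1 N_1) := b_2 N_2$. The main step is to check that $\phi$ is well defined: if $(b_1, b_2)$ and $(b_1, b_2')$ both lie in $H$, then their ratio $(e, b_2(b_2')^{-1})$ lies in $H$, forcing $b_2(b_2')^{-1} \in N_2$; and replacing $b_1$ by $b_1 n_1$ with $n_1 \in N_1$ does not affect the coset $b_2 N_2$ for the same reason, since $(n_1, e) \in H$. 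From this construction $\phi$ is clearly a homomorphism; it is surjective by the surjectivity of $p_2$, and it is injective because $\phi(b_1 N_1) = N_2$ means there exists $(b_1, b_2) \in H$ with $b_2 \in N_2$, so that $(e, b_2) \in H$, hence $(b_1, e) = (b_1, b_2)(e, b_2)^{-1} \in H$ and therefore $b_1 \in N_1$.

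Finally, by the very definition of $\phi$, the image of $H$ in $B_1/N_1 \times B_2/N_2$ is $\{(b_1 N_1, \phi(b_1 N_1)) : b_1 \in B_1\}$, i.e., the graph of $\phi$. The only mildly delicate point anywhere in the proof is the well-definedness of $\phi$; everything else is a routine diagram chase, and no input beyond the hypotheses of the lemma is required.
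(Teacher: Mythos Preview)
Your proof is correct and is the standard verification of Goursat's lemma; each step (normality of the $N_i$, well-definedness of $\phi$, and the bijectivity check) goes through as you describe. Note that the paper does not supply its own proof of this lemma at all: it simply cites \cite{Zy11}, Lemma A.4, so there is nothing in the paper to compare your argument against beyond confirming that your self-contained proof establishes exactly the statement being invoked.
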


\begin{lem}\label{lem19}{\rm{(\cite{Zy11}, Lemma A.6)}}
Let $S_1,S_2,\cdots,S_k$ be finite groups with no non-trivial abelian quotients. Let $H$ be a subgroup of $S_1\times\cdots\times S_k$ such that each projection $H\rightarrow S_i\times S_j$ $(1\leqslant i< j \leqslant k)$ is surjective. Then $H=S_1\times\cdots\times S_k$.
\end{lem}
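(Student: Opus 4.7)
The plan is to argue by induction on the number $k$ of factors. The base case $k=2$ is immediate, since the hypothesis says exactly that $H \to S_1 \times S_2$ is surjective. For the inductive step with $k \geq 3$, I would first reduce to a two-factor situation by showing that the projection $\pi : H \to S_1 \times \cdots \times S_{k-1}$ is surjective. Its image $\pi(H)$ inherits surjective pair projections $\pi(H) \to S_i \times S_j$ for $1 \leq i < j \leq k-1$ (they coincide with the corresponding pair projections of $H$ itself), so the inductive hypothesis gives $\pi(H) = S_1 \times \cdots \times S_{k-1}$.

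Now $H$ sits inside $B_1 \times B_2$ with $B_1 := S_1 \times \cdots \times S_{k-1}$ and $B_2 := S_k$, and surjects onto each factor. Applying Lemma \ref{lem18} produces normal subgroups $N_1 \triangleleft B_1$ and $N_2 \triangleleft B_2$ and an isomorphism $Q := B_1/N_1 \xrightarrow{\sim} B_2/N_2$ whose graph is the image of $H$ in $Q \times Q$. The entire problem reduces to showing $Q = 1$: for then $N_1 = B_1$ and $N_2 = B_2$, which means $B_1 \times \{1\} \subseteq H$ and $\{1\} \times B_2 \subseteq H$, hence $H = B_1 \times B_2$ as desired.

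To prove $Q=1$, denote by $\phi_j : S_j \to Q$ the map $S_j \hookrightarrow B_1 \twoheadrightarrow Q$ for $j \leq k-1$. The subgroups $\phi_j(S_j)$ pairwise commute in $Q$ and together generate $Q$. The crucial use of the pair-surjectivity hypothesis is this: fix $j \leq k-1$ and $s_j \in S_j$; by surjectivity of $H \to S_j \times S_k$, every $s_k \in S_k$ arises as the $k$-th coordinate of some $h \in H$ whose $j$-th coordinate is $s_j$, and under the Goursat correspondence the image of $\pi(h)$ in $Q$ coincides with that of $s_k$. Letting $s_k$ range over $S_k$ forces $\prod_{i \neq j} \phi_i(S_i) = Q$, so in particular $\phi_j(S_j) \subseteq \prod_{i \neq j} \phi_i(S_i)$. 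Because pairwise commutation makes the right-hand side commute with $\phi_j(S_j)$, we conclude $\phi_j(S_j) \subseteq Z(Q)$. Running $j$ over $\{1, \ldots, k-1\}$ shows $Q$ is generated by central elements, hence abelian. Since $Q$ is a quotient of $S_k$ and $S_k$ has no non-trivial abelian quotient, $Q = 1$.

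The main obstacle I expect is the middle step: two-factor Goursat alone is insufficient, since for $k=2$ a diagonal subgroup provides a genuine counterexample. What rescues the argument is the pair-surjectivity hypothesis (stronger than mere surjectivity onto each factor), used exclusively to force each $\phi_j(S_j)$ into $Z(Q)$; the perfectness of $S_k$ then finishes the job.
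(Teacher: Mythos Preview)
Your proof is correct. The paper does not give its own proof of this lemma---it is simply quoted from \cite{Zy11}, Lemma A.6---so there is no in-paper argument to compare against. Your induction on $k$ combined with Goursat's lemma (Lemma \ref{lem18}) is the standard route: the key point, which you handle correctly, is that pair-surjectivity onto $S_j\times S_k$ (with the $j$-th coordinate set to the identity) forces the Goursat quotient $Q$ to be generated by the images $\phi_i(S_i)$ with $i\neq j$, whence each $\phi_j(S_j)$ lies in $Z(Q)$, $Q$ is abelian, and perfectness of $S_k$ gives $Q=1$. One small cosmetic remark: in your crucial step it is cleanest to fix $s_j=1$ from the start, so that the image of $\pi(h)$ in $Q$ lies in $\prod_{i\neq j}\phi_i(S_i)$ directly and you avoid passing through a coset.
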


\begin{lem}\label{lem29}
Let $\mathfrak{l}_1$ and $\mathfrak{l}_2$ be two prime ideals of $A$, and set $\mathfrak{a}=\mathfrak{l}_1\mathfrak{l}_2$. Let $\phi$ be the Drinfeld module as before, and let $H=\bar{\rho}_{\phi,\mathfrak{a}}(G_F) \subseteq{\rm{GL}}_r(A/\mathfrak{a})$. Then $H$ satisfies the following properties:
\begin{enumerate}
\item $\det(H)=(A/\mathfrak{a})^*$;
\item the projections $p_1':H'\rightarrow {\rm{SL}}_r(A/\mathfrak{l}_1)$ and $p_2':H'\rightarrow {\rm{SL}}_r(A/\mathfrak{l}_2)$ are surjective, where $H'=H\cap {\rm{SL}}_r(A/\mathfrak{a})$;
\item the subring of $A/\mathfrak{a}$ generated by the set 
$$\mathcal{S}=\{{\rm{tr}}(h)^r/\det(h)\ |\ h\in H\}\cup\{\det(h)/{\rm{tr}}(h)^r\ |\ h\in H\ {\rm{with}}\ {\rm{tr}}(h)\in (A/\mathfrak{a})^*  \}$$ is exactly $A/\mathfrak{a}$.
\end{enumerate}
These three properties will imply $H={\rm{GL}}_r(A/\mathfrak{a})$.
\end{lem}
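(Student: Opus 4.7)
The plan is to deduce $H = {\rm GL}_r(A/\mathfrak{a})$ in two stages: first I will show that $H'$ exhausts all of ${\rm SL}_r(A/\mathfrak{a})$, and then use condition~(1) on the determinant to bootstrap to the full linear group. By the Chinese remainder theorem, ${\rm GL}_r(A/\mathfrak{a}) \cong {\rm GL}_r(A/\mathfrak{l}_1) \times {\rm GL}_r(A/\mathfrak{l}_2)$ and similarly ${\rm SL}_r(A/\mathfrak{a}) \cong G_1 \times G_2$ where $G_i := {\rm SL}_r(A/\mathfrak{l}_i)$. Condition~(2) places $H'$ inside $G_1 \times G_2$ with both coordinate projections surjective, so Goursat's lemma (Lemma~\ref{lem18}) produces normal subgroups $N_i \triangleleft G_i$ and an isomorphism $\sigma : G_1/N_1 \xrightarrow{\sim} G_2/N_2$ whose graph is the image of $H'$ in $G_1/N_1 \times G_2/N_2$.

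Under the running assumption $q \equiv 1 \pmod{r}$ with $r$ an odd prime, the center $Z(G_i)$ has order $r$, and as used in the proof of Lemma~\ref{lem28}, the normal subgroups of ${\rm SL}_r(\mathbb{F}_{\mathfrak{l}_i})$ are exactly $\{1\}$, $Z(G_i)$, and $G_i$. Thus each $N_i$ lies in this list. If both $N_i = G_i$, then $H' = G_1 \times G_2$ as desired. Otherwise both $N_i$ are proper (since $G_1/N_1 \cong G_2/N_2$), and $\sigma$ becomes a nontrivial isomorphism between two groups each of type $G_i$ or ${\rm PSL}_r(\mathbb{F}_{\mathfrak{l}_i})$. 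By the classification of isomorphisms of finite classical groups, any such $\sigma$ is induced by a field isomorphism $\phi : \mathbb{F}_{\mathfrak{l}_1} \xrightarrow{\sim} \mathbb{F}_{\mathfrak{l}_2}$ (forcing in particular $\deg_T \mathfrak{l}_1 = \deg_T \mathfrak{l}_2$), possibly composed with the inverse-transpose involution and an inner automorphism.

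The trace-determinant invariant $f(h) := {\rm tr}(h)^r/\det(h)$ descends to a function on ${\rm PGL}_r$, and both inner automorphisms and the inverse-transpose involution preserve $f$ after applying the coefficient automorphism, giving $f(\sigma(\bar h)) = \phi(f(\bar h))$ on the relevant quotient. Promoting the graph structure on $H'$ to $H$ (using that scalar matrices do not affect $f$, so the Goursat constraint descends through the scalar action controlled by condition~(1)), every element of $\mathcal{S}$ would then be forced to lie in the graph subring $R_\phi := \{(x,\phi(x)) : x \in \mathbb{F}_{\mathfrak{l}_1}\}$, a proper $\mathbb{F}_q$-subalgebra of $\mathbb{F}_{\mathfrak{l}_1} \times \mathbb{F}_{\mathfrak{l}_2} = A/\mathfrak{a}$. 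This contradicts condition~(3), so in fact $H' = {\rm SL}_r(A/\mathfrak{a})$.

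For the second stage, $H$ contains $H' = {\rm SL}_r(A/\mathfrak{a})$ and $\det(H) = (A/\mathfrak{a})^*$ by condition~(1), so the short exact sequence $1 \to {\rm SL}_r(A/\mathfrak{a}) \to {\rm GL}_r(A/\mathfrak{a}) \xrightarrow{\det} (A/\mathfrak{a})^* \to 1$ immediately forces $H = {\rm GL}_r(A/\mathfrak{a})$ by counting orders. The main obstacle is the middle step: translating the abstract Goursat graph constraint on $H'$ into a concrete subring constraint on $\mathcal{S}$ that violates condition~(3). In particular, one must handle the twisted case where $\sigma$ involves the inverse-transpose involution, and verify that the constraint propagates from $H'$ all the way to $H$ (using condition~(1) to govern the central scalar behavior) so that condition~(3) can actually be invoked.
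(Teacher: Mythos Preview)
Your overall strategy matches the paper's---Goursat's lemma, classification of the induced isomorphism, then a subring constraint violating~(3)---but there is a genuine error at the inverse-transpose step.

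You assert that the inverse-transpose involution preserves $f(h)=\mathrm{tr}(h)^r/\det(h)$. This is false. If $A$ has characteristic polynomial $x^r+a_1x^{r-1}+\cdots+a_r$, then $f(A)$ is built from $a_1$ and $a_r$, whereas $\mathrm{tr}(A^{-1})=-a_{r-1}/a_r$, so $f((A^{T})^{-1})=f(A^{-1})$ is built from $a_{r-1}$ and $a_r$. There is no reason for these to agree, and a diagonal example such as $\mathrm{diag}(1,\ldots,1,\lambda)$ with $r\geq 3$ already shows $f(A)\neq f(A^{-1})$. So in the twisted case you do \emph{not} get $\mathcal{S}\subseteq R_\phi$, and condition~(3) cannot be invoked the way you claim.

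This is exactly why the paper does not rely on $f$-invariance to exclude the inverse-transpose type. Instead it uses the extra information that $H=\bar\rho_{\phi,\mathfrak{a}}(G_F)$ for this particular Drinfeld module: for a degree-one prime $\mathfrak{p}$ the Frobenius has characteristic polynomial $x^r+x^{r-1}-\bar{\mathfrak{p}}$ (trace $-1\neq 0$), while its inverse has characteristic polynomial $x^r-\bar{\mathfrak{p}}^{-1}x-\bar{\mathfrak{p}}^{-1}$ (trace $0$). In the inverse-transpose scenario the two coordinates of $\bar\rho_{\phi,\mathfrak{a}}(\mathrm{Frob}_{\mathfrak{p}})$ would lie in the same $\mathrm{PGL}_r$-coset, forcing a nonzero-trace coset to coincide with a zero-trace coset---a contradiction. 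It is this Frobenius computation, not an invariance property of $f$, that eliminates the twisted case; the lemma is stated for the specific $H$ coming from $\phi$, and the proof uses that.

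After the twisted case is correctly excluded, the remainder of your argument (graph subring contradicts~(3); then ${\rm SL}_r\subseteq H$ plus~(1) gives ${\rm GL}_r$) is fine and agrees with the paper.
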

\begin{proof}
Condition $1$ follows from the fact that $\det\circ\bar{\rho}_{\phi,\fa}=\bar{\rho}_{C,\fa}$, the mod $\fa$ Galois representation of the Carlitz module. Condition $2$ follows from our result in section 5 and 6 that $\bar{\rho}_{\phi,\fl}$ is surjective for any prime ideal $\fl$. For condition $3$, we may take $c\in \mathbb{F}_q^*$ such that $\mathfrak{p}=T-c$ not equal to $\mathfrak{l}_1$ or $\mathfrak{l}_2$. By looking at the characteristic polynomial of $\bar{\rho}_{\phi,\fa}({\rm{Frob}}_\fp)$, we have 
$$\frac{\det(\bar{\rho}_{\phi,\mathfrak{a}}({\rm{Frob}}_{\mathfrak{p}}))}{{\rm{tr}}(\bar{\rho}_{\phi,\mathfrak{a}}({\rm{Frob}}_{\mathfrak{p}}))^r}\equiv \frac{T-c}{1^r}\equiv T-c \mod \mathfrak{a}.$$
The subring generated by these $T-c$ is equal to $A/\mathfrak{a}$.

The following shows how these three properties imply $H={\rm{GL}}_r(A/\mathfrak{a})$. Let $N_1'$ be the kernel of $p_2'$ and $N_2'$ be the kernel of $p_1'$. We may view $N_i'$ as a normal subgroup of ${\rm{SL}}_r(\mathbb{F}_{\mathfrak{l}_i})$. Lemma \ref{lem18} then implies the image of $H'$ in ${\rm{SL}}_r(\mathbb{F}_{\mathfrak{l}_1})/N_1'\times {\rm{SL}}_r(\mathbb{F}_{\mathfrak{l}_2})/N_2'$ is the graph of a group isomorphism ${\rm{SL}}_r(\mathbb{F}_{\mathfrak{l}_1})/N_1'\xrightarrow{\sim} {\rm{SL}}_r(\mathbb{F}_{\mathfrak{l}_2})/N_2'$. If one of $N_1'$ or $N_2'$ is the whole group, then the group isomorphism will force the other one to be the whole group. Therefore, if $N_1'={\rm{SL}}_r(\mathbb{F}_{\mathfrak{l}_1})$ or $N_2'={\rm{SL}}_r(\mathbb{F}_{\mathfrak{l}_2})$, we have $H'={\rm{SL}}_r(\mathbb{F}_{\mathfrak{l}_1})\times {\rm{SL}}_r(\mathbb{F}_{\mathfrak{l}_2})$. Combining with condition 1, we have $H={\rm{GL}}_r(A/\mathfrak{a})$.

Now we suppose both $N_i'$ are proper normal subgroups of ${\rm{SL}}_r(\mathbb{F}_{\mathfrak{l}_i})$. By the proof of Lemma \ref{lem28}, we can see that $N_i'\subseteq Z({\rm{SL}}_r(\mathbb{F}_{\mathfrak{l}_i}))$ for $i=1,2$. Since the order $|Z({\rm{SL}}_r(\mathbb{F}_{\mathfrak{l}_i}))|$ is either $1$ or $r$, the order $|N_i'|$ is either $1$ or $r$. Comparing the order on both sides of the isomorphism ${\rm{SL}}_r(\mathbb{F}_{\mathfrak{l}_1})/N_1'\xrightarrow{\sim} {\rm{SL}}_r(\mathbb{F}_{\mathfrak{l}_2})/N_2'$, we  can show that $|\mathbb{F}_{\mathfrak{l}_1}|=|\mathbb{F}_{\mathfrak{l}_2}|$, i.e. $\mathbb{F}_{\mathfrak{l}_1}$ and $\mathbb{F}_{\mathfrak{l}_2}$ are isomorphic fields.
For $i=1$ or $2$, define the projection $p_i: H\rightarrow {\rm{GL}}_r(\mathbb{F}_{\mathfrak{l}_i})$. Let $N_1$ be kernel of $p_2$ and $N_2$ be kernel of $p_1$, we may also view $N_i$ as normal subgroup of ${\rm{GL}}_r(\mathbb{F}_{\mathfrak{l}_i})$. Lemma \ref{lem18} then implies the image of $H$ into ${\rm{GL}}_r(\mathbb{F}_{\mathfrak{l}_1})/N_1\times {\rm{GL}}_r(\mathbb{F}_{\mathfrak{l}_2})/N_2$ is the graph of a group isomorphism ${\rm{GL}}_r(\mathbb{F}_{\mathfrak{l}_1})/N_1\xrightarrow{\sim} {\rm{GL}}_r(\mathbb{F}_{\mathfrak{l}_2})/N_2$.

As $N_i/N_i'\cong N_i{\rm{SL}}_r(\mathbb{F}_{\mathfrak{l}_i})/{\rm{SL}}_r(\mathbb{F}_{\mathfrak{l}_i})\subseteq {\rm{GL}}_r(\mathbb{F}_{\mathfrak{l}_i})/{\rm{SL}}_r(\mathbb{F}_{\mathfrak{l}_i})\cong \mathbb{F}_{\mathfrak{l}_i}^*$, we see that $N_i/N_i'$ and $N_i'$ are abelian. Thus $N_i$ is a solvable normal subgroup of ${\rm{GL}}_r(\mathbb{F}_{\mathfrak{l}_i})$. Furthermore, we see that $N_i\cong N_iZ({\rm{GL}}_r(\mathbb{F}_{\mathfrak{l}_i}))/Z({\rm{GL}}_r(\mathbb{F}_{\mathfrak{l}_i}))$ is a solvable normal subgroup of ${\rm{PGL}}_r(\mathbb{F}_{\mathfrak{l}_i})$. Hence we can deduce a normal series
$$\{1\}\lhd N_iZ({\rm{GL}}_r(\mathbb{F}_{\mathfrak{l}_i}))/Z({\rm{GL}}_r(\mathbb{F}_{\mathfrak{l}_i}))  \lhd {\rm{PGL}}_r(\mathbb{F}_{\mathfrak{l}_i}).$$
If ${\rm{PSL}}_r(\mathbb{F}_{\mathfrak{l}_i})={\rm{PGL}}_r(\mathbb{F}_{\mathfrak{l}_i})$, then we have
$N_i\subseteq Z({\rm{GL}}_r(\mathbb{F}_{\mathfrak{l}_i}))$ by the simplicity of ${\rm{PSL}}_r(\mathbb{F}_{\mathfrak{l}_i})$.
If ${\rm{PSL}}_r(\mathbb{F}_{\mathfrak{l}_i})\neq{\rm{PGL}}_r(\mathbb{F}_{\mathfrak{l}_i})$, then we consider the comsposition series 
$$\{1\}\lhd {\rm{PSL}}_r(\mathbb{F}_{\mathfrak{l}_i})  \lhd {\rm{PGL}}_r(\mathbb{F}_{\mathfrak{l}_i}).$$
Its factors are ${\rm{PSL}}_r(\mathbb{F}_{\mathfrak{l}_i})$ and $C_r$, the cyclic group of order $r$. Thus by the Jordan-H\"older theorem, $N_i\cong N_iZ({\rm{GL}}_r(\mathbb{F}_{\mathfrak{l}_i}))/Z({\rm{GL}}_r(\mathbb{F}_{\mathfrak{l}_i}))\cong C_r$. Let $\delta$ be a generator of $N_i\subset {\rm{GL}}_r(\mathbb{F}_{\mathfrak{l}_i})$. We know that $\delta$ satisfies the polynomial $x^r-1$. Since $q \equiv 1 \mod r$, $x^r-1$ splits over $\mathbb{F}_{{\mathfrak{l}_i}}$. The minimal polynomial of $\delta$ over $\mathbb{F}_{\mathfrak{l}_i}$ is a product of distinct degree one polynomials. Now we prove that the minimal polynomial of $\delta$ has to be a degree one polynomial.  Let $\zeta$ be a generator of $\F_q^*$ and $\omega=\zeta^{\frac{q-1}{r}}$. The roots of $x^r-1$ in $\F_q$ are $\omega^i$ with $0\leqslant i\leqslant r-1$. Suppose the minimal polynomial of $\delta\in N_i\subset {\rm{GL}}_r(\mathbb{F}_{\mathfrak{l}_i})$ is not a degree one polynomial, then $\delta$ can be written as a diagonal matrix with diagonal entries of the form $\omega^i$ but not all the same. Let's assume there are two distinct diagonal entries $\omega_1, \omega_2\in\{\omega^i, 0\leqslant i\leqslant r-1\}$ in $\delta$. The proof for general situation will follow the same process. Now we may write $\delta$ as
$$\delta=\left(\begin{array}{cccccc}\omega_1 &  &  &  &  &  \\ & \ddots &  &  &  &  \\ &  & \omega_1 &  &  &  \\ &  &  & \omega_2 &  &  \\ &  &  &  & \ddots &  \\ &  &  &  &  & \omega_2\end{array}\right).$$

Now since $N_i$ is a normal subgroup of ${\rm{GL}}_r(\mathbb{F}_{\mathfrak{l}_i})$, we can conjugate $\delta$ by a suitable permutation matrix to get a matrix in $N_i$ where two distinct diagonal entries interchanged while the other diagonal entries fixed. Namely, we have

$$\left(\begin{array}{cccccccc}\omega_1 &  &  &  &  &  &  &  \\ & \ddots &  &  &  &  &  &  \\ &  & \omega_1 &  &  &  &  &  \\ &  &  & \omega_2 &  &  &  &  \\ &  &  &  & \omega_1 &  &  &  \\ &  &  &  &  & \omega_2 &  &  \\ &  &  &  &  &  & \ddots &  \\ &  &  &  &  &  &  & \omega_2\end{array}\right)\in N_i$$

As $N_i$ is generated by $\delta$, this matrix is equal to $\delta^j$ for some $1\leqslant j\leqslant r-1$. By comparing the fixed entries in $\delta$ and $\delta^j$, we can deduce that $j$ must be equal to $1$ because $\omega^i$ has order equal to $r$ for $1\leqslant i\leqslant r-1$ (here we use $r$ is a prime number).  This implies $\omega_1=\omega_2$, a contradiction. Therefore, the minimal polynomial of $\delta$ is a degree one polynomial. Hence $\delta$ is a scalar matrix and we have $N_i\subseteq Z({\rm{GL}}_r(\mathbb{F}_{\mathfrak{l}_i}))$ in both cases.

 By taking further quotient, the image of $H$ into ${\rm{PGL}}_r(\mathbb{F}_{\mathfrak{l}_1})\times {\rm{PGL}}_r(\mathbb{F}_{\mathfrak{l}_2})$ is the graph of a group isomorphism 
$$\alpha: {\rm{PGL}}_r(\mathbb{F}_{\mathfrak{l}_1})\xrightarrow{\sim} {\rm{PGL}}_r(\mathbb{F}_{\mathfrak{l}_2}).$$

By \cite{Die80} Theorem 2, $\alpha$ can be lifted to an isomorphism
$$\tilde{\alpha}: {\rm{GL}}_r(\mathbb{F}_{\mathfrak{l}_1})\xrightarrow{\sim} {\rm{GL}}_r(\mathbb{F}_{\mathfrak{l}_2}).$$ 

Let $\sigma:\mathbb{F}_{\mathfrak{l}_1} \xrightarrow{\sim} \mathbb{F}_{\mathfrak{l}_2}$ be a field isomorphism and $\chi: {\rm{GL}}_r(\mathbb{F}_{\mathfrak{l}_1})\rightarrow \mathbb{F}_{\mathfrak{l}_2}^*$ be a homomorphism. Now we are able to create two group homomorphisms ${\rm{GL}}_r(\mathbb{F}_{\mathfrak{l}_1})\xrightarrow{\sim} {\rm{GL}}_r(\mathbb{F}_{\mathfrak{l}_2})$:

\begin{enumerate}
\item[(i)] $A\mapsto \chi(A)gA^{\sigma}g^{-1}$,
\item[(ii)] $A\mapsto \chi(A)g((A^{T})^{-1})^{\sigma}g^{-1}$,
\end{enumerate}

where $A\in{\rm{GL}}_r(\mathbb{F}_{\mathfrak{l}_1})$, $A^{\sigma}$ is the matrix that applies $\sigma$ to each entry of $A$, and $g\in {\rm{GL}}_r(\mathbb{F}_{\mathfrak{l}_2})$. By \cite{Die80} Theorem 1, there are $\sigma$, $\chi$, and $g$ such that $\tilde{\alpha}$ is  one of the homomorphisms above.
\ \\

\begin{lemma}: $\tilde{\alpha}$ must be of the first type. 
\end{lemma}
\begin{proof}
Suppose $\tilde{\alpha}$ is of second type, then we choose a degree $1$ prime ideal $\mathfrak{p}$ of $A$ different from $\mathfrak{l}_1$ and $\mathfrak{l}_2$. We consider the image of $\bar{\rho}_{\phi,\mathfrak{a}}({\rm{Frob}}_{\mathfrak{p}})\in H$ in ${\rm{PGL}}_r(\mathbb{F}_{\mathfrak{l}_1})\times {\rm{PGL}}_r(\mathbb{F}_{\mathfrak{l}_2})$ under
$$
\begin{array}{ccc}
\ \ \ \ \ \ \  H &\hookrightarrow &{\rm{PGL}}_r(\mathbb{F}_{\mathfrak{l}_1})\times {\rm{PGL}}_r(\mathbb{F}_{\mathfrak{l}_2})\\
\ \ \ \ \ \ \  \bar{\rho}_{\phi,\mathfrak{a}}({\rm{Frob}}_{\mathfrak{p}}) &\mapsto & (\bar{\rho}_{\phi,{\mathfrak{l}}_1}({\rm{Frob}}_{\mathfrak{p}})\cdot Z({\rm{GL}}_r(\mathbb{F}_{\mathfrak{l}_1})),\bar{\rho}_{\phi,{\mathfrak{l}_2}}({\rm{Frob}}_{\mathfrak{p}})\cdot Z({\rm{GL}}_r(\mathbb{F}_{\mathfrak{l}_2})))\\
&&\| \\
&&(\bar{\rho}_{\phi,{\mathfrak{l}}_1}({\rm{Frob}}_{\mathfrak{p}})\cdot Z({\rm{GL}}_r(\mathbb{F}_{\mathfrak{l}_1})),\tilde{\alpha}(\bar{\rho}_{\phi,{\mathfrak{l}}_1}({\rm{Frob}}_{\mathfrak{p}}))\cdot Z({\rm{GL}}_r(\mathbb{F}_{\mathfrak{l}_2})))\\
&&\| \\
&&(\bar{\rho}_{\phi,{\mathfrak{l}}_1}({\rm{Frob}}_{\mathfrak{p}})\cdot Z({\rm{GL}}_r(\mathbb{F}_{\mathfrak{l}_1})),g(((\bar{\rho}_{\phi,{\mathfrak{l}}_1}({\rm{Frob}}_{\mathfrak{p}}))^{\sigma})^{T})^{-1}g^{-1}\cdot Z({\rm{GL}}_r(\mathbb{F}_{\mathfrak{l}_2})))
\end{array}
$$
Therefore, $\bar{\rho}_{\phi,{\mathfrak{l}_2}}({\rm{Frob}}_{\mathfrak{p}})\cdot Z({\rm{GL}}_r(\mathbb{F}_{\mathfrak{l}_2}))$ and $g(((\bar{\rho}_{\phi,{\mathfrak{l}}_1}({\rm{Frob}}_{\mathfrak{p}}))^{\sigma})^{T})^{-1}g^{-1}\cdot Z({\rm{GL}}_r(\mathbb{F}_{\mathfrak{l}_2}))$ are the same coset in ${\rm{PGL}}_r(\mathbb{F}_{\mathfrak{l}_2})$. For each element in these two cosets, we can compute its trace. All elements in $\bar{\rho}_{\phi,{\mathfrak{l}_2}}({\rm{Frob}}_{\mathfrak{p}})\cdot Z({\rm{GL}}_r(\mathbb{F}_{\mathfrak{l}_2}))$ has nonzero trace because the characteristic polynomial of $\bar{\rho}_{\phi,{\mathfrak{l}_2}}({\rm{Frob}}_{\mathfrak{p}})$ is $x^r+x^{r-1}-\mathfrak{p}$ (mod $\mathfrak{l}_2$). However, the trace of all elements in $g(((\bar{\rho}_{\phi,{\mathfrak{l}}_1}({\rm{Frob}}_{\mathfrak{p}}))^{\sigma})^{T})^{-1}g^{-1}\cdot Z({\rm{GL}}_r(\mathbb{F}_{\mathfrak{l}_2}))$ are equal to zero because the characteristic polynomial of $(\bar{\rho}_{\phi,{\mathfrak{l}}_1}({\rm{Frob}}_{\mathfrak{p}}))^{-1}$ is $x^r-\frac{1}{\mathfrak{p}}x-\frac{1}{\mathfrak{p}}$ (mod $\mathfrak{l}_1$). Hence we have a contradiction and $\tilde{\alpha}$ must be of first type.
\end{proof}

Writing $\tilde{\alpha}(A)=\chi(A)gA^{\sigma}g^{-1}$  for all $A\in {\rm{GL}}_r(\mathbb{F}_{\mathfrak{l}_1})$, we have
$$\frac{{\rm{tr}}(\tilde{\alpha}(A))^r}{\det(\tilde{\alpha}(A))}=\sigma\left( \frac{{\rm{tr}}(A)^r}{\det(A)}            \right).$$
Therefore, for each element $(h_1,h_2)\in H$, we have
$$\frac{{\rm{tr}}(h_2)^r}{\det(h_2)}=\sigma\left( \frac{{\rm{tr}}(h_1)^r}{\det(h_1)}            \right).$$

Let $W=\{(x_1,x_2)\ |\ \sigma(x_1)=x_2\}$ be the subring of $A/\mathfrak{a}\cong\mathbb{F}_{\mathfrak{l}_1}\times \mathbb{F}_{\mathfrak{l}_2}$. We have $\mathcal{S}\subseteq W$. However, $W\neq A/\mathfrak{a}$ by counting cardinality on both sides. Thus we get a contradiction from the assumption that $N_i'$ is a proper normal subgroup of ${\rm{SL}}_r(\mathbb{F}_{\mathfrak{l}_i})$ for $i=1$ or $2$. The proof is complete.

\end{proof}

\begin{lem}\label{lem30}
Let $\mathfrak{l}_1$ and $\mathfrak{l}_2$ be two finite places of $F$. Define
$$\rho:G_F\rightarrow {\rm{GL}}_r(A_{\mathfrak{l}_1})\times {\rm{GL}}_r(A_{\mathfrak{l}_2}),\ \ \sigma\mapsto (\rho_{\phi,\mathfrak{l}_1}(\sigma),\rho_{\phi,\mathfrak{l}_2}(\sigma)).$$
Then $\rho(G_F)={\rm{GL}}_r(A_{\mathfrak{l}_1})\times {\rm{GL}}_r(A_{\mathfrak{l}_2})$.
\end{lem}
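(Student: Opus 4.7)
The plan is to show $\rho(G_F) = {\rm GL}_r(A_{\mathfrak{l}_1}) \times {\rm GL}_r(A_{\mathfrak{l}_2})$ by lifting the argument of Lemma~\ref{lem29} from the residue level to the full $\mathfrak{l}$-adic level. Set $H := \rho(G_F)$. Two observations are immediate: by the previous section, each projection $H \twoheadrightarrow {\rm GL}_r(A_{\mathfrak{l}_i})$ is surjective; and since $\det \circ \rho_{\phi,\mathfrak{l}_i} = \rho_{C,\mathfrak{l}_i}$, the combined determinant map $H \to A_{\mathfrak{l}_1}^* \times A_{\mathfrak{l}_2}^*$ coincides with the joint Carlitz representation, which is surjective by Proposition~\ref{prop1}.

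The core of the argument is to show that the commutator subgroup $[H,H]$ equals ${\rm SL}_r(A_{\mathfrak{l}_1}) \times {\rm SL}_r(A_{\mathfrak{l}_2})$; combining this with the determinant statement will then immediately give the lemma. I would apply Goursat's lemma (for profinite groups) to $[H,H] \subseteq {\rm SL}_r(A_{\mathfrak{l}_1}) \times {\rm SL}_r(A_{\mathfrak{l}_2})$. The projections of $[H,H]$ onto each factor are surjective because $H$ surjects onto each ${\rm GL}_r(A_{\mathfrak{l}_i})$ and $[{\rm GL}_r(A_{\mathfrak{l}_i}), {\rm GL}_r(A_{\mathfrak{l}_i})] = {\rm SL}_r(A_{\mathfrak{l}_i})$ by Lemma~\ref{lem28}. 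Goursat then produces closed normal subgroups $N_i \lhd {\rm SL}_r(A_{\mathfrak{l}_i})$ and an isomorphism $Q_1 \cong Q_2$, where $Q_i := {\rm SL}_r(A_{\mathfrak{l}_i})/N_i$, so the task reduces to proving $Q_i = 1$.

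To kill $Q_i$, I would combine two inputs. First, by Lemma~\ref{lem29} the image of $H$ modulo $\mathfrak{l}_1 \mathfrak{l}_2$ is ${\rm GL}_r(\mathbb{F}_{\mathfrak{l}_1}) \times {\rm GL}_r(\mathbb{F}_{\mathfrak{l}_2})$, hence the image of $[H,H]$ modulo $\mathfrak{l}_1 \mathfrak{l}_2$ is its commutator ${\rm SL}_r(\mathbb{F}_{\mathfrak{l}_1}) \times {\rm SL}_r(\mathbb{F}_{\mathfrak{l}_2})$. A layer-by-layer analysis along the congruence filtration $S^k_i = \ker({\rm SL}_r(A_{\mathfrak{l}_i}) \to {\rm SL}_r(A/\mathfrak{l}_i^k))$, using the irreducibility of $\mathfrak{sl}_r(\mathbb{F}_{\mathfrak{l}_i})$ as a ${\rm GL}_r(\mathbb{F}_{\mathfrak{l}_i})$-module employed in the proof of Lemma~\ref{lem28}, then forces the image of $N_i$ in ${\rm SL}_r(\mathbb{F}_{\mathfrak{l}_i})$ to be all of ${\rm SL}_r(\mathbb{F}_{\mathfrak{l}_i})$. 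Consequently $N_i \cdot S^1_i = {\rm SL}_r(A_{\mathfrak{l}_i})$ and $Q_i \cong S^1_i/(N_i \cap S^1_i)$ is a pro-$p$ group. But $Q_i$ is also a quotient of ${\rm SL}_r(A_{\mathfrak{l}_i})$, which equals its own commutator by Lemma~\ref{lem28}, so $Q_i$ equals its own commutator; a nontrivial pro-$p$ group has nontrivial Frattini (hence abelian) quotient and thus cannot equal its commutator, whence $Q_i = 1$.

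The main obstacle I anticipate is the middle step: extracting $N_i \cdot S^1_i = {\rm SL}_r(A_{\mathfrak{l}_i})$ from the surjectivity of $[H,H]$ modulo $\mathfrak{l}_1 \mathfrak{l}_2$, because the naive Goursat lift of an element of the form $(g,1) \in {\rm SL}_r(\mathbb{F}_{\mathfrak{l}_1}) \times {\rm SL}_r(\mathbb{F}_{\mathfrak{l}_2})$ may only produce $(x,y) \in [H,H]$ with $y \in S^1_2$ rather than $y = 1$. Handling this cleanly will likely require an inductive Goursat argument up the filtration $\mathfrak{l}_1^{n_1} \mathfrak{l}_2^{n_2}$, combined (in the symmetric case where the residue fields of $\mathfrak{l}_1$ and $\mathfrak{l}_2$ are isomorphic) with the trace/determinant dichotomy from the proof of Lemma~\ref{lem29} to rule out a residual graph isomorphism ${\rm PSL}_r(\mathbb{F}_{\mathfrak{l}_1}) \cong {\rm PSL}_r(\mathbb{F}_{\mathfrak{l}_2})$.
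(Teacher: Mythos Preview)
Your overall architecture matches the paper's: Goursat's lemma on the ${\rm SL}_r$-part, combined with the mod $\mathfrak{l}_1\mathfrak{l}_2$ surjectivity from Lemma~\ref{lem29}. But the obstacle you flag in your final paragraph is exactly where your execution diverges from the paper's, and the paper resolves it much more simply than the inductive filtration analysis you propose.

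The paper works at finite level, taking $H=\bar{\rho}_{\phi,\mathfrak{a}}(G_{F^{\rm ab}})\subseteq {\rm SL}_r(A/\mathfrak{l}_1^{n_1})\times{\rm SL}_r(A/\mathfrak{l}_2^{n_2})$ for $\mathfrak{a}=\mathfrak{l}_1^{n_1}\mathfrak{l}_2^{n_2}$. After Goursat produces $N_i$ and $Q_i$, the paper does \emph{not} try to show that $N_i$ surjects mod $\mathfrak{l}_i$. It argues in the opposite direction: if $Q_i\neq 1$ then $N_i$ is a \emph{proper} normal subgroup of ${\rm SL}_r(A/\mathfrak{l}_i^{n_i})$, and by Lemma~\ref{lem28} every such subgroup lies inside the unique maximal normal subgroup
\[
\bar{N}_i=\{B\in{\rm SL}_r(A/\mathfrak{l}_i^{n_i})\ :\ B\equiv \delta I_r \bmod \mathfrak{l}_i,\ \delta^r=1\}.
\]
Pushing the Goursat graph along ${\rm SL}_r(A/\mathfrak{l}_i^{n_i})\twoheadrightarrow {\rm PSL}_r(\mathbb{F}_{\mathfrak{l}_i})$ then forces the image of $H$ in ${\rm PSL}_r(\mathbb{F}_{\mathfrak{l}_1})\times{\rm PSL}_r(\mathbb{F}_{\mathfrak{l}_2})$ to be the graph of an isomorphism. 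But by Lemma~\ref{lem29} that image is the full product, a contradiction.

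So the twist you are missing is to bound $N_i$ from \emph{above} (inside $\bar{N}_i$) rather than from below. This comes for free from the uniqueness clause of Lemma~\ref{lem28} and completely sidesteps the lifting problem $(g,1)\rightsquigarrow (x,y)$ with $y\in S^1_2$ that you identify. Your ``$Q_i$ is pro-$p$ and perfect'' endgame is sound in principle, but the surjectivity of $N_i$ mod $\mathfrak{l}_i$ that feeds into it is not what the situation actually gives you: when $N_i$ is proper, Lemma~\ref{lem28} says its image mod $\mathfrak{l}_i$ lands in the \emph{center}, not in all of ${\rm SL}_r(\mathbb{F}_{\mathfrak{l}_i})$.
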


\begin{proof}
It's enough to show that for any positive integers $n_1$ and $n_2$, we have
$$\bar{\rho}_{\phi,\mathfrak{a}}(G_{F^{\rm{ab}}})={\rm{SL}}_r(A/\mathfrak{a})$$
where $\mathfrak{a}=\mathfrak{l}_1^{n_1}\mathfrak{l}_2^{n_2}$. 

Suppose the equality doesn't hold, then we can apply Lemma \ref{lem18} with $H=\bar{\rho}_{\phi,\mathfrak{a}}(G_{F^{\rm{ab}}})$, $B_1={\rm{SL}}_r(A/\mathfrak{l}_1^{n_1})$ and $B_2={\rm{SL}}_r(A/\mathfrak{l}_2^{n_2})$.  The image of $H$ in ${\rm{SL}}_r(A/\mathfrak{l}_1^{n_1})/N_1\times {\rm{SL}}_r(A/\mathfrak{l}_2^{n_2})/N_2$ is the graph of the isomorphism ${\rm{SL}}_r(A/\mathfrak{l}_1^{n_1})/N_1\xrightarrow{\sim} {\rm{SL}}_r(A/\mathfrak{l}_2^{n_2})/N_2$. From the second paragraph in the proof of Lemma \ref{lem29}, we know that ${\rm{SL}}_3(A/\mathfrak{l}_i^{n_i})/N_i$ are not trivial for $i=1, 2$. By Lemma \ref{lem28}, $N_i$ is a subgroup of 
$$\bar{N}:=\left\{B\in{\rm{SL}}_r(A/\mathfrak{l}_i^{n_i})| B\equiv\delta \cdot I_r \mod\mathfrak{l}, \ {\rm{where}}\ \delta\in \mathbb{F}_{\mathfrak{l}}\ {\rm{satisfies}}\ \delta^r=1              \right\}.$$ 
Taking further quotient, the image of $H$ in ${\rm{SL}}_r(A/\mathfrak{l}_1)/Z({\rm{SL}}_r(A/\mathfrak{l}_1))\times {\rm{SL}}_r(A/\mathfrak{l}_2)/Z({\rm{SL}}_r(A/\mathfrak{l}_2))$ is the graph of an isomorphism ${\rm{SL}}_r(A/\mathfrak{l}_1)/Z({\rm{SL}}_r(A/\mathfrak{l}_1))\xrightarrow{\sim} {\rm{SL}}_r(A/\mathfrak{l}_2)/Z({\rm{SL}}_r(A/\mathfrak{l}_2))$. However, Lemma \ref{lem29} shows the reduction of $H$ modulo $\mathfrak{l}_1\mathfrak{l}_2$ is the whole group ${\rm{SL}}_r(A/\mathfrak{l}_1)\times {\rm{SL}}_r(A/\mathfrak{l}_2)$. Thus the image of $H$ into ${\rm{SL}}_r(A/\mathfrak{l}_1)/Z({\rm{SL}}_r(A/\mathfrak{l}_1))\times {\rm{SL}}_r(A/\mathfrak{l}_2)/Z({\rm{SL}}_r(A/\mathfrak{l}_2))$ should be the whole group, this gives a contradiction.
\end{proof}

\subsection{Proof of the main theorem}
\begin{thmm}
Let $q=p^e$ be a prime power, $A=\mathbb{F}_q[T]$, and $F=\mathbb{F}_q(T)$. Assume $r\geqslant 3$ is a prime number and $q\equiv 1 \mod r$, there is a constant $c=c(r)\in \mathbb{N}$ depending only on $r$ such that for $p>c(r)$ the following statement is true ${\rm{:}}$

Let $\phi$ be a Drinfeld $A$-module over $F$ of rank $r$ with generic characteristic, which  is defined by $\phi_T=T+\tau^{r-1}+T^{q-1}\tau^r$. Then the adelic Galois representation 
$${\rho}_{\phi}:{\rm{Gal}}(\mathbb{F}_q(T)^{{\rm{sep}}}/\mathbb{F}_q(T))\longrightarrow \varprojlim_{\mathfrak{a}}{\rm{Aut}}(\phi[\mathfrak{a}])\cong {\rm{GL_r}}(\widehat{A})$$
 is surjective.
\end{thmm}
\begin{proof}

Firstly, $\det\circ\rho_{\phi}=\rho_{C}$ is the adelic representation of Carlitz module. We know that the adelic representation of the Carlitz module is surjective, so $\det\circ\rho_{\phi}=\widehat{A}^*$. It suffices for us to prove $\rho_{\phi}(G_{F^{\rm{ab}}})={\rm{SL}}_r(\widehat{A})$. It's equivalent to show that for every nonzero ideal $\mathfrak{a}=\mathfrak{l}_1^{n_1}\mathfrak{l}_2^{n_2}\cdots\mathfrak{l}_k^{n_k}$ of $A$ , we have $$\bar{\rho}_{\phi,\mathfrak{a}}(G_{F^{\rm{ab}}})={\rm{SL}}_r(A/\mathfrak{a})\cong \prod_{i}{\rm{SL}}_r(A/\mathfrak{l}_i^{n_i}).$$
By Lemma \ref{lem28}, we know that each ${\rm{SL}}_r(A/\mathfrak{l}_i^{n_i})$ has no nontrivial abelian quotient. Thus we can apply Lemma \ref{lem19} once we can prove that each projection
$\bar{\rho}_{\phi,\mathfrak{a}}(G_{F^{\rm{ab}}})\rightarrow {\rm{SL}}_r(A/\mathfrak{l}_i^{n_i})\times {\rm{SL}}_r(A/\mathfrak{l}_j^{n_j})$ is surjective for $1\leqslant i< j\leqslant k$. The surjectivity of each projection is proved by Lemma \ref{lem30}. Thus Lemma \ref{lem19} implies $\bar{\rho}_{\phi,\mathfrak{a}}(G_{F^{\rm{ab}}})={\rm{SL}}_r(A/\mathfrak{a})$ for every nonzero ideal $\mathfrak{a}$ of $A$, the proof of theorem is complete.

\end{proof}

\section*{Acknowledgment}

The author would like to thank his thesis advisor, Professor Mihran Papikian, for helpful discussions and suggestions on earlier drafts of this paper.

\begin{appendices}
In the Appendices, we restate Aschbacher's theorem in \cite{BHR13} and focus on the classification of subgroups in general linear group over a finite field.

\section{Notations and Aschbacher classes}
\subsection{Notation}
At here, $A,B,G$ are groups, and $a,b,n$ belong to $\mathbb{N}$. 
\begin{itemize}

\item ${\rm{GL}}_n(q)={\rm{GL}}_n(\mathbb{F}_q)$
\item $Z(G)$ denote the center of $G$.
\item $[G,G]$ or $G'$ denote the derived subgroup of $G$.
\item For $n>1$, $G^{(n)}=[G^{(n-1)},G^{(n-1)}]$. $G^{\infty}=\bigcap_{i\geqslant 0}G^{(i)}$.

\item$A.B=$ an extension of $A$ by $B$ with unspecified splittness.
\item$A\wr B=$ wreath product of $A$ by a permutation group $B$.

\item ${\rm{E}}_{p^n}$ or just $p^n$= elementary abelian group of order $p^n$.

\item For an elementary abelian group $A$, $A^{m+n}=$ a group with elementary abelian normal subgroup $A^m$ such that the quotient is isomorphic to $A^n$.
\end{itemize}

\subsection{Aschbacher classes}

Let $H$ be a subgroup of ${\rm{GL}}_n(q)$, where $n\geqslant 3$. In this subsection we give a summary of information when $H$ lies in an Aschbacher class. For a complete edition of Aschbacher classes and the definition of each class we refer to chapter 2 in \cite{BHR13}.

\begin{itemize}
\item $H$ lies in class $\mathcal{C}_1\Rightarrow$ $H$ stabilizes a proper non-zero subspace of $\mathbb{F}_q^n$.

\item $H$ lies in class $\mathcal{C}_2\Rightarrow$  there is a direct sum decomposition $\mathcal{D}$ of $\mathbb{F}_q^n$ into $t$ subspaces, each of dimension $m=n/t$: 
$$\mathcal{D}: \mathbb{F}_q^n=V_1\oplus V_2\oplus \cdots \oplus V_t,\ \ {\rm{where}}\ t\geqslant 2$$
The action of $H$ on $\mathbb{F}_q^n$ is of the type ${\rm{GL}}_m(q)\wr S_t$.

\item $H$ lies in class $\mathcal{C}_3\Rightarrow$ There is a prime divisor $s\geqslant 2$ of $n$ and $m=n/s$ such that $\mathbb{F}_q^n$ has a $\mathbb{F}_{q^s}$-vector space structure and $H$ acts $\mathbb{F}_{q^s}$-semilinear on $\mathbb{F}_q^n$. The action of $H$ on $\mathbb{F}_q^n$ is of the type ${\rm{GL}}_m(q^s)$

\item $H$ lies in class $\mathcal{C}_4\Rightarrow$ $H$ preserves a tensor product decomposition $\mathbb{F}_q^n=V_1\otimes V_2$, where $V_1$(resp. $V_2$) is a $\mathbb{F}_q$-subspace of $\mathbb{F}_q^n$ of dimension $n_1$(resp. $n_2$) and $1<n_1<\sqrt{n}$. The action of $H$ on $\mathbb{F}_q^n$ is of the type ${\rm{GL}}_{n_1}(q)\otimes {\rm{GL}}_{n_2}(q)$.

\item $H$ lies in class $\mathcal{C}_5\Rightarrow$ $H$ acts on $\mathbb{F}_q^n$ absolutely irreducible and there is a subfield $\mathbb{F}_{q_0}$ of $\mathbb{F}_q$ such that a conjugate of $H$ in ${\rm{GL}}_n(q)$ is a subgroup of $<Z({\rm{GL}}_n(q)), {\rm{GL}}_{n}(q_0)>$.

\item $H$ lies in class $\mathcal{C}_6\Rightarrow$ $n=r^m$ with $r$ prime. There is an absolutely irreducible group $E$ such that $E\trianglelefteq H \leqslant N_{ {\rm{GL}}_{n}(q)}(E)$. Here $E$ is either an extraspecial $r$-group or a $2$-group of symplectic type.
The action of $H$ on $\mathbb{F}_q^n$ is of the type ${r^{1+2m}}. {\rm{Sp}}_{2m}(r)$ when $n$ is odd. And $H$ is of the type ${2^{2+2m}}. {\rm{Sp}}_{2m}(2)$ when $n$ is even.

\item $H$ lies in class $\mathcal{C}_7\Rightarrow$ $H$ preserves a tensor induced decomposition $\mathbb{F}_q^n=V_1\otimes V_2\otimes\cdots\otimes V_t$ with $t\geqslant 2$, dim$V_i=m$ and $n=m^t$. The action of $H$ on $\mathbb{F}_q^n$ is of the type ${\rm{GL}}_m(q)\wr S_t$. Here the wreath product is a tensor wreath product, which is a quotient of standard wreath product.

\item $H$ lies in class $\mathcal{C}_8\Rightarrow$ $H$ preserves a non-degenerate classical form on $\mathbb{F}_q^n$ up to scalar multiplication. By classical form we mean symplectic form, unitary form or quadratic form.

\item $H$ lies in class $\mathcal{S} \Rightarrow$ $H$ doesn't contain ${\rm{SL}}_n(q)$ and $H^{\infty}$ acts on $\mathbb{F}_q^n$ absolutely irreducibly.

\end{itemize}

\section{Note on Aschbacher's theorem}

\begin{thm}{\rm{(Special case of Aschbacher's Theorem)}}\label{asch}

Let $H$ be a subgroup of $ {\rm{GL}}_{n}(q)$, then $H$ lies in one of the Aschbacher classes $\mathcal{C}_i$ or $\mathcal{S}$.

\end{thm}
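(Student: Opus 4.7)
The plan is to follow Aschbacher's original 1984 argument, whose engine is the dichotomy given by the generalized Fitting subgroup. Given $H \leq \mathrm{GL}_n(q)$, I would first replace $H$ by a suitable pre-image of its projective image so that $Z(\mathrm{GL}_n(q))$ plays a controlled role, and then look at $F^{*}(H) = F(H)E(H)$, where $F(H)$ is the Fitting subgroup and $E(H)$ is the layer (the central product of the quasisimple components of $H$). The crucial structural fact is that $C_H(F^*(H)) \leq F^*(H)$, so every normal subgroup of $H$ acts on $\mathbb{F}_q^n$ in a way we can interrogate via Clifford's theorem.

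The next step splits on whether $H$ acts irreducibly on $V = \mathbb{F}_q^n$. If not, $H$ lies in $\mathcal{C}_1$ and we are done. Assuming irreducibility, I apply Clifford's theorem to any normal abelian subgroup $A \trianglelefteq H$: either $A$ acts by scalars (absorbed into the centre), or the $A$-isotypic decomposition of $V$ is non-trivial and permuted by $H$. A non-trivial permutation action on equidimensional summands places $H$ in $\mathcal{C}_2$; if instead $A$ embeds $\mathbb{F}_{q^s}$ into $\mathrm{End}_{\mathbb{F}_q}(V)$ compatibly with the $H$-action, one obtains $\mathcal{C}_3$. If a normal subgroup of $H$ is an extraspecial $r$-group or a $2$-group of symplectic type whose centre acts by scalars, the Heisenberg-type structure forces $n = r^m$ and $H$ into $\mathcal{C}_6$.

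With the non-central abelian normal subgroups analysed, I turn to $E(H)$, the layer. Using Clifford theory for the components $L_1, \dots, L_t$ of $E(H)$, the action on $V$ breaks as a tensor product $V_1 \otimes \cdots \otimes V_t$ where each $V_i$ is the natural module for $L_i$; a non-trivial permutation action of $H$ on the factors together with $t \geq 2$ lands $H$ in $\mathcal{C}_7$, and a fixed tensor decomposition with trivial permutation lands it in $\mathcal{C}_4$. Passing to subfield rationality, if all traces of $H$ on $V$ lie in a proper subfield $\mathbb{F}_{q_0}\subseteq \mathbb{F}_q$, then a conjugate of $H$ lies in $\langle Z(\mathrm{GL}_n(q)), \mathrm{GL}_n(q_0)\rangle$ by a descent argument for absolutely irreducible modules, giving $\mathcal{C}_5$. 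If $H$ preserves (up to scalar) a non-degenerate bilinear, sesquilinear, or quadratic $H$-invariant form on $V$---detected by the existence of a non-zero $H$-homomorphism $V \to V^{*}$ or $V \to V^{*\sigma}$---then $H\in \mathcal{C}_8$.

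What remains is precisely the residual case: $H$ is absolutely irreducible, primitive, tensor-indecomposable, not semilinear, preserves no form, is written over no proper subfield, and normalises no extraspecial/symplectic-type $r$-group; then $F^{*}(H)/Z(F^{*}(H))$ must be simple and non-abelian, and $H^\infty$ acts absolutely irreducibly, so $H \in \mathcal{S}$. Finally, to match the statement, I would note that the entire argument is invariant under enlarging $H$ within any $\mathcal{C}_i$ or $\mathcal{S}$ class, so the classification holds for arbitrary (not necessarily maximal) subgroups as stated. The principal obstacle is not any single step but the bookkeeping: verifying that the mutually exclusive reductions above are exhaustive, which is the content of Aschbacher's \emph{On the maximal subgroups of the finite classical groups} (Invent. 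Math., 1984), and we invoke that reference for the case-by-case verifications rather than reproducing them here.
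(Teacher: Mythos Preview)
The paper does not give a proof of this theorem. It is stated in Appendix~B purely as a quoted result, with the remark that a detailed version appears as Theorem~2.2.19 in \cite{BHR13}; no argument is supplied. Your proposal, by contrast, sketches the architecture of Aschbacher's original 1984 proof via the generalised Fitting subgroup and Clifford theory, and then defers the case-by-case verifications to Aschbacher's paper. So both the paper and your proposal ultimately treat the theorem as a black box from the literature; you simply give a more informative outline of what that black box contains before citing it. There is nothing to correct, and nothing in the paper to compare against beyond the bare citation.
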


In \cite{BHR13} Theorem 2.2.19, the authors state a detailed version of Aschbacher's Theorem and describe the structure of $H\cap  {\rm{SL}}_{n}(q)$ when $H$ maximal in ${\rm{GL}}_{n}(q)$ that lies in a class $\mathcal{C}_i$ for $1\leqslant i \leqslant 8$.

\end{appendices}

\section*{Conflict of interest statement}
On behalf of the author, the author states that there is no conflict of interest.

\bibliographystyle{alpha}
\bibliography{DM_of_prime_rank_rev.3}

\end{document}